\numberwithin{equation}{section}
\numberwithin{figure}{section}
\theoremstyle{plain}
\newtheorem{thm}{\protect\theoremname}[section]
\newtheorem{cor}[thm]{\protect\corollaryname}
\theoremstyle{definition}
\newtheorem{defn}[thm]{\protect\definitionname}
\theoremstyle{plain}
\newtheorem{lem}[thm]{\protect\lemmaname}
\newtheorem{proposition}[thm]{\sc Proposition}
\newcommand{\be}{\begin{equation}}
\newcommand{\ee}{\end{equation}}
\newcommand{\fl}[1]{\lfloor{#1}\rfloor}
\newcommand{\I}{{\rm i}}
\def\eps{\varepsilon}
\def\Z{\mathbb{Z}} \def\N{\mathbb{N}}   \def\R{\mathbb{R}}
\newcommand{\midarrow}{\tikz \draw[-triangle 90] (0,0) -- +(.1,0);}
  \providecommand{\definitionname}{Definition}
  \providecommand{\lemmaname}{Lemma}
\providecommand{\theoremname}{Theorem}
\providecommand{\corollaryname}{Corollary}
\begin{document}

%\title{Strict-weak polymers}
\title{The strict-weak lattice polymer}

%\author{Ivan Corwin$^1$, Timo Seppäläinen$^2$, Hao Shen$^3$}

\author[I. Corwin]{Ivan Corwin}
\address{I. Corwin, Columbia University,
Department of Mathematics,
2990 Broadway,
New York, NY 10027, USA,
and Clay Mathematics Institute, 10 Memorial Blvd. Suite 902, Providence, RI 02903, USA,
and Institute Henri Poincare, 11 Rue Pierre et Marie Curie, 75005 Paris, France,
and Massachusetts Institute of Technology,
Department of Mathematics,
77 Massachusetts Avenue, Cambridge, MA 02139-4307, USA}
\email{ivan.corwin@gmail.com}

\author[T.Seppäläinen]{Timo Seppäläinen}
\address{T.Seppäläinen,
University of Wisconsin-Madison,
Department of Mathematics,
425 Van Vleck Hall, Madison, WI 53706-1388, USA}
\email{seppalai@math.wisc.edu}

\author[H.Shen]{Hao Shen}
\address{H.Shen,
University of Warwick,
Mathematics Department,
Coventry, CV4 7AL, UK}
\email{pkushenhao@gmail.com}

\begin{abstract}
We introduce the strict-weak polymer model,
and show the KPZ universality of the free energy fluctuation of this model for a certain range of parameters. Our proof relies on the observation that the discrete time geometric $q$-TASEP model, studied earlier by A.~Borodin and I.~Corwin, scales to this polymer model in the limit $q\to 1$. This allows us to exploit the exact results for geometric $q$-TASEP to derive a Fredholm determinant formula for the strict-weak polymer, and in turn perform rigorous asymptotic analysis to show KPZ scaling and GUE Tracy-Widom limit for the free energy fluctuations.
We also derive moments formulae for the polymer partition function directly by Bethe ansatz, and identify the limit of the free energy using a stationary version
of the polymer model.
\end{abstract}

\maketitle

\section{Introduction and results}

In this paper we introduce the  exactly solvable strict-weak polymer model on the two-dimensional square lattice, and investigate some of its  features.  This brings the number of known    exactly solvable directed lattice polymer models to two. (Subsequent to this paper, a generalization of this model called the Beta Polymer was discovered and analyzed in \cite{Barraquand2015}).
The strict-weak  model introduced here differs from the earlier studied log-gamma polymer \cite{MR3116323, COSZ, sepp-12-aop} in the definition of the admissible polymer paths.   The strict-weak model uses gamma-distributed weights on the edges (or vertices, depending on the formulation chosen) while the log-gamma polymer  uses inverse gamma weights.

 We show
that the strict-weak model belongs to the Kardar-Parisi-Zhang (KPZ) universality class by deriving the Tracy-Widom GUE limit distribution for the fluctuations of the free energy.  This result is based on the fact that, under an appropriate scaling of parameters and scaling and centering of the variables,  the geometric $q$-TASEP particle system converges to the strict-weak polymer. This  allows us to write a Fredholm determinant formula for the Laplace transform of the  strict-weak polymer partition function.

 We also derive an integral formula for the moments of the partition function via the rigorous replica method.   Finally, we show that this model has a stationary version where the ratios of nearest-neighbor pairs of partition functions are gamma-distributed. We use the stationary model to give an alternative derivation of the explicit  limiting free energy density, which also arises in the proof of the free energy fluctuations.

The Tracy-Widom limit of the strict-weak polymer model was proved independently and concurrently by O'Connell and Ortmann   \cite{OConOrth}.  They  derived the Fredholm determinant formula (our Theorem \ref{thm:Fredholm-SWpolymer}) in a different way that   complements our work.  They use previous work of \cite{COSZ} on the geometric RSK correspondence to relate the strict-weak polymer to a particular Whittaker process. Then, using an identity from \cite{COSZ} and a variant of an argument from \cite{MR3116323}, they arrive at the result of Theorem \ref{thm:Fredholm-SWpolymer}.

 \medskip

We turn to the definition of the model and the main results.
Our convention is to define the model
on a two dimensional $t-n$ lattice. The variable $t$ represents
  discrete time, with the axis pointing to the right. The variable
$n$ is a  discrete space variable, with the axis
pointing upward.

Recall that a nonnegative random variable $X$ has Gamma distribution with shape
parameter $k>0$ and scale parameter $\theta>0$, and write $X\sim\mbox{Gamma}(k,\theta)$,
if
\[
\mathbb{P}(X\in dx)=\frac{1}{\Gamma(k)\theta^{k}}x^{k-1}e^{-x/\theta}\, dx \;.
\]
The Laplace transform of a Gamma distributed variable $X$ is given
by
\begin{equation}
\mathbb{E}[e^{tX}]=(1-\theta t)^{-k}\quad(t<1/\theta)\label{eq:Laplace-Gamma}
\end{equation}
When $k=1$, the Gamma distribution specializes to the exponential
distribution.

\begin{defn} \label{def:SWpoly}
A strict-weak polymer path $\pi$ is a lattice path which at each lattice
site $(t,n)$ is allowed to
\begin{itemize}
\item Jump horizontally to the right from $(t,n)$ to $(t+1,n)$;
\item Or, jump diagonally to the upright from $(t,n)$ to $(t+1,n+1)$.
\end{itemize}
The partition function with parameters $k,\theta>0$ for the
ensemble of strict-weak polymers from $(0,1)$ to $(t,n)$ is given
by
\[
Z(t,n)=\sum_{\pi:(0,1)\to(t,n)}\prod_{e\in\pi}d_{e}
\]
where the product is over all the horizontal and diagonal unit segments
in the path $\pi$, and
\begin{itemize}
\item $d_{e}=1$ if $e$ is a diagonal unit segment;
\item $d_{e}$ is an independent $\mbox{Gamma}(k,\theta)$ distributed random variable
if $e$ is a horizontal unit segment.
\end{itemize}
The free energy of the strict-weak polymer model is
  $\log Z(t,n)$.
\end{defn}

\begin{center}
\begin{tikzpicture}
\draw[step=0.5,gray,thin] (0,0) grid (5.5,3);
\draw [->] (0,0) -- (6,0);
\draw [->] (0,0) -- (0,3.5);
\node at (6.4,0) {$t$};
\node at (0,3.9) {$n$};
\filldraw (5.5,3) circle (0.1cm) ;
\node at (5.9,3.5) {$(t,n)$};
\draw [very thick] (0,0.5) -- (0.5,0.5) -- (1,1) -- (1.5,1) -- (2,1) -- (2.5,1.5) -- (3,1.5)-- (3.5,2)-- (4,2.5)-- (4.5,2.5)-- (5,2.5)-- (5.5,3);
%\filldraw (4,1) circle (0.04cm) ;
%\node at (4.23,1) {$d_e$};
\node at (-0.2,-0.2) {$0$};
\draw (7,2.5) -- (5.8,1.5) -- (6,2);
\draw [->] (6,2) -- (4.2,1);
\node at (7.2,2.8) {Gamma$(k,\theta)$};
\end{tikzpicture}
\end{center}

The partition functions of   the strict-weak polymer system satisfy
the   recursive relation
\begin{equation}
Z(t+1,n)=Y(t,n)\, Z(t,n)+Z(t,n-1)\label{eq:Recursive-Z}
\end{equation}
where $Y(t,n)$ are i.i.d.\ Gamma random variables. This relation can
be easily derived by observing that
\[
\sum_{\pi:(0,1)\to(t+1,n)}\prod_{e\in\pi}d_{e}=\sum_{\pi:(0,1)\to(t,n-1)}\prod_{e\in\pi}d_{e}\,+\, d_{f}\cdot\!\!\!\!\sum_{\pi:(0,1)\to(t,n)}\prod_{e\in\pi}d_{e}
\]
where $f$ is the horizontal edge from $(t,n)$ to $(t+1,n)$, and
therefore from the definition $d_{f}\sim\mbox{Gamma}(k,\theta)$.

The requirement that the polymer paths all start from $(0,1)$ means that
we consider the delta initial data
\begin{equation} \label{eq:initial-Z}
Z(0,n)=\mathbf{1}_{n=1} \;.
\end{equation}
Furthermore, for any point $(t,1)$ with $t\geq 0$, there is only one admissible
polymer (the straight path) from $(0,1)$ to $(t,1)$, and the total weight it collects
is the product of $t$ i.i.d.\ $\mbox{Gamma}(k,\theta)$ random variables, namely
\begin{equation} \label{eq:boundary-Z}
Z(t,1)=\prod_{s=0}^{t-1} \,d_{((s,1),(s+1,1))} \;. %Y(s,1) \;, 
%\qquad\qquad Y(s)\sim\mbox{Gamma}(k,\theta)\mbox{   i.i.d.}\;.
\end{equation}
The recursive relation \eqref{eq:Recursive-Z}, the initial condition \eqref{eq:initial-Z},
and the boundary condition \eqref{eq:boundary-Z} together determine the
partition function $Z(t,n)$ for any $t>0$ and $n>1$.
As an example,  one can see easily either from the definition or from this recursive
relation that, %for instance, $Z(1,1)\sim\mbox{Gamma}(k,\theta)$,
$Z(2,2)$ is a sum of two i.i.d $\mbox{Gamma}(k,\theta)$ random
variables, which by the property of the Gamma distribution implies that
$Z(2,2)\sim\mbox{Gamma}(2k,\theta)$.

Our main result of this paper is the KPZ universality for the strict-weak
polymer model, for sufficiently large $\kappa$ where $t=\kappa n$.
The largeness of $\kappa$ seems to be only a technical requirement to simplify the asymptotic analysis. %We also assume $\theta=1$ for simplicity.
\begin{defn} \label{def:parameters}
Recall the digamma function $\Psi(x) := \big[\log\Gamma]'(x)$. Given parameters $k>0$ and $\kappa\geq 1$ such that there exists a unique solution $\bar t \in (0,1/2)$ to the equation
\[
\Psi'(\bar{t})-\kappa\Psi'(k+\bar{t})=0  \;,
\]
we define numbers
\[
%\bar{f}_{k,\kappa}=-\Psi(\bar{t})+\kappa\Psi(k+\bar{t}) \;,
\bar{f}_{k,\theta,\kappa}=-\Psi(\bar{t})+\kappa\Psi(k+\bar{t}) +(\kappa-1)\log\theta\;,
\qquad
\bar{g}_{k,\kappa}=-\Psi''(\bar t)+\kappa\Psi''(k+\bar t) \;.
\]
\end{defn}
Lemma~\ref{lem:exists-cri} ensures that if $\kappa$ is sufficiently large, the solution $\bar t \in (0,1/2)$ exists and is unique.
Note that though $\bar{f}_{k,\theta,\kappa}$ depends on $\theta$,
$\bar{g}_{k,\kappa}$ does not depend on $\theta$ (as the notation indicates).

\begin{thm}
\label{thm:main}
There exists $\kappa^{*}=\kappa^{*}(k)>0$
such that the strict-weak polymer free energy with parameters $k,\theta>0$
%$\theta=1$, 
and $\kappa>\kappa^{*}$ has limiting fluctuation distribution
given by
\[
\lim_{n\to\infty}\mathbb{P}\bigg(
	\frac{\log Z(\kappa n,n)  %-n\bar{f}_{k,\kappa}}
			-n\bar{f}_{k,\theta,\kappa}}
	{n^{1/3}}
\le r
\bigg)
=F_{GUE}\Big(\big(\frac{\bar{g}_{k,\kappa}}{2}\big)^{-1/3}\, r\Big)
\]
where %$\bar{f}_{k,\kappa}$ 
$\bar{f}_{k,\theta,\kappa}$ 
and $\bar{g}_{k,\kappa}$ are defined in
Definition~\ref{def:parameters},
and $F_{GUE}$ is the GUE Tracy-Widom distribution function.
\end{thm}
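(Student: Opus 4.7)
The plan is to follow the standard road map for KPZ asymptotics in an exactly solvable model. The first ingredient, already advertised in the introduction as Theorem~\ref{thm:Fredholm-SWpolymer}, is a Fredholm determinant formula for the Laplace transform $\mathbb{E}[\exp(-u Z(\kappa n,n))]$. This formula is obtained by starting from the known Fredholm determinant for the $q$-Laplace transform of a marginal of geometric $q$-TASEP \`a la Borodin--Corwin, and taking the $q\to 1$ limit in tandem with the scaling of the particle system parameters that makes the particle positions converge to the strict-weak polymer free energies. The $q$-Pochhammer becomes the exponential, the $q$-gamma becomes the gamma, and after a suitable deformation of contours one obtains a contour integral kernel in which the exponentially large part has the form $n\,G(w)$ for an explicit function $G$ depending on $k$, $\theta$, $\kappa$.

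Given this formula, the theorem reduces to a rigorous steepest descent analysis. First I would choose the parameter $u$ so that $e^{-uZ}$ acts as a smoothed indicator at the $n^{1/3}$ fluctuation scale: set
\[
u = u_n(r) := \exp\!\Big(-n\bar f_{k,\theta,\kappa} - n^{1/3}\bigl(\tfrac{\bar g_{k,\kappa}}{2}\bigr)^{1/3} r\Big).
\]
A standard soft-argument (as in Borodin--Corwin, Proposition 4.1.39 of \emph{Macdonald processes}) then converts pointwise convergence of $\mathbb{E}[e^{-u_n Z(\kappa n,n)}]$ to a limit $\Phi(r)$ into weak convergence of $(\log Z(\kappa n,n)-n\bar f_{k,\theta,\kappa})/n^{1/3}$ to the distribution $\Phi$, modulo a mild tightness / tail bound on $\log Z$.

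The bulk of the work is the steepest descent analysis itself. The double critical point of the phase $G$ is located by the equations $G'(\bar t)=0$ and $G''(\bar t)=0$, which, with the chosen form of $G$, become exactly the defining equation $\Psi'(\bar t) - \kappa\Psi'(k+\bar t)=0$ of Definition~\ref{def:parameters}; existence and uniqueness of $\bar t\in(0,1/2)$ for large $\kappa$ is the content of Lemma~\ref{lem:exists-cri}. The centering constant $\bar f_{k,\theta,\kappa}$ is then the value of the phase at $\bar t$ (which justifies the specific shift in $u_n$), and the scale factor $\bar g_{k,\kappa}$ is the coefficient of the cubic term. After zooming in via $w = \bar t + n^{-1/3}(\bar g_{k,\kappa}/2)^{-1/3}\zeta$, the integrand in each variable converges to the Airy kernel integrand, and the Fredholm determinant converges to $F_{\rm GUE}\bigl((\bar g_{k,\kappa}/2)^{-1/3} r\bigr)$, as desired.

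The main obstacle, and the reason for the restriction $\kappa>\kappa^*$, is the \emph{global} contour analysis. One must deform the $w$- and $z$-contours through $\bar t$ so that $\operatorname{Re}(G(w)-G(z))$ has the correct sign and decays sufficiently fast away from the critical point, while simultaneously avoiding the poles of the various gamma functions and of the $\pi/\sin(\pi(z-w))$-type denominator. These constraints become increasingly delicate as $\kappa$ decreases, because the critical point $\bar t$ migrates and the level lines of $\operatorname{Re}\,G$ change topology; the condition $\kappa>\kappa^*$ is imposed precisely to guarantee that one can find admissible contours along which uniform-in-$n$ decay estimates can be proved, allowing dominated convergence to pass the pointwise Airy kernel convergence to the level of Fredholm determinants.
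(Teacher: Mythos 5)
Your proposal follows essentially the same road map as the paper's proof: fix $u = \exp(-n\bar f_{k,\theta,\kappa} - r n^{1/3})$ (up to the harmless choice of whether to absorb the $(\bar g_{k,\kappa}/2)^{1/3}$ constant into the shift or into the argument of $F_{\rm GUE}$), invoke the Laplace-transform-to-distribution lemma from Borodin--Corwin, rewrite the kernel in terms of the phase $G(z)=\log\Gamma(z)-\kappa\log\Gamma(k+z)+(\bar f_{k,\theta,\kappa}-(\kappa-1)\log\theta)z$ with double critical point $\bar t$, and carry out steepest descent along deformed contours through $\bar t$. Your identification of why $\kappa>\kappa^*$ is needed also matches the paper: large $\kappa$ makes $\bar t$ small so that the steepest-descent contours $\mathcal C_v$, $\mathcal C_{\tilde z}$ can be chosen to avoid the poles while providing the uniform decay estimates (the content of Lemmas~\ref{lem:asymCv} and~\ref{lem:asym-Cz}) that justify localization to the Airy kernel.
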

The proof is given in Section \ref{sec:Asymptotic-analysis}.
Besides describing the fluctuations of the free energy, this theorem also proves that (in the parameter range considered) %$\bar f_{k,\kappa}$ 
$\bar f_{k,\theta,\kappa}$  represents the free energy law of large numbers.
In Section~\ref{sec:free-energy} we provide a different means (applicable for all parameter choices) to identify the free energy law of large numbers as
\[
\bar{f}_{k,\theta,\kappa}
= \inf_{\beta>0} \Big(-\Psi(\beta)+\kappa\Psi(k+\beta) +(\kappa-1)\log\theta \Big) \;.
\]
Though this appears different than the earlier expression for %$\bar{f}_{k,\kappa}$ 
$\bar{f}_{k,\theta,\kappa}$ in Definition~\ref{def:parameters}, it is readily confirmed that they are, in fact, the same.

The main observation behind the above theorem is a
connection between the strict-weak polymer and the discrete time geometric
$q$-TASEP introduced and studied in \cite{discrete-time}. Under suitable centering and scaling, the fluctuations of geometric $q$-TASEP particle positions converge weakly to the strict-weak polymer
free energies, as $q\to 1$.

Recall that the $N$-particle discrete time geometric $q$-TASEP with
jump parameter $\alpha\in(0,1)$ is an interacting particle system
with particle locations on $\Z$ labeled by
\[
X_{N}(t)< \dotsm<X_{2}(t)<X_{1}(t).
\]
In discrete time $t\in\mathbb{Z}_{\geq0}$, particles jump according
to the parallel update rule:
\[
\mathbb{P}\left(X_{n}(t+1)=X_{n}(t)+j\;\Big|\;\mbox{gap}_{n}(t)=m\right)=\mathbf{p}_{\alpha}(j\;|\; m).
\]
Here $\mbox{gap}_{n}(t):=X_{n-1}(t)-X_{n}(t)-1$ for $i>1$, and
$\mbox{gap}_{1}(t):=\infty$. The jump rates are given by
\begin{equation}
\begin{aligned}\mathbf{p}_{\alpha}(j\;|\; m) & =\alpha^{j}(\alpha;q)_{m-j}\frac{(q;q)_{m}}{(q;q)_{m-j}(q;q)_{j}},\\
\mathbf{p}_{\alpha}(j\;|\;\infty) & =\alpha^{j}(\alpha;q)_{\infty}\frac{1}{(q;q)_{j}},
\end{aligned}
\label{eq:p_alpha}
\end{equation}
where the $q$-Pochhammer symbols are defined as
\[
(a;q)_{m}=\prod_{i=0}^{m-1}(1-aq^{i})\qquad(a;q)_{\infty}=\prod_{i=0}^{\infty}(1-aq^{i}) \;.
\]
We will consider step initial condition, where, for $n\geq 1$,
\begin{equation}
X_{n}(t=0)=-n.\label{eq:step-initial}
\end{equation}

We study a particular scaling limit of the fluctuations of $X_{n}(t)$, namely the
function $F^{\eps}(t,n)$ defined via
\begin{equation}
X_{n}(t)+n=\theta^{-1}\left[ (t-(n-1))\, \eps^{-1}\log\eps^{-1}-\eps^{-1}F^{\eps}(t,n)\right]\label{eq:defFeps}
\end{equation}
under the scaling where
\begin{equation}
\alpha=e^{-m_{1}\eps},\qquad q=e^{-\theta\eps}\label{eq:scaling}.
\end{equation}

There are two ways (we know of) to motivate this scaling. The first, which is most in line with the approach we pursue herein, is that under this scaling one readily sees that the moment formulas for geometric $q$-TASEP converge to those of the strict-weak polymer (cf. the end of Section \ref{sec:Replica}). The second motivation requires a little more explanation, which we briefly describe here. Macdonald processes \cite{MR3152785} are measures on interlacing partitions which enjoy a number of exact formulas owing to the integrable structure of the Macdonald symmetric functions. A special case (corresponding to setting the Macdonald $t$ parameter to zero) yields $q$-Whittaker processes. There exist Markovian dynamics on these interlacing partitions which preserves the class of $q$-Whittaker processes, leading to a deterministic evolution on the parameters describing the fixed time marginals of the dynamics. In \cite{MR3152785} a continuous time dynamic related to the so-called Plancherel specialization is introduced, and continuous time $q$-TASEP arises as a marginal on the smallest parts of the partitions. As $q\to 1$, \cite{MR3152785} shows that the Plancherel specialized $q$-Whittaker process converges to the Plancherel Whittaker process of \cite{OCon} and $q$-TASEP converges to the free energy evolution for the O'Connell-Yor semi-discrete directed polymer. The pure alpha specialization of the $q$-Whittaker process is likewise preserved by discrete time Markov dynamics \cite{MatPet} and has discrete time geometric $q$-TASEP as its marginal on the smallest parts. The pure alpha specialized $q$-Whittaker process converges \cite{MR3152785,BCFV} (under scaling related to those above) to the alpha specialized Whittaker process of \cite{COSZ}. As explained in \cite{OConOrth}, the analog of the smallest part for the pure alpha Whittaker process is related to the strict-weak polymer free energy. Methods coming from Whittaker processes \cite{COSZ} provide a route to write down a Laplace transform formula for the strict-weak polymer partition function which can be turned (using identities similar to those of \cite{MR3116323}) into the Fredholm determinant formula present herein. This is the approach taken in \cite{OConOrth}. We do not rely upon the connection to these Macdonald/$q$-Whittaker/Whittaker processes in the approach we utilize here, though certainly this was an important motivation in our pursuit.

The following result demonstrates that the limit as $\eps\to 0$ of $e^{F^{\eps}(t,n)}$ satisfies the same recursive relation as $Z(t,n)$ where the parameter $k$ is related to $m_1$ via $k=m_{1}/\theta$. The proof is given in Section \ref{sec:Recursive-relation}, though it is also briefly sketched below.
\begin{thm}
\label{thm:rec-rel}
For $t\geq0$ and $n\geq1$, the sequence
of random variables $F^{\eps}(t,n)$ converge weakly to a limit as
$\eps\to0$, denoted as $F(t,n)$, and one has the recursive relation
\[
e^{F(t+1,n)}=Y(t,n)\, e^{F(t,n)}+e^{F(t,n-1)}
\]
for every $t\geq0$ and $n\geq1$, where $Y(t,n)$ are i.i.d.\ Gamma
distributed random variables with\textup{ }shape parameter $k=m_{1}/\theta$
and scale parameter $\theta$.
\end{thm}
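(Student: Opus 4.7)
Under $q = e^{-\theta\eps}$, the definition \eqref{eq:defFeps} becomes the algebraic identity $e^{F^\eps(t,n)} = \eps^{-(t-n+1)}\,q^{X_n(t)+n}$. The one-step geometric $q$-TASEP update $X_n(t+1) = X_n(t) + J_n(t)$ with $J_n(t)\in\{0,\ldots,\mathrm{gap}_n(t)\}$ immediately gives $e^{F^\eps(t+1,n)}/e^{F^\eps(t,n)} = \eps^{-1}q^{J_n(t)}$, while subtracting the analogous identity at $(t,n-1)$ yields $e^{F^\eps(t,n-1)}/e^{F^\eps(t,n)} = \eps^{-1}q^{\mathrm{gap}_n(t)}$, with the convention $q^\infty := 0$ for $n=1$. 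Combining these produces the \emph{exact pre-limit recursion}
\[
e^{F^\eps(t+1,n)} = Y^\eps(t,n)\,e^{F^\eps(t,n)} + e^{F^\eps(t,n-1)},\qquad
Y^\eps(t,n) := \eps^{-1}\bigl(q^{J_n(t)} - q^{\mathrm{gap}_n(t)}\bigr) \geq 0,
\]
which holds for every $\eps>0$ and already has exactly the shape of the target recursion. The theorem thus reduces to showing that jointly in $n$, and independently of the history $\mathcal F_t$, the $Y^\eps(t,n)$ converge weakly to i.i.d.~$\mathrm{Gamma}(k,\theta)$ variables with $k=m_1/\theta$.

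I would proceed by induction on $t$. The base case $t=0$ is immediate: $X_n(0)=-n$ gives $e^{F^\eps(0,1)}=1$ and $e^{F^\eps(0,n)}=\eps^{n-1}\to 0$ for $n\geq 2$, matching $Z(0,\cdot)=\mathbf{1}_{\cdot=1}$. For the inductive step, the parallel-update rule makes the jumps $\{J_n(t)\}_n$ conditionally independent given $\mathcal F_t$ with laws $\mathbf p_\alpha(\cdot\,|\,\mathrm{gap}_n(t))$, so joint convergence across $n$ follows automatically from the marginal \emph{core claim}: for any integer sequence $m^\eps$ with $\eps^{-1}q^{m^\eps}\to u\in[0,\infty)$ and $J\sim\mathbf p_\alpha(\cdot\,|\,m^\eps)$,
\[
\eps^{-1}(q^J - q^{m^\eps}) \xrightarrow{d} \mathrm{Gamma}(k,\theta).
\]
(The $n=1$ case corresponds to $m^\eps\equiv\infty$, $u=0$.) Pushing the inductive hypothesis through the pre-limit recursion by Slutsky then gives the desired recursion at time $t+1$.

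The core claim is the main obstacle, and I would attack it by the method of moments. In the infinite-gap case the $q$-exponential identity $\sum_j z^j/(q;q)_j = 1/(z;q)_\infty$ gives the closed form $\mathbb E[q^{cJ}\mid\infty] = (\alpha;q)_c$. Under our scaling, $\alpha = e^{-m_1\eps} = q^{m_1/\theta} = q^k$, and the expansion $1-q^{k+i}=(k+i)\theta\eps+O(\eps^2)$ immediately yields
\[
\mathbb E\bigl[(\eps^{-1}q^J)^c\bigm|\infty\bigr] = \eps^{-c}(q^k;q)_c \longrightarrow \theta^c\,\Gamma(k+c)/\Gamma(k)\qquad(\eps\to 0),
\]
which is precisely the $c$-th moment of $\mathrm{Gamma}(k,\theta)$; moment-determinacy of the Gamma then upgrades this to convergence in distribution. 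For the finite-gap case with $u>0$, the analogous moment identity can be extracted either by applying a $q$-Chu--Vandermonde summation to $\sum_j \alpha^j q^{cj}(\alpha;q)_{m-j}\binom{m}{j}_q$, or structurally by invoking the joint $q$-moment formula of \cite{discrete-time} and passing $q\to 1$ (cf.\ the remark at the end of Section~\ref{sec:Replica}); both routes yield $\mathbb E[(\eps^{-1}q^J)^c\mid m^\eps]\to\mathbb E[(u+Y)^c]$ with $Y\sim\mathrm{Gamma}(k,\theta)$, which, since $\eps^{-1}q^{m^\eps}\to u$ deterministically, gives the required $\eps^{-1}(q^J-q^{m^\eps})\xrightarrow{d}\mathrm{Gamma}(k,\theta)$ and closes the induction.
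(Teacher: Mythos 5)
Your proof takes a genuinely different --- and in places cleaner --- route than the paper's, and is correct in outline. The paper computes the conditional probability density of $e^{F^\eps(t,n)}$ given the time-$(t-1)$ values, unwinds the jump-rate formula \eqref{eq:p_alpha}, and shows the $(\eps\theta)$-rescaled density converges pointwise to the $\mathrm{Gamma}(k,\theta)$ density, invoking the $q$-Pochhammer asymptotics of \cite[Corollary 4.1.10]{MR3152785} at several places and keeping track of normalizations. You instead isolate the \emph{exact} pre-limit identity $e^{F^\eps(t+1,n)}=Y^\eps(t,n)\,e^{F^\eps(t,n)}+e^{F^\eps(t,n-1)}$ with $Y^\eps(t,n)=\eps^{-1}(q^{J_n(t)}-q^{\mathrm{gap}_n(t)})$ --- a nice structural observation that the paper never states explicitly --- and then prove the Gamma limit for $Y^\eps$ by the method of moments. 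In the infinite-gap case, your calculation $\eps^{-c}\mathbb E[q^{cJ}\mid\infty]=\eps^{-c}(q^k;q)_c\to\theta^c\Gamma(k+c)/\Gamma(k)$ is elementary and bypasses the Pochhammer asymptotics entirely; combined with moment-determinacy of the Gamma law, this cleanly recovers the content of Lemma~\ref{lem:first-particle}. Two steps remain to make this a complete proof. First, the finite-gap moment limit $\mathbb E[(\eps^{-1}q^J)^c\mid m^\eps]\to\mathbb E[(u+Y)^c]$ is asserted via a $q$-Chu--Vandermonde evaluation or by citing the $q$-moment formula of \cite{discrete-time}, but neither route is actually carried out: the sum $\sum_j\alpha^jq^{cj}(\alpha;q)_{m-j}\,(q;q)_m/\big((q;q)_j(q;q)_{m-j}\big)$ does not obviously collapse, and you would also need to verify that the moments stay bounded as $\eps\to 0$ (the contribution from small $J$ could in principle blow up). Second, the passage from the ``core claim'' --- stated for a \emph{deterministic} integer sequence $m^\eps$ --- to the inductive step needs an argument, since $\mathrm{gap}_n(t)$ is random and $\eps$-dependent; the usual remedy is a Skorokhod coupling making the time-$t$ convergence almost sure, applying the core claim pathwise, and closing with dominated convergence on the conditional characteristic function. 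The paper treats this second point with comparable brevity (``follow the same argument as in the proof of Lemma~\ref{lem:first-particle}''), so it is a presentational rather than a substantive gap relative to the paper, but your writeup should flag it.
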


Thus we see that $e^{F(t,n)}$ satisfies the same recursive relation as the polymer partition
function (\ref{eq:Recursive-Z}). When $t=0$, by step initial condition (\ref{eq:step-initial}),
we have $F^{\eps}(0,n)=(1-n)\log\eps^{-1}$, therefore $e^{F(0,n)}=\lim_{\eps\to0}e^{(1-n)\log\eps^{-1}}= \mathbf{1}_{n=1}$,
which coincides with the initial condition (\ref{eq:initial-Z}) for the polymer partition function. Also, one can show that (see Lemma~\ref{lem:first-particle})
$e^{F^\eps(1,1)}$ converges to a Gamma $(k,\theta)$ random variable. Since the first particle jumps independently at each step, $e^{F^\eps(t,1)}$ converges to a product of $t$ of i.i.d. Gamma$(k,\theta)$ random variables, so it also coincides with the boundary condition \eqref{eq:boundary-Z}.

Therefore, as a consequence of the above theorem, we obtain the convergence of the fluctuation of the geometric $q$-TASEP to the polymer free energy. In fact, the convergence of the process, or joint convergence, follows readily from the above theorem and the independence of each jump.  The independence of jumps implies independence of the random variables $Y_\eps(t,n):=(e^{F^\eps(t,n)}-e^{F^\eps(t-1,n-1)})/e^{F^\eps(t-1,n)}$, as well as independence of their limits $Y(t,n)$. Since the recursive relation is linear in these $Y_\eps(t,n)$ or $Y(t,n)$ random variables, each of the variables $e^{F^\eps(t,n)}$ or $e^{F(t,n)}=Z(t,n)$ can be written as a sum of products of different $Y_\eps$'s or $Y$'s. Consequently, weak convergence of  $\{Y_\eps(t,n)\}_{t\geq 0,n>0} \to \{Y(t,n)\}_{t\geq 0,n>0}$  implies that of the process $\{e^{F^\eps(t,n)}\}_{t\geq 0,n>0} \to \{Z(t,n)\}_{t\geq 0,n>0}$ (as can be seen, for instance, from considering characteristic functions). Summarizing, we have the following result.
\begin{cor}\label{cor:conv}
As $\eps\to0$, the   processes  $\{e^{F^\eps(t,n)}\}_{t\geq 0,n>0}$ converge  in  distribution to  the process  $\{Z(t,n)\}_{t\geq 0,n>0}$ of strict-weak polymer partition functions.
\end{cor}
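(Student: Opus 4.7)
\emph{Strategy.} The plan is to promote the one-point weak convergence $e^{F^\eps(t,n)}\Rightarrow Z(t,n)$ supplied by Theorem~\ref{thm:rec-rel} to joint convergence of the full process by exploiting the algebraic structure of the recursion. Both $e^{F^\eps}$ and $Z$ arise by iterating the linear two-term recursion (\ref{eq:Recursive-Z}) from matching initial and boundary data, driven by a doubly-indexed family of random weights. Once I show that these driving weights converge jointly to an i.i.d.\ Gamma family, the continuous mapping theorem transfers this convergence to the processes themselves.

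\emph{Driving weights and their joint convergence.} Following the sketch preceding the statement, I define
\[
Y_\eps(t,n) := \frac{e^{F^\eps(t,n)}-e^{F^\eps(t-1,n-1)}}{e^{F^\eps(t-1,n)}} \qquad (t\ge 1,\ n\ge 2),
\]
with an analogous definition for $n=1$ using the first-particle contribution; by construction $e^{F^\eps(t,n)}=Y_\eps(t,n)\,e^{F^\eps(t-1,n)}+e^{F^\eps(t-1,n-1)}$ holds tautologically. A direct computation from (\ref{eq:defFeps}) yields $Y_\eps(t,n)=\eps^{-1}\bigl(e^{-\theta\eps J_n(t)}-e^{-\theta\eps\,\mathrm{gap}_n(t-1)}\bigr)$ with $J_n(t):=X_n(t)-X_n(t-1)$. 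Theorem~\ref{thm:rec-rel} together with Lemma~\ref{lem:first-particle} gives the marginal convergence $Y_\eps(t,n)\Rightarrow Y(t,n)\sim\mathrm{Gamma}(k,\theta)$. For the joint convergence I would invoke the parallel-update structure of geometric $q$-TASEP: conditional on the configuration $X(t-1)$, the jumps $\{J_n(t)\}_n$ are mutually independent, and update randomness across different time steps is independent. Combined with marginal convergence and a characteristic-function argument as indicated in the paragraph preceding the corollary, this yields joint weak convergence of any finite subfamily of $\{Y_\eps(t,n)\}$ to the product of Gamma$(k,\theta)$ laws.

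\emph{Continuous mapping and main obstacle.} Iterating the tautological recursion expresses $e^{F^\eps(t,n)}$, for each fixed $(t,n)$, as a fixed polynomial $P_{t,n}$ in the finite collection $\{Y_\eps(s,m):1\le s\le t,\,1\le m\le n\}$ and the initial data (\ref{eq:initial-Z}). The same polynomial $P_{t,n}$ applied to $\{Y(s,m)\}$ produces $Z(t,n)$ by (\ref{eq:Recursive-Z})--(\ref{eq:boundary-Z}), because $Z$ satisfies the identical recursion with matching initial and boundary conditions. The continuous mapping theorem then upgrades the joint convergence of the driving weights to joint convergence of $(e^{F^\eps(t,n)})_{(t,n)\in S}$ to $(Z(t,n))_{(t,n)\in S}$ for every finite $S$, which is the process-level convergence claimed. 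The main obstacle I expect is the joint-convergence step: in the prelimit, $Y_\eps(t,n)$ depends both on the jump $J_n(t)$ and on $\mathrm{gap}_n(t-1)$, and the latter quantity couples different sites of the $Y_\eps$-array through the common history $X(t-1)$. Establishing asymptotic independence therefore requires showing that under the scaling (\ref{eq:scaling}) this coupling vanishes---morally, that the gaps become macroscopic and the conditional law of the normalized jump converges to the Gamma law uniformly in the gap---which is somewhat more delicate than the one-point statement of Theorem~\ref{thm:rec-rel} itself, even though no new integrable input is needed.
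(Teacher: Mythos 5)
Your proposal follows essentially the same route as the paper: isolate the driving weights $Y_\eps(t,n)$, show they converge jointly to an i.i.d.\ Gamma array, and push this through the linear recursion to get process-level convergence. The obstacle you flag at the end is the substantive point, so let me address it directly. The paper's wording, ``the independence of jumps implies independence of the random variables $Y_\eps(t,n)$,'' is in fact too strong as a statement about the prelimit. Writing $Y_\eps(t,n)=\eps^{-1}\bigl(q^{J_n(t)}-q^{\mathrm{gap}_n(t-1)}\bigr)$ with $q=e^{-\theta\eps}$, one sees that conditionally on $\mathrm{gap}_n(t-1)=m$ the variable takes values in $\{\eps^{-1}(q^j-q^m):0\le j\le m\}$, a set that visibly depends on $m$; since $\mathrm{gap}_n(t-1)$ is a function of the earlier $Y_\eps$'s, the prelimit weights are genuinely dependent. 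So your scepticism is well founded.

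What actually carries the argument is exactly what you sketch: the jumps $\{J_n(t)\}_n$ are conditionally independent given $X(t-1)$ (this is built into the parallel update), and the computation in the proof of Theorem~\ref{thm:rec-rel} establishes that the conditional law of $Y_\eps(t,n)$ given the conditioning values $(u,v)$ converges to the Gamma$(k,\theta)$ law, and crucially that this limit does not depend on $(u,v)$. Iterating this conditional convergence across time steps and combining it with the product structure across $n$ within each step is what upgrades the one-step statement to joint convergence of the finite-dimensional distributions of $\{Y_\eps(t,n)\}$; the continuous mapping step then runs exactly as you describe. You do not carry out that iteration, but the paper itself passes over it by asserting prelimit independence, so your diagnosis of where the real work lies is accurate and the overall strategy is sound.
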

%\begin{cor}\label{cor:conv}
%The sequence of random variables $\{e^{F^\eps(t,n)}\}_{t\geq 0,n>0}$ converges weakly (in terms of finite dimensional distributions) to  $\{Z(t,n)\}_{t\geq 0,n>0}$.
%\end{cor}

Given this convergence result, we can apply the exact formula for the $e_q$-Laplace transform of the particle location fluctuations of the geometric
$q$-TASEP to obtain an exact formula for the strict-weak polymer. The following Fredholm determinant formula for the geometric $q$-TASEP is from \cite[Theorem~2.4]{discrete-time}.
\begin{thm}
For every $\zeta\in\mathbb{C}\backslash\mathbb{R}_{+}$,
\begin{equation}
\mathbb{E}\bigg[\frac{1}{(\zeta q^{X_{n}(t)+n};q)_{\infty}}\bigg]=\det(I+K_{\zeta})_{L^{2}(C_{1})}\label{eq:q-Laplace}
\end{equation}
where  $C_{1}$ is a small positively oriented circle containing $1$ and
$K_{\zeta}:L^{2}(C_{1})\to L^{2}(C_{1})$   is given by its integral kernel
\[
K_{\zeta}(w,w')=\frac{1}{2\pi\I}\int_{-\I\infty+1/2}^{\I\infty+1/2}\frac{\pi}{\sin(-\pi s)}(-\zeta)^{s}\frac{(q^{s}w;q)_{\infty}^{n}}{(w;q)_{\infty}^{n}}\frac{(\alpha w;q)_{\infty}^{t}}{(\alpha q^{s}w;q)_{\infty}^{t}}\frac{1}{q^{s}w-w'}ds \;.
\]
\end{thm}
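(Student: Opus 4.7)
The plan follows the Macdonald process / $q$-moment approach of Borodin and Corwin. The proof decomposes into three steps: (i) derive nested contour integral formulas for the $q$-moments $\mathbb{E}\big[q^{k(X_n(t)+n)}\big]$; (ii) expand the $q$-Laplace transform on the left-hand side as a series in these $q$-moments via the $q$-binomial theorem; (iii) resum this moment series into the Fredholm determinant on the right-hand side via a Mellin-Barnes representation.

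For step (i), two complementary routes are available. The direct probabilistic route is to establish a duality identity: the observable $u(t;\vec n) := \mathbb{E}\big[\prod_{i=1}^k q^{X_{n_i}(t)+n_i}\big]$ satisfies a closed linear discrete evolution equation in $\vec n$ with explicit step initial data. One then verifies by residue calculus that a $k$-fold nested contour integral of the form
$$u(t;\vec n) = \oint\cdots\oint \prod_{1\le i<j\le k}\frac{z_i-z_j}{z_i-qz_j}\prod_{i=1}^k G(z_i;t,n_i)\,dz_1\cdots dz_k,$$
with $G$ an explicit single-variable function involving $(\alpha z_i;q)_\infty$ and $(z_i;q)_\infty$, and with contours nested so that the $z_i$-contour encloses $qz_j$ for $i<j$, solves the duality equation with the correct initial condition. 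Alternatively, since discrete time geometric $q$-TASEP is the smallest-parts marginal of the pure-$\alpha$ specialized $q$-Whittaker process, the moments arise from iterated application of the first Macdonald difference operator on a Cauchy-type identity for $q$-Whittaker functions.

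For step (ii), the $q$-binomial theorem gives
$$\frac{1}{(\zeta y;q)_\infty} = \sum_{k\ge 0}\frac{\zeta^k}{(q;q)_k}\,y^k,$$
so specializing to $y=q^{X_n(t)+n}\in(0,1]$ and taking expectations termwise (legal for $|\zeta|$ small, by a bound on the growth of the $q$-moments) converts the $q$-Laplace transform into an absolutely convergent series in the $q$-moments. For step (iii), substitute the Mellin-Barnes contour representation for $\zeta^k/(q;q)_k$ involving $\pi(-\zeta)^s/\sin(-\pi s)$, deform the nested $z$-contours down to the single small circle $C_1$ around $1$, and reorganize. The symmetrization kernel combined with the Mellin-Barnes factors reassembles, via a Cauchy-type determinant identity, into the $k\times k$ determinant $\det\big[K_\zeta(w_i,w_j)\big]$, producing the Fredholm expansion $\det(I+K_\zeta)_{L^2(C_1)}$ with the kernel of the statement.

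The main obstacle is step (iii). Deforming the $z_i$-contours from their nested configuration down onto $C_1$ necessarily crosses poles coming from $(z_i;q)_\infty^{-n}$ at $z_i=q^{-j}$ and from the $z_i-qz_j$ factors, and the combinatorial heart of the argument is showing that all residues swept out cancel in the symmetrized sum, leaving precisely the $\det[K_\zeta]$ structure. One must also justify the interchange of the $k$-sum with the Mellin-Barnes integrals; because the $q$-moments grow rapidly in $k$, the naive $q$-Laplace series has only a finite radius of convergence in $\zeta$, and the identity for all $\zeta\in\mathbb{C}\setminus\mathbb{R}_+$ must be obtained by analytic continuation, using that both sides are analytic on this domain (the right-hand side because $K_\zeta$ depends analytically on $\zeta$ and defines a trace-class operator on $L^2(C_1)$).
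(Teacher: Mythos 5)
The paper does not prove this statement at all: it is quoted verbatim from \cite[Theorem~2.4]{discrete-time} and used as a black box. So there is no in-paper proof for your attempt to be compared against. What you have written is a sketch of the Borodin--Corwin methodology that \emph{does} underlie the cited result, and at that level of resolution your outline is the right shape: (i) $q$-moment duality / nested-contour integral formula (this paper records the outcome as \eqref{eq:geo-moments}--\eqref{eq:momthmeq}, again citing \cite{discrete-time}), (ii) $q$-binomial expansion of the $e_q$-Laplace transform, (iii) contour deformation, residue reorganization and a Mellin--Barnes representation to assemble the Fredholm determinant, followed by analytic continuation in $\zeta$.

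That said, as a \emph{proof} the proposal has real gaps beyond the one you flag. The residue bookkeeping in step (iii) is not a detail one can wave at: deforming the nested $z_i$-contours to a common small circle around $1$ produces a sum indexed by \emph{partitions} $\lambda\vdash k$, with combinatorial multiplicities, and it is only after one correctly groups these residues (\`a la Proposition 3.2.1 of the Macdonald processes paper) that a ``small-contour'' Fredholm determinant appears; the Mellin--Barnes step then converts that small-contour kernel into the $s$-integral kernel $K_\zeta$ of the statement. Your outline applies Mellin--Barnes \emph{before} the contour deformation and treats the reorganization as a ``Cauchy-type determinant identity,'' which elides the genuinely hard residue cancellation argument. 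Also, you assert but do not verify that the nested-contour formula actually solves the true (not free) evolution equation for the $q$-moment observable with the correct boundary conditions at $n_i=n_{i+1}$ --- for this model that verification requires the specific form of the geometric $q$-TASEP jump kernel $\mathbf{p}_\alpha$, and without it step (i) is only a template. Finally, the analytic continuation argument at the end needs the trace-class property of $K_\zeta$ uniformly on compacts of $\mathbb{C}\setminus\mathbb{R}_+$, which is again nontrivial and unaddressed. None of these are wrong ideas, but they are the parts that constitute the actual proof in \cite{discrete-time}, and your write-up leaves them as placeholders.
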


From the above formula, we take the $q\to1$ limit according to the
scaling (\ref{eq:defFeps}) and (\ref{eq:scaling}) and obtain the following Fredholm determinant
formula for strict-weak polymers; the proof of the following formula
is given in Section \ref{sec:Fredholm-SWpolymer}.
\begin{thm}
\label{thm:Fredholm-SWpolymer}For $u\in\mathbb{C}$ such that $Re(u)>0$,
let $t=\kappa n$ for parameter $\kappa\geq1$. Then one has
\[
\mathbb{E}\Big[e^{-uZ(\kappa n,n)}\Big]=\det(I+K_{u})_{L^{2}(C_{0})}
\]
where $C_{0}$ is a small positively
oriented circle containing $0$ and
$K_{u}:L^{2}(C_{0})\to L^{2}(C_{0})$ has kernel
\[
K_{u}(v,v')=
\frac{1}{2\pi\I}\int_{-\I\infty+1/2}^{\I\infty+1/2}
\frac{\pi}{\sin(\pi(v-\tilde{z}))}\Big(\frac{\Gamma(v)/\Gamma(k+v)^{\kappa}}{\Gamma(\tilde{z})/\Gamma(k+\tilde{z})^{\kappa}}\Big)^{n}
\frac{u^{\tilde{z}-v}\theta^{(\kappa n-(n-1))(\tilde{z}-v)}}{\tilde{z}-v'} d\tilde{z} \;.
\]
\end{thm}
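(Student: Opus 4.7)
The plan is to take the $q\to 1$ limit of the $q$-TASEP Fredholm determinant formula (\ref{eq:q-Laplace}) under the scaling (\ref{eq:defFeps})--(\ref{eq:scaling}), relying on Corollary~\ref{cor:conv} for the convergence of the random variables appearing on the left-hand side. The relation $k=m_1/\theta$ and $q=e^{-\theta\eps}$ gives $\alpha=e^{-m_1\eps}=q^k$, which is what ultimately produces the Gamma functions with shifts by $k$ in the target kernel $K_u$. Choosing
\[
\zeta=\zeta_\eps:=-u\theta\,\eps^{n-t},
\]
a direct computation using (\ref{eq:defFeps}) yields
$\zeta_\eps q^{X_n(t)+n}=-u\theta\eps\,e^{F^\eps(t,n)}$.
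Since $(q;q)_n\sim(\theta\eps)^n n!$ as $\eps\to 0$, expanding
$1/(y;q)_\infty=\sum_j y^j/(q;q)_j$ termwise with $y=\zeta_\eps q^{X_n(t)+n}$ gives the pointwise limit $e^{-u Z(\kappa n,n)}$. A uniform integrability argument (all terms in the series are signed-real and dominated by a convergent expansion of $e^{|u|Z}$, and $Z$ has finite exponential moments since it is bounded by a product/sum of Gamma variables) together with Corollary~\ref{cor:conv} promotes this to the desired convergence
$\mathbb{E}[1/(\zeta_\eps q^{X_n(t)+n};q)_\infty]\to\mathbb{E}[e^{-uZ(\kappa n,n)}]$.

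For the right-hand side I would change variables in the kernel by setting $w=q^v$, $w'=q^{v'}$, and $\tilde z=s+v$, so that $\alpha w=q^{k+v}$ and $\alpha q^s w=q^{k+\tilde z}$. Under $w=e^{-\theta\eps v}$, the small contour $C_1$ around $w=1$ pulls back (with preserved orientation) to a small contour $C_0$ around $v=0$, and the vertical $s$-contour transforms into the vertical $\tilde z$-contour at $\mathrm{Re}(\tilde z)=1/2+\mathrm{Re}(v)$, which for $v$ close to $0$ is essentially $1/2+\I\R$. The $q$-Gamma asymptotic
\[
(q^a;q)_\infty=(q;q)_\infty\,(1-q)^{1-a}/\Gamma_q(a),\qquad \Gamma_q(a)\to\Gamma(a),
\]
applied to each of the four $q$-Pochhammer symbols gives, after cancelation of the $(q;q)_\infty$ factors,
\[
\frac{(q^s w;q)_\infty^n}{(w;q)_\infty^n}\frac{(\alpha w;q)_\infty^t}{(\alpha q^s w;q)_\infty^t}
\;\sim\;(\theta\eps)^{(t-n)(\tilde z-v)}\left(\frac{\Gamma(v)/\Gamma(k+v)^\kappa}{\Gamma(\tilde z)/\Gamma(k+\tilde z)^\kappa}\right)^{\!n}.
\]
The surviving $\eps$-powers combine with $(-\zeta_\eps)^s=u^s\theta^s\eps^{-(t-n)s}$ and with $s=\tilde z-v$ to give exactly $u^{\tilde z-v}\theta^{(t-(n-1))(\tilde z-v)}$. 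Meanwhile $\pi/\sin(-\pi s)=\pi/\sin(\pi(v-\tilde z))$, and the combination $dw/(q^s w-w')$ produces $dv/(\tilde z-v')$ as the two factors of $-\theta\eps$ cancel. Thus the kernel converges pointwise to $K_u(v,v')$.

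To upgrade pointwise convergence of the kernel to convergence of the Fredholm determinant, I would use the Neumann/Hadamard expansion
\[
\det(I+K_{\zeta_\eps})_{L^2(C_1)}=\sum_{m\ge 0}\frac{1}{m!}\int_{C_1^m}\det[K_{\zeta_\eps}(w_i,w_j)]_{i,j=1}^m\prod_i\frac{dw_i}{2\pi\I},
\]
bound the summand by Hadamard's inequality, and dominate termwise. The key estimates are: (i) uniform-in-$\eps$ control of the $q$-Pochhammer ratios on the contours, using the $q$-Gamma asymptotics and the fact that $v,v'$ stay in a small neighborhood of $0$ where $\Gamma(v)$ has a single simple pole that can be avoided by choosing $C_0$ with the appropriate radius; and (ii) exponential decay of $\pi/\sin(\pi(\tilde z-v))$ for $|\mathrm{Im}(\tilde z)|\to\infty$ together with polynomial bounds on the Gamma ratios along vertical lines (Stirling), which ensure the inner $\tilde z$-integral is absolutely convergent uniformly in $\eps$. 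The contour $C_0$ must be chosen small enough that $\mathrm{Re}(v)<1/2$, so that the pole of $\sin(\pi(\tilde z-v))$ at $\tilde z=v$ lies strictly to the left of the $\tilde z$-contour and is avoided throughout the limit.

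The main obstacle will be precisely this uniform control step: the kernel $K_{\zeta_\eps}$ contains factors that individually blow up or vanish at different rates in $\eps$, and only after the explicit cancelations identified above does one obtain the clean limit. Controlling the error terms in the $q$-Gamma asymptotics uniformly over the (non-compact) $\tilde z$-contour, while keeping the $v$-contour at a radius independent of $\eps$, is what makes the dominated-convergence argument delicate. Once these estimates are in place, combining the convergence of both sides yields $\mathbb{E}[e^{-uZ(\kappa n,n)}]=\det(I+K_u)_{L^2(C_0)}$ for $\mathrm{Re}(u)>0$, as claimed.
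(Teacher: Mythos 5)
Your overall strategy matches the paper's proof: scale $\zeta=-u\theta\eps^{n-t}$, show the left side converges to $\mathbb{E}[e^{-uZ}]$, change variables $w=q^v$, $w'=q^{v'}$ in the kernel and use $q$-Gamma asymptotics for pointwise convergence, then control the Fredholm expansion via exponential decay of $\pi/\sin(\pi(\tilde z-v))$ together with Hadamard's inequality. That is essentially what Section~\ref{sec:Fredholm-SWpolymer} does.

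One genuine error, though, is in your justification of convergence on the left side. You claim the terms are dominated by an expansion of $e^{|u|Z}$ and that ``$Z$ has finite exponential moments since it is bounded by a product/sum of Gamma variables.'' That is false: a single $\mbox{Gamma}(k,\theta)$ variable already has $\mathbb{E}[e^{sX}]=(1-\theta s)^{-k}<\infty$ only for $s<1/\theta$, and a product of two or more independent Gamma variables (which appears in $Z(\kappa n,n)$ once $\kappa n\ge 2$, cf. \eqref{eq:boundary-Z}) has a sub-exponential tail $\sim e^{-c\sqrt t}$, so $\mathbb{E}[e^{sZ}]=\infty$ for every $s>0$. The correct (and much easier) argument, implicit in the paper's citation of the Borodin--Corwin lemma, uses $Re(u)>0$: the argument $\zeta_\eps q^{X_n(t)+n}$ has non-positive real part, so $|1/(\zeta_\eps q^{X_n(t)+n};q)_\infty|\le 1$ uniformly in $\eps$, and bounded convergence (plus uniform convergence $e_q\to e$ and Corollary~\ref{cor:conv}) suffices; no moment estimate on $Z$ is needed or available. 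You should also make explicit where the hypothesis $\kappa\ge 1$ enters: it is what makes $\Gamma_q(k+s+v)^{\kappa-1}$ uniformly bounded on the vertical $s$-contour (the key step in the paper's Lemma~\ref{lem:tail-hq}), without which your ``polynomial bounds by Stirling'' claim on the non-compact contour does not go through.
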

We use this Fredholm determinant formula to prove Theorem \ref{thm:main}.

We remark that there is a zero-temperature limit of our model as $k\to 0$ previously studied 
in \cite{Neil1999directed}. In fact as $k\to 0$, the family of random variables
$-k\log d_e$ converge to a family of independent exponential   random variables, and the model converges weakly to a directed first passage percolation model
(i.e. a problem of minimizing the total   weights along paths).

%\medskip

\subsection{Outline}
Section \ref{sec:Recursive-relation} contains the proof of Theorem \ref{thm:rec-rel}. In Section \ref{sec:Fredholm-SWpolymer} we prove Theorem \ref{thm:Fredholm-SWpolymer}. In Section
\ref{sec:Asymptotic-analysis} we carry out rigorous asymptotic analysis based on the formula in Theorem \ref{thm:Fredholm-SWpolymer} and prove Theorem \ref{thm:main}. In Section \ref{sec:Replica} we
apply the replica method to derive moments formula of the polymer partition function. Finally in Section~\ref{sec:stat} we introduce a stationary version
of the polymer model and in Section~\ref{sec:free-energy} we identify the free energy law of large numbers using this stationary model.

\subsection{Acknowledgements}
I. Corwin was partially supported by the NSF grant DMS-1208998 as well as by Microsoft Research and MIT through the Schramm Memorial Fellowship, by the Clay Mathematics Institute through the Clay Research Fellowship and by the Institut Henri Poincar\'e through the Poincar\'e Chair. H. Shen would like to thank Prof. Martin Hairer for his support on a visit to MSRI in July 2014 where part of this work was done. T.~Sepp\"al\"ainen was partially supported by  NSF grant DMS-1306777 and by the Wisconsin Alumni Research Foundation.

\section{Recursive relation: Proof of Theorem \ref{thm:rec-rel}\label{sec:Recursive-relation}}

The proof of Theorem \ref{thm:rec-rel} follows from the definition of the discrete time geometric $q$-TASEP and certain known limits of $q$-deformed functions. We will first demonstrate the limit of the fluctuation of the first particle.

\begin{lem} \label{lem:first-particle}
The sequence of random variables $\exp(F^{\eps}(1,1))$ converge as $\eps\to0$ to a Gamma
distributed random variable with shape parameter $k=m_{1}/\theta$ and scale parameter $\theta$.
\end{lem}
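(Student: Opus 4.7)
The plan is to reduce the claim to a short $q$-calculus identity followed by a classical $q\to 1$ limit. By the scaling (\ref{eq:defFeps}) at $t=n=1$, using $q = e^{-\theta\eps}$, one has
\[
\exp(F^\eps(1,1)) = \eps^{-1} q^{X_1(1)+1}.
\]
The step initial condition (\ref{eq:step-initial}) gives $X_1(0) = -1$, so $J := X_1(1)+1$ is distributed according to $\mathbf{p}_\alpha(\,\cdot\,|\infty)$ from (\ref{eq:p_alpha}). Writing $Y_\eps := \eps^{-1} q^J$, my task is to show $Y_\eps$ converges weakly to $\mathrm{Gamma}(k,\theta)$ with $k = m_1/\theta$.

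I would proceed via the method of moments, computing $\mathbb{E}[Y_\eps^s]$ for each positive integer $s$. The $q$-exponential identity $\sum_{j\geq 0} z^j/(q;q)_j = 1/(z;q)_\infty$ yields
\[
\mathbb{E}\bigl[q^{sJ}\bigr] = (\alpha;q)_\infty \sum_{j\geq 0}\frac{(\alpha q^s)^j}{(q;q)_j} = \frac{(\alpha;q)_\infty}{(\alpha q^s;q)_\infty}.
\]
Since $\alpha = e^{-m_1\eps} = q^{k}$, this equals $(q^k;q)_\infty/(q^{k+s};q)_\infty$. Rewriting via the $q$-Gamma function $\Gamma_q(x) := (q;q)_\infty (1-q)^{1-x}/(q^x;q)_\infty$ gives
\[
\mathbb{E}[Y_\eps^s] = \eps^{-s}(1-q)^s\,\frac{\Gamma_q(k+s)}{\Gamma_q(k)}.
\]

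To conclude, I send $\eps\to 0$. Since $(1-q)/\eps = (1-e^{-\theta\eps})/\eps \to \theta$ and $\Gamma_q(x) \to \Gamma(x)$ as $q\to 1$ (a classical fact about the $q$-Gamma function), $\mathbb{E}[Y_\eps^s]$ converges to $\theta^s\,\Gamma(k+s)/\Gamma(k)$, which is precisely the $s$-th moment of $\mathrm{Gamma}(k,\theta)$ as read off from (\ref{eq:Laplace-Gamma}). Because the Gamma distribution is moment-determinate (its moments satisfy Carleman's condition), convergence of all positive integer moments implies weak convergence. The only substantive step is invoking the classical $q\to 1$ limit of $\Gamma_q$; the rest is an elementary computation, so I do not anticipate a real obstacle.
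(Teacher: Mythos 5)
Your proof is correct and takes a genuinely different route from the paper's. The paper computes the probability mass function of $e^{F^\eps(1,1)}$ at each admissible discrete value $r$ using $\mathbf{p}_\alpha(\cdot\mid\infty)$, shows (via \cite[Cor.~4.1.10]{MR3152785}) that the rescaled pmf converges pointwise to the Gamma$(k,\theta)$ density, and then upgrades this local convergence of pmfs to weak convergence through an approximation of tail probabilities by Riemann sums. You instead observe the clean algebraic identity $e^{F^\eps(1,1)} = \eps^{-1} q^{X_1(1)+1}$, compute the $s$-th moment in closed form via Euler's $q$-exponential sum and $\alpha=q^k$, obtaining $\eps^{-s}(1-q)^s\,\Gamma_q(k+s)/\Gamma_q(k)$, and then let $q\to1$ to recover $\theta^s\Gamma(k+s)/\Gamma(k)$. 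This is shorter and bypasses the pmf-to-density conversion; the price is invoking moment-determinacy (Fr\'echet--Shohat with Carleman's condition for the Gamma law), which the paper's approach never needs. Both arguments ultimately rest on the same asymptotics of $q$-Pochhammer symbols; you route them through $\Gamma_q\to\Gamma$ while the paper uses a direct estimate on $(e^{-\eps};e^{-\eps})_{\eps^{-1}\log\eps^{-1}+\eps^{-1}y}$. One small point worth making explicit in your write-up: since $X_1$ only jumps forward from the step initial condition, $J=X_1(1)+1\geq 0$ and $q^J\leq 1$, so $Y_\eps\leq\eps^{-1}$ is bounded for each $\eps$; thus all moments are finite and the interchange of expectation and the geometric-type sum is justified by nonnegativity, so the method of moments applies without further qualification.
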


\begin{proof}
By the definition \eqref{eq:defFeps} of the
quantity $F^{\eps}(1,1)$,
for any positive real number $r$, one has $e^{F^{\eps}(1,1)} =r$
if and only if
\be \label{eq:int-int}
X_{1}(1)+1=\theta^{-1}\left[\eps^{-1}\log\eps^{-1}-\eps^{-1}\log r\right] \;.
\ee
Since the left side above is always a non-negative integer,
$F^{\eps}(1,1)$ can only take values $r$ in a discrete set
such that the right side above is also a non-negative integer,
namely $\log r\in \log\eps^{-1} -\eps\theta \,\Z_+$.
%the number of  $r\in[r_0,r_1)$ such that the right side above is also
%an integer is equal to $(\theta\eps)^{-1} \log\frac{r_1}{r_0}$.
For every such $r$, by the definition
of the discrete time geometric $q$-TASEP,
%$\mathbb{P}\Big(  e^{F^{\eps}(1,1)}\in[r_0,r_1]\Big)$
%is equal to the Jacobian arising from transformation $F^\eps(1,1) \to X_1(1)$
%which is $(\theta\eps r)^{-1}$, multiplied by
\begin{equation}
\begin{aligned}
\mathbb{P}\Big(X_{1}(1) & +1  =\theta^{-1}\left[\eps^{-1}\log\eps^{-1}-\eps^{-1}\log r\right]\Big)\\
 & =\mathbf{p}_{\alpha}(\theta^{-1}\left[\eps^{-1}\log\eps^{-1}-\eps^{-1}\log r\right]\;|\;\infty)\\
 & =e^{-\eps m_{1}\theta^{-1}\left[\eps^{-1}\log\eps^{-1}-\eps^{-1}\log r\right]}\frac{(e^{-\eps m_{1}};e^{-\eps\theta})_{\infty}}{(e^{-\eps\theta};e^{-\eps\theta})_{\theta^{-1}\left[\eps^{-1}\log\eps^{-1}-\eps^{-1}\log r\right]}}
 \;,
\end{aligned}
\label{eq:ProbF11}
\end{equation}
where $\mathbf{p}_{\alpha}$
is defined in \eqref{eq:p_alpha}.
The exponential factor
\[
e^{-\eps m_{1}\theta^{-1}\left[\eps^{-1}\log\eps^{-1}-\eps^{-1}\log r\right]}
=
\eps^{m_{1}/\theta}r^{m_{1}/\theta}\;.
\]
By \cite[Corollary 4.1.10]{MR3152785}, if we define
\[
f(y,\eps)=(e^{-\eps},e^{-\eps})_{\eps^{-1}\log\eps^{-1}+\eps^{-1}y} \;,
\]
then for any $\delta>0$, there exists $\eps_{0}>0$ such that if
$\eps<\eps_{0}$ one has
\be \label{eq:log-f}
\log\, f(y,\eps)-\mathcal{A}(\eps)-e^{-y}\in[-\delta,\delta]
\ee
where $\mathcal{A}(\eps)$ is an $\eps$ dependent constant (whose value
is not important since in our case it will cancel out).
Note that in our case,
\[
(e^{-\eps\theta};e^{-\eps\theta})_{\theta^{-1}\left[\eps^{-1}\log\eps^{-1}-\eps^{-1}\log r\right]}=f(-\log r-\log\theta^{-1};\,\eps\theta) \;.
\]
Therefore for any $\delta>0$, if $\eps$ is sufficiently small
\begin{equation}
(e^{-\eps\theta};e^{-\eps\theta})_{\theta^{-1}\left[\eps^{-1}\log\eps^{-1}-\eps^{-1}\log r\right]}\cdot e^{-\mathcal{A}(\eps\theta)-r/\theta}\in[1-\delta,1+\delta]\label{eq:theta-theta-r} \;.
\end{equation}

As for the $r$-independent factor in the numerator (which will be
a normalization factor), by the definition of $q$-Gamma function
\be \label{eq:q-gamma}
\Gamma_{q}(x):=\frac{(q;q)_{\infty}}{(q^{x};q)_{\infty}}(1-q)^{1-x} \;,
\ee
we can take  $x=m_{1}/\theta$, so that
\begin{equation} \label{eq:m1-theta-inf}
(e^{-\eps m_{1}};e^{-\eps\theta})_{\infty}
=\frac{(e^{-\eps\theta};e^{-\eps\theta})_{\infty}}
	{\Gamma_{e^{-\eps\theta}}(m_{1}/\theta)}
(1-e^{-\eps\theta})^{1-m_{1}/\theta} \;.
\end{equation}
And one has
\[
(e^{-\eps\theta};e^{-\eps\theta})_{\infty}=f(\infty;\,\eps\theta)
\]
and therefore for $\eps$ sufficiently small
\[
(e^{-\eps\theta};e^{-\eps\theta})_{\infty}\cdot e^{-\mathcal{A}(\eps\theta)}\in[1-\delta,1+\delta] \;.
\]

Substitute (\ref{eq:theta-theta-r}) and (\ref{eq:m1-theta-inf})
into (\ref{eq:ProbF11}), and we obtain that %for any $\delta>0$,
the quantity %$\mathbb{P}\Big(e^{F^{\eps}(1,1)}\in[r_0,r_1)\Big)$
\eqref{eq:ProbF11}
 is arbitrarily close to
\[
%\sum_{r}
%\Big( (\eps\theta)^{-1} \log\frac{r_1}{r_0} \Big)
\eps^{m_{1}/\theta}r^{m_{1}/\theta}\frac{(1-e^{-\eps\theta})^{1-m_{1}/\theta}}{\Gamma_{e^{-\eps\theta}}(m_{1}/\theta)}e^{-r/\theta}
%=: r\theta \cdot p_\eps(r)
\]
for $\eps$ sufficiently small.
%where the sum is over  $r\in[r_0,r_1)$ such that the right side of \eqref{eq:int-int}
%is integer.

In general, if $F^\eps(1,1)$ is a random variable valued in $a_\eps + \eps\theta \Z$,
where $a_\eps$ is an $\eps$ dependent shift,
and for any $s\in\R$, one has $(\eps\theta)^{-1}\mathbb P(F_\eps(1,1)=\hat s)\to f(s)$ as
$\eps\to 0$ where $\hat s =\max\{s'\le s| s'\in a_\eps + \eps\theta \Z\}$,
then $F^\eps(1,1)$ converges to a limit $F$ as $\eps\to 0$ weakly and $F$
takes value in the continuum and has $f$ as its density function.
This can be proved, for instance, via approximating $\mathbb P(F_\eps(1,1)>t)$
by $\int_t^{\infty} (\theta\eps)^{-1} \mathbb P(F_\eps(1,1)=\hat s)\,ds$
up to a small error which goes to $0$ as $\eps\to 0$.
This integral converges to $\int_t^\infty f(s)\,ds$
by point-wise convergence and applying Fatou's lemma on both $[t,\infty)$
and $(-\infty,t]$, and the fact that a density function integrates to $1$
over $(-\infty,\infty)$.

In our case, note that $(1-e^{-\eps\theta})/(\eps\theta)\to1$
as $\eps\to0$, and that $k=m_1/\theta$.
Therefore  for any positive real number $r$, letting $s=\log r$,
\[
(\eps\theta)^{-1}\mathbb P(F_\eps(1,1)=\hat s)  \to
r \cdot \frac{r^{k-1}}{\theta^{k} \Gamma(k)}e^{-r/\theta} =:f(s)
\qquad (\mbox{ as }\eps \to 0,\mbox{  where }r=e^s).
\]
So $F^\eps(1,1)$ converges to a limiting random variable $F$
and its density function $\mathbb P(F\in[s,s+ds))$
is equal to $f(s)\,ds$ with $f$ defined above.
Since $ds=\frac{1}{r}dr$, one concludes that
$e^{F^\eps(1,1)}$ converges weakly to $e^{F(1,1)}$
which is a Gamma$(k,\theta)$ distributed random variable.
%
%taking $r_1=r_0+dr$ such that the number of terms in the above sum
%$(\theta\eps)^{-1} \log\frac{r_1}{r_0}
%=(\theta\eps r_0)^{-1} dr$
%one has (not quite right...)
%\[
%\mathbb{P}\Big(e^{F^{\eps}(1,1)}\in[r_0,r_0+dr)\Big)
%\to\frac{r_0^{k-1}}{\theta^k \Gamma(k)}e^{-r_0/\theta} dr
%\qquad
%(\eps\to 0)
%\]
%which is the Gamma distribution.
\end{proof}

Since the geometric $q$-TASEP is defined
in terms of the probability of the distance that the $n$-th particle jumps forward from time $t-1$ to time $t$, given the gap between
the $n$-th particle and the $(n-1)$-st particle at time $t-1$, it
is natural to consider the distribution of $F^{\eps}(t,n)$ given the
values of $F^{\eps}(t-1,n)$ and $F^{\eps}(t-1,n-1)$. This motivates
the following proof.
\begin{proof}[Proof of Theorem~\ref{thm:rec-rel}]
We compute the probability that $e^{F^{\eps}(t,n)}=r$ conditioned
on $e^{F^{\eps}(t-1,n)}=v$ and $e^{F^{\eps}(t-1,n-1)}=u$.
%In this computation we will make an approximation and drop the shifts by $n$ of $X_n(t)$. This introduces an order $\epsilon$ correction which is inconsequential in the limit $\eps\to 0$. With this approximation,
Observe that we seek to study the probability that
\[
X_{n}(t) + n =\theta^{-1}\left[(t-(n-1)) \,\eps^{-1}\log\eps^{-1}-\eps^{-1}\log r\right]
\]
conditioned on
\[
X_{n-1}(t-1) + (n-1) =\theta^{-1}\left[(t-(n-1)) \,\eps^{-1}\log\eps^{-1}-\eps^{-1}\log u\right] \;,
\]
\[
X_{n}(t-1) + n =\theta^{-1}\left[(t-n)\,\eps^{-1}\log\eps^{-1}-\eps^{-1}\log v\right] \;,
\]
where $r$, $u$ and $v$ take discrete values  such that the right hand sides
of the above identities are integers.
This means that at time $t-1$, the gap between the $(n-1)$-st and
$n$-th particle is given by
\[
X_{n-1}(t-1)-X_{n}(t-1)
=\theta^{-1}\left[\eps^{-1}\log\eps^{-1}-\eps^{-1}\log(u/v)\right] + 1
\]
and one asks for the probability that the $n$-th particle jumps by
the distance
\begin{equation}
X_{n}(t)-X_{n}(t-1)=\theta^{-1}\left[\eps^{-1}\log\eps^{-1}-\eps^{-1}\log(r/v)\right]\label{eq:jump-rv}
\end{equation}
Therefore, the conditional probability
\[
\mathbb{P}\Big(e^{F^{\eps}(t,n)}=r\,\Big|\, e^{F^{\eps}(t-1,n-1)}=u,\, e^{F^{\eps}(t-1,n)}=v\Big)
\]
is equal to %the Jacobian $v/(\theta\eps r)$ arising from the transformation
%(\ref{eq:jump-rv}) multiplied by the following function
(the jump rates for q-TASEP $\mathbf{p}_{\alpha}$
is defined in (\ref{eq:p_alpha})):
\begin{equation}
\begin{aligned} & \mathbf{p}_{\alpha}\Big(
	\theta^{-1}\left[\eps^{-1}\log\eps^{-1}-\eps^{-1}\log\frac{r}{v}\right]\,\Big|\,\theta^{-1}\left[\eps^{-1}\log\eps^{-1}-\eps^{-1}\log\frac{u}{v}\right] +1
	\Big)\\
 & =e^{-\eps m_{1}\cdot\theta^{-1}\left[\eps^{-1}\log\eps^{-1}-\eps^{-1}\log\frac{r}{v}\right]}\,
 (e^{-\eps m_{1}};e^{-\eps\theta})_{\theta^{-1}\left[\eps^{-1}\log\frac{r}{v}-\eps^{-1}\log\frac{u}{v}\right] +1}\\
 & \quad\times
 \frac{(e^{-\eps\theta};e^{-\eps\theta})_{\theta^{-1}\left[\eps^{-1}\log\eps^{-1}-\eps^{-1}\log(u/v)\right] +1}}{(e^{-\eps\theta};e^{-\eps\theta})_{\theta^{-1}\left[\eps^{-1}\log\frac{r}{v}-\eps^{-1}\log\frac{u}{v}\right] +1}(e^{-\eps\theta};e^{-\eps\theta})_{\theta^{-1}\left[\eps^{-1}\log\eps^{-1}-\eps^{-1}\log\frac{r}{v}\right]}}
\end{aligned}
\label{eq:ruv}
\end{equation}
For the exponential factor, one has (recall that $k=m_{1}/\theta$)
\[
\lim_{\eps\to0}\,\eps^{-k}e^{-\eps m_{1}\cdot\theta^{-1}\left[\eps^{-1}\log\eps^{-1}-\eps^{-1}\log(r/v)\right]}=(r/v)^{k} \;.
\]

As in the proof of Lemma~\ref{lem:first-particle},
By \cite[Corollary 4.1.10]{MR3152785}, if we define
\[
f(y,\eps)=(e^{-\eps};e^{-\eps})_{[\eps^{-1}\log\eps^{-1}+\eps^{-1}y]} \;,
\]
then for any $\delta>0$, if
$\eps$ is sufficiently small then one has \eqref{eq:log-f}.
%\[
%\log\, f(y,\eps)-\mathcal{A}(\eps)-e^{-y}\in[-\delta,\delta]
%\]
%where $\mathcal{A}(\eps)$ is an $\eps$ dependent constant (whose value
%is not important since in our case it will cancel out).
Using this fact, some of the factors in (\ref{eq:ruv}) can be written as
\[
(e^{-\eps\theta};e^{-\eps\theta})_{\theta^{-1}\left[\eps^{-1}\log\eps^{-1}-\eps^{-1}\log(r/v)\right]}=f(-\log(r/v)-\log\theta^{-1};\,\eps\theta) \;.
\]
Therefore for any $\delta>0$, if $\eps$ is sufficiently small
\[
(e^{-\eps\theta};e^{-\eps\theta})_{\theta^{-1}\left[\eps^{-1}\log\eps^{-1}-\eps^{-1}\log(r/v)\right]}\cdot e^{-\mathcal{A}(\eps\theta)-r/(v\theta)}\in[1-\delta,1+\delta] \;.
\]
Similarly, one has (now with $y=-\log(u/v)-\log\theta^{-1} +\eps\theta$)
\[
(e^{-\eps\theta};e^{-\eps\theta})_{\theta^{-1}\left[\eps^{-1}\log\eps^{-1}-\eps^{-1}\log(u/v)\right] +1}
\cdot e^{-\mathcal{A}(\eps\theta)-u/(v\theta e^{\eps\theta})}\in[1-\delta,1+\delta] \;.
\]

For the other two factors in (\ref{eq:ruv}), one has
\begin{equation}
\begin{aligned} & \frac{
(e^{-\eps m_{1}},e^{-\eps\theta})_{\theta^{-1}\left[\eps^{-1}\log(r/v)-\eps^{-1}\log(u/v)\right] +1}
}{(e^{-\eps\theta},e^{-\eps\theta})_{\theta^{-1}\left[\eps^{-1}\log(r/v)-\eps^{-1}\log(u/v)\right] +1}}\\
 & \qquad=
 \frac{(e^{-\eps\theta},e^{-\eps\theta})_{m_{1}/\theta+\theta^{-1}\left[\eps^{-1}\log(r/v)-\eps^{-1}\log(u/v)\right]}}{
 (e^{-\eps\theta},e^{-\eps\theta})_{\theta^{-1}\left[\eps^{-1}\log(r/v)-\eps^{-1}\log(u/v)\right] +1}
 (e^{-\eps\theta},e^{-\eps\theta})_{m_{1}/\theta-1}}
\end{aligned}
\label{eq:ratio}
\end{equation}
The factor $(e^{-\eps\theta},e^{-\eps\theta})_{m_{1}/\theta-1}$ in
the denominator will only contribute as a normalization factor. To
compute it, we use the $q$-Gamma function (\ref{eq:q-gamma}).
With $q=e^{-\eps\theta}$ and $x=m_{1}/\theta$, we have
\[
(e^{-\eps\theta};e^{-\eps\theta})_{\frac{m_{1}}{\theta}-1}=\Gamma_{e^{\eps\theta}}(m_{1}/\theta)\,(1-e^{-\eps\theta})^{m_{1}/\theta-1} \;.
\]
Therefore,
\[
\lim_{\eps\to0}\,(\eps\theta)^{1-m_{1}/\theta}(e^{-\eps\theta};e^{-\eps\theta})_{\frac{m_{1}}{\theta}-1}=\Gamma(k) \;.
\]
By definition, the ratio of the other two factors in (\ref{eq:ratio})
is
\[
\prod_{i=2}^{m_{1}/\theta}
(1-(e^{-\eps\theta})^{\theta^{-1}\left[\eps^{-1}\log(r/v)-\eps^{-1}\log(u/v)\right]+i})\to(1-u/r)^{\frac{m_{1}}{\theta}-1} \;.
\]

Note that the set of admissible values of the conditioning variables $u,v$
also depends on $\eps$, that is, $\log u,\log v\in (t-(n-1))\log\eps^{-1}-\theta\eps\Z_+$. In the interval $[u,ue^{\theta\eps})$ there is only one admissible value of $u$, and similarly for $v$. This  implies (combining the above analysis together)
\be \label{eq:cond-gamma-rv}
\begin{aligned}
\lim_{\eps\to0}\, & (\theta\eps)^{-1}\mathbb{P}\Big(e^{F^{\eps}(t,n)}=r\,\Big|\, e^{F^{\eps}(t-1,n-1)}\in[u,ue^{\theta\eps}),\, e^{F^{\eps}(t-1,n)}\in[v,ve^{\theta\eps})\Big)\\
 & = %\Big(\frac{v}{\theta r}\Big)\cdot
 \frac{(r/v)^{k}\,(1-u/r)^{k-1}}{\Gamma(k)\theta^{k}}\, e^{-(r-u)/(v\theta)}\\
 & =\frac{r}{v}\cdot(\frac{r-u}{v})^{k-1}\, e^{-(r-u)/(v\theta)}\Big/\Big(\Gamma(k)\theta^{k}\Big) \;.
\end{aligned}
\ee

Now we follow the same argument as in the proof of  Lemma \ref{lem:first-particle} about convergence of discrete valued random variables $F^\eps$ to continuum valued random variable $F$.
This gives the conditional probability of $F(t,n)=s:=\log r$, conditioned on $F(t-1,n-1)=\log u$ and $F(t-1,n)=\log v$. Let $w=(e^s-u)/v$, then $ds=\frac{v}{r}dw$. Note that this factor $\frac{v}{r}$ cancels with the factor $\frac{r}{v}$ in the last line of \eqref{eq:cond-gamma-rv}. Therefore, we have that $F^{\eps}(t,n)\to F(t,n)$ and that $(e^{F(t,n)}-e^{F(t-1,n-1)})/e^{F(t-1,n)}$
are $\mbox{Gamma}(k,\theta)$ distributed. The recursive relation follows immediately.
\end{proof}

\section{Strict-weak Fredholm determinant formula: Proof of Theorem \ref{thm:Fredholm-SWpolymer}}\label{sec:Fredholm-SWpolymer}

We prove the Fredholm determinant formula in Theorem \ref{thm:Fredholm-SWpolymer} for the Laplace transform of the polymer partition function. Firstly, we show that under proper scalings, the left hand side of
(\ref{eq:q-Laplace}) goes to the Laplace transform of $Z(t,n)=e^{F(t,n)}$.
We scale the parameter as
\[
\zeta=-\eps^{n-t}\theta u
\]
and scale other parameters as in (\ref{eq:defFeps}) and (\ref{eq:scaling}). Then
we have
\[
\mathbb{E}\bigg[\frac{1}{(\zeta q^{X_{n}(t)+n};q)_{\infty}}\bigg]=\mathbb{E}\big[e_{q}(x_{q})\big]
\]
where
\[
e_{q}(x)=\frac{1}{\big((1-q)x;q\big)_{\infty}}
\]
is the $q$-exponential, and
\[
x_{q}=-\frac{\eps\theta}{1-q}\, u\, e^{F^{\eps}(t,n)}  \;.
\]
Therefore noticing that $e_{q}(x)\to e^{x}$ uniformly and $\frac{\eps\theta}{1-q}\to1$
as $\eps\to 0$ and $q\to 1$ under the scaling \eqref{eq:scaling},
we have, by Lemma 4.140 of \cite{MR3152785} along with the convergence result of Corollary \ref{cor:conv}, that
\[
\lim_{\eps\to0}\mathbb{E}\Big[\frac{1}{(\zeta q^{X_{n}(t)+n};q)_{\infty}}\Big]=\mathbb{E}\big[\exp(-ue^{F(t,n)})\big] \;.
\]

As the next step, we study the limit of $K_{\zeta}$ from (\ref{eq:q-Laplace}) with
\[
\zeta=-\eps^{n-t}\theta u,\qquad w=q^{v},\qquad w'=q^{v'} \;.
\]
At first, we will not take care of describing contours and will only discuss pointwise convergence of the integrand.

Recalling the $q$-Gamma function from (\ref{eq:q-gamma}), we can write
\[
\frac{(q^{s}w;q)_{\infty}^{n}}{(w;q)_{\infty}^{n}}=\Big(\frac{\Gamma_{q}(v)}{\Gamma_{q}(s+v)}\frac{1}{(1-q)^{s}}\Big)^{n} \;,
\]
\begin{equation} \label{eq:willuse}
\frac{(\alpha w;q)_{\infty}^{t}}{(\alpha q^{s}w;q)_{\infty}^{t}}=\Big((1-q)^{s}\frac{\Gamma_{q}(\frac{m_{1}}{\theta}+s+v)}{\Gamma_{q}(\frac{m_{1}}{\theta}+v)}\Big)^{t} \;.
\end{equation}
Combining the above expressions, as well as noting the Jacobian factor $\tfrac{dw}{dv} = q^v \log q$, we find that $\det(1+K_{\zeta})_{L^2(C_1)} = \det(1+\tilde K_{\zeta})_{L^2(C_0)}$ where $C_0$ is a small circle around the origin and the kernel $\tilde K_{\zeta}$ is defined as
\[
\tilde{K}_{\zeta}(v,v') =\frac{1}{2\pi\I}\int_{-\I\infty+1/2}^{\I\infty+1/2}\, h^{q}(s)\, ds
\]
where
\[h^{q}(s) =\frac{\pi}{\sin(-\pi s)} (-\zeta)^{s}
\Big(\frac{\Gamma_{q}(v)}{\Gamma_{q}(s+v)}\frac{1}{(1-q)^{s}}\Big)^{n}\Big((1-q)^{s}\frac{\Gamma_{q}(\frac{m_{1}}{\theta}+s+v)}{\Gamma_{q}(\frac{m_{1}}{\theta}+v)}\Big)^{t}
\frac{q^v \log q}{q^{s}q^{v}-q^{v'}}.
\]

As $\eps\to0$, observe that
\begin{align}
\eps^{ts}\Big(\frac{-\zeta}{(1-q)^{n}}\Big)^{s}&\to(u/\theta^{n-1})^{s} \;,\\
\frac{q^{v}\log q}{q^{s+v}-q^{v'}}&\to\frac{1}{s+v-v'} \;,\\
\eps^{-ts}\bigg((1-q)^{s}\frac{\Gamma_{q}(\frac{m_{1}}{\theta}+s+v)}{\Gamma_{q}(\frac{m_{1}}{\theta}+v)}\bigg)^{t}&\to\theta^{st}\bigg(\frac{\Gamma(\frac{m_{1}}{\theta}+s+v)}{\Gamma(\frac{m_{1}}{\theta}+v)}\bigg)^{t} \;,\\
\bigg(\frac{\Gamma_{q}(v)}{\Gamma_{q}(s+v)}\bigg)^{n}&\to\bigg(\frac{\Gamma(v)}{\Gamma(s+v)}\bigg)^{n} \;.
\end{align}
Letting $\tilde{z}=s+v$, and $t=\kappa n$, the above considerations suggest that $\tilde{K}_{\zeta}(v,v')$ converges to
\begin{equation}
K_{u}(v,v')=\frac{1}{2\pi\I}\int_{-\I\infty+1/2}^{\I\infty+1/2}\,\frac{\pi}{\sin(\pi(v-\tilde{z}))}\frac{F(\tilde{z})}{F(v)}\frac{1}{\tilde{z}-v'}\bigg(\frac{\Gamma(v)}{\Gamma(\tilde{z})}\bigg)^{n}d\tilde{z}\label{eq:Ku}
\end{equation}
where
\[
F(z)=u^{z}\,\theta^{(\kappa n-(n-1)) \,z}\,\Gamma\Big(\frac{m_{1}}{\theta}+z\Big)^{\kappa n} \;.
\]
%Note that the poles of $F$ are all on the negative real line. We can deform the contours such that $v,v'$ are points on a small circle around the origin, and $\tilde{z}$ is integrated along the straight line $-\frac{1}{2}+\I\mathbb{R}$.

If we can suitably strengthen the above pointwise convergence of the integrand of the kernel then we can deduce the convergence of the associated Fredholm determinants $\det(1+K_{\zeta})$ to $\det(1+K_{u})$. The proof of this convergence which we provide now is analogous to that in \cite{MR3152785}. First of all, note that for any fixed compact subset $D$ of $-\frac{1}{2}+\I\mathbb{R}$,
the convergence of the integrand of $\tilde K_{\zeta}$ is uniform over $s\in D$.
This is due to the fact that the $\Gamma_{q}$ function converges uniformly
to the $\Gamma$ function on compact domains away from poles (the terms are easily seen to satisfy uniform convergence as well).

The following tail bounds shows that the integrals in $s$ variables
in the Fredholm expansion can be restricted to compact sets, as the
contribution to the integrals from outside these compact sets can
be  bounded (uniformly in $q$ near 1) arbitrarily close to zero by choosing large enough compact
sets.
\begin{lem}
\label{lem:tail-hq}
Let $D\subset \mathbb{C}$ be an arbitrary compact set containing the unit disk
$\{z:|z|<1\}$. For all $\kappa\geq1$, one has the following
tail bound of $h^{q}(s)$: there exists positive constants $C,c$
such that for all $s\in (-\frac{1}{2}+\I\mathbb{R})\backslash D$, all $q\in(1/2,1)$,
and all $v,v'\in C_{0}$, the following bound holds
\[
|h^{q}(s)| \leq Ce^{-c\,|Im(s)|} \;.
\]
\end{lem}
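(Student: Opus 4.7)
The plan is to decompose $h^q(s)$ into explicit factors, track the exponential rate each contributes in $|\operatorname{Im}(s)|$ as $|\operatorname{Im}(s)|\to\infty$, and verify that the sum of these rates is strictly negative, with all estimates uniform in $q\in(1/2,1)$ and $v,v'\in C_0$. Write $s=-\tfrac12+\I\tau$ so that $|\tau|\to\infty$ along the contour.

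First, the sine prefactor supplies clean exponential decay: on the critical line $|\sin(-\pi s)|=\cosh(\pi\tau)\ge\tfrac12 e^{\pi|\tau|}$, so $|\pi/\sin(-\pi s)|\le 2\pi e^{-\pi|\tau|}$. For the power $(-\zeta)^s$, the scaling $\zeta=-\eps^{n-t}\theta u$ together with $\operatorname{Re}(u)>0$ places $-\zeta$ in the open right half-plane, so its principal argument satisfies $|\arg(-\zeta)|\le \pi/2-\delta$ for some $\delta>0$ (uniformly for $u$ in any compact subset of the right half-plane). Hence $|(-\zeta)^s|\le C\,e^{(\pi/2-\delta)|\tau|}$, and the product of these two factors already decays at rate at least $\pi/2+\delta$.

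The crux is the $q$-Gamma ratio. I would invoke a $q$-analogue of the classical Stirling bound $|\Gamma(x+\I y)|\sim\sqrt{2\pi}\,|y|^{x-1/2}e^{-\pi|y|/2}$, valid uniformly for $q$ in a compact subset of $(0,1)$ and $\operatorname{Re}(z)$ in a bounded set; such bounds are the backbone of the Fredholm determinant convergence arguments in \cite{MR3152785} and can be extracted from the product representation $\Gamma_q(z)=(q;q)_\infty/(q^z;q)_\infty\cdot(1-q)^{1-z}$ together with elementary estimates on $|(q^z;q)_\infty|$. Applied to $|\Gamma_q(s+v)|^{-n}|\Gamma_q(k+s+v)|^{t}$, with the leftover factor $(1-q)^{s(t-n)}$ absorbed through this representation, the combined contribution is bounded by a polynomial in $|\tau|$ times $e^{(n-t)\pi|\tau|/2}$, which is a decay since $t=\kappa n\ge n$. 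The remaining pieces $\Gamma_q(v)^n$, $\Gamma_q(k+v)^{-t}$, and $q^v\log q$ are uniformly bounded for $v\in C_0$ and $q\in(1/2,1)$.

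It remains to control the denominator $q^{s+v}-q^{v'}=q^{v'}(q^{s+v-v'}-1)$. Noting that $\log q/(q^{w}-1)$ has simple poles on $\tfrac{2\pi\I}{\log q}\Z$ which are spaced by at least $2\pi/\log 2$ for $q\in(1/2,1)$, and that near $w=0$ one has $\log q/(q^w-1)\sim 1/w$, one chooses $C_0$ small enough so that $s+v-v'$ stays uniformly bounded away from all these poles for $s\notin D$ (with $D$ containing the unit disk); then $|q^v\log q/(q^{s+v}-q^{v'})|$ grows at most polynomially in $|\tau|$ uniformly in $q$. Summing the exponential rates, we obtain $|h^q(s)|\le Ce^{-c|\tau|}$ with $c=\pi/2+\delta+(\kappa-1)n\pi/2>0$. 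The main obstacle is the uniform-in-$q$ $q$-Stirling estimate---in particular the correct cancellation of the $(1-q)^{\bullet}$ factors between $\Gamma_q$ and the explicit $(1-q)^{s(t-n)}$ term---once which the rest is bookkeeping of exponential rates.
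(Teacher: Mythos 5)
Your overall plan---decompose $h^q(s)$ into factors and add up exponential rates, using the sine factor as the source of decay---is the right shape, and your conclusion is correct. But there is a genuine error in the central step: you invoke a ``$q$-analogue of the Stirling bound $|\Gamma(x+\I y)|\sim\sqrt{2\pi}\,|y|^{x-1/2}e^{-\pi|y|/2}$.'' No such asymptotic holds for $\Gamma_q$ with fixed $q<1$. From the definition $\Gamma_q(z)=(q;q)_\infty/(q^z;q)_\infty\cdot(1-q)^{1-z}$, the Pochhammer factor $(q^z;q)_\infty=\prod_{i\ge 0}(1-q^{x+i}e^{\I y\log q})$ is \emph{periodic} in $y$ with period $2\pi/|\log q|$, so $|\Gamma_q(x+\I y)|$ is bounded above and below (uniformly in $y$, for fixed $x>0$ and $q$ away from $1$). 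It does not decay exponentially. The exponential decay of $\Gamma$ along vertical lines emerges only in the $q\to 1$ limit, which is exactly why the convergence $\Gamma_q\to\Gamma$ is local and cannot be used uniformly in $\operatorname{Im}(s)$. Consequently, the ``decay rate $(n-t)\pi/2$'' that you attribute to the $\Gamma_q$ ratio does not exist; this piece is only polynomially controlled.

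The paper sidesteps this by treating all the $\Gamma_q$ factors as bounded or polynomially growing. It isolates $(\Gamma_q(s+v))^{-n}\Gamma_q(k+s+v)^{t}$, writes it as $\big(\Gamma_q(k+s+v)/\Gamma_q(s+v)\cdot\Gamma_q(k+s+v)^{\kappa-1}\big)^n$, bounds $\Gamma_q(k+s+v)^{\kappa-1}$ by a constant (using $\kappa\ge1$ and the boundedness of $\Gamma_q$ along vertical lines), and bounds the ratio $|\Gamma_q(k+s+v)/\Gamma_q(s+v)|$ by a \emph{polynomial} in $|\operatorname{Im}(s)|$ via the $q$-Gamma recursion $\Gamma_q(x+1)=[x]_q\Gamma_q(x)$ and a cited estimate (\cite[Lemma~2.7]{Barraquand2015}). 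The sine factor then supplies all of the exponential decay by itself. Because in your accounting the sine rate $\pi$ already dominates the $(-\zeta)^s$ growth rate $\pi/2-\delta$, your final conclusion $|h^q(s)|\le Ce^{-c|\tau|}$ still holds even after removing the spurious $\Gamma_q$-decay, but as written the proof rests on a false asymptotic for $\Gamma_q$ and would need to be repaired by replacing the $q$-Stirling step with a polynomial bound of the type the paper uses.
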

\begin{proof}
The factor $\left|\frac{\pi}{\sin(-\pi s)}\right|$ decays exponentially in $|Im(s)|$.
The factors $(-\zeta)^{s},\Gamma_{q}(v),(1-q)^{\pm s},\Gamma_{q}(\frac{m_{1}}{\theta}+v)^{-1}$, and
$\frac{q^v \log q}{q^{s}q^{v}-q^{v'}}$ and $\Gamma_{q}(\frac{m_{1}}{\theta}+s+v)^{\kappa-1}$
can be all bounded by constants independent of $q$ and $|Im(s)|$.
Therefore we only need to bound the quantity
\[
\Big(\frac{1}{\Gamma_{q}(s+v)}\Big)^{n}
\Big(\Gamma_q(\frac{m_{1}}{\theta}+s+v)\Big)^{t}
=
\Big(\frac{\Gamma_q(\frac{m_{1}}{\theta}+s+v)}{ \Gamma_{q}(s+v)}
\cdot \Gamma_q(\frac{m_{1}}{\theta}+s+v)^{\kappa-1}\Big)^n  \;.
\]
Using the assumption $\kappa\geq 1$, one has
\[
%\max\Big(
\Big| \Gamma_q(\frac{m_{1}}{\theta}+s+v)^{\kappa-1} \Big| 
%\Big| \frac{\Gamma_{q}(\frac{m_{1}}{\theta}+s+v)}{\Gamma_{q}(s+v)} \Big|
%\Big)
<C' \;,
\qquad s\in (-\frac{1}{2}+\I\mathbb{R})\backslash D
\]
for a constant $C'$ independent of $q$ and $|Im(s)|$. 
Furthermore, writing $s=-\frac{1}{2}+iy$ and using $\Gamma_q (x+1)=[x]_q \Gamma_q(x)$ where $[x]_q$ is the q-number, one has
\[
\Big| \frac{\Gamma_{q}(\frac{m_{1}}{\theta}+s+v)}{\Gamma_{q}(s+v)} \Big|
= \Big| \frac{
	\Gamma_{q}(\frac{m_{1}}{\theta}+\frac{1}{2}+v+iy) \, [-\frac{1}{2}+v +iy]_q}
	{\Gamma_{q}(\frac{1}{2}+v+iy) \, [\frac{m_{1}}{\theta}-\frac{1}{2}+v +iy]_q } \Big|
	\;.
\]
For $s\in (-\frac{1}{2}+\I\mathbb{R})\backslash D$,
the norm of the ratio of the two q-numbers is bounded by a constant 
uniformly in $q \in(1/2,1)$. 
Furthermore, since $v\in C_0$ and $C_0$ is a sufficiently small circle around the origin,
$\frac{m_{1}}{\theta}+\frac{1}{2}+Re(v)$ and $\frac{1}{2}+Re(v)$
are positive. Therefore we can apply  \cite[Lemma~2.7]{Barraquand2015}, which states that there exists a constant $C''>0$ such that for all $s\in (-\frac{1}{2}+\I\mathbb{R})\backslash D$,
\[
\Big| \frac{
	\Gamma_{q}(\frac{m_{1}}{\theta}+\frac{1}{2}+v+iy)} 	{\Gamma_{q}(\frac{1}{2}+v+iy)  } \Big| \le C''(|y|^{\frac{m_{1}}{\theta}+1} +1) 
	\;.
\]
Since this polynomially growing bound is dominated by
 the exponential decaying factor mentioned in the beginning of the proof,
 the desired tail bound holds.
\end{proof}
The condition $\kappa\geq 1$ of the previous lemma is only a very tiny restriction. In fact the partition function is zero for $t<n-1$ by definition of the allowed polymer paths.

The following result together with Hadamard\textquoteright{}s bound
shows that it suffices to consider only a finite number of terms in
the Fredholm expansion, as the contribution of the later terms can
be bounded  (uniformly in $q$ near 1) arbitrarily close to zero by going out far enough in the
expansion.
\begin{lem}
There exists a constant $C>0$ such that for all $q\in(1/2,1)$, and
all $v,v'\in C_{0}$ one has $|\tilde{K}_{\zeta}(v,v')|<C$.\end{lem}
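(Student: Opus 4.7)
The plan is to split the integration contour $\tfrac{1}{2}+\I\R$ of $\tilde K_\zeta$ into a compact central segment and an unbounded tail, and bound each contribution uniformly in $q\in(1/2,1)$, $v,v'\in C_{0}$. Fix a compact set $D\subset\mathbb{C}$ containing the closed unit disk (as in Lemma~\ref{lem:tail-hq}) and set $K:=(\tfrac{1}{2}+\I\R)\cap D$. Then
\[
\tilde K_\zeta(v,v')=\frac{1}{2\pi\I}\Big(\int_{K}+\int_{(\tfrac{1}{2}+\I\R)\setminus D}\Big)h^{q}(s)\,ds.
\]
The tail is controlled immediately by Lemma~\ref{lem:tail-hq}: the exponential bound $|h^{q}(s)|\le Ce^{-c|Im(s)|}$ is integrable and independent of $v,v',q$, so it yields a uniform bound on the tail integral.

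For the central piece, the goal is to show that $|h^{q}(s)|$ is bounded uniformly in $s\in K$, $v,v'\in C_{0}$, $q\in(1/2,1)$, and then conclude using the finite arclength of $K$. Each factor of $h^{q}$ is handled in turn. The trigonometric factor $\pi/\sin(-\pi s)$ is continuous on $K$, which avoids integer poles, and so is bounded there. By choosing $C_{0}$ to be a sufficiently small circle around the origin, the four arguments $v$, $s+v$, $\tfrac{m_{1}}{\theta}+v$, and $\tfrac{m_{1}}{\theta}+s+v$ remain in a fixed compact subset of $\mathbb{C}$ bounded away from the poles $\{0,-1,-2,\ldots\}$ of $\Gamma$; on such sets $\Gamma_{q}\to\Gamma$ uniformly, so each $q$-Gamma factor in $h^{q}$ is uniformly bounded (and bounded away from $0$ where it appears in a denominator). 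Finally, $\tfrac{q^{v}\log q}{q^{s+v}-q^{v'}}=\tfrac{q^{v-v'}\log q}{q^{s+v-v'}-1}$, and since $(s+v)-v'$ has real part close to $\tfrac{1}{2}$ and bounded imaginary part, $|q^{s+v-v'}-1|$ stays bounded below while $|\log q|\le\log 2$.

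The delicate step is the combination of the $\eps$-dependent prefactors $(-\zeta)^{s}(1-q)^{(t-n)s}$, each of which individually diverges as $q\to 1$. Using $\zeta=-\eps^{n-t}\theta u$ and $1-q=\theta\eps(1+O(\eps))$, one rewrites
\[
(-\zeta)^{s}(1-q)^{(t-n)s}=(\theta u)^{s}\Big(\tfrac{1-q}{\eps}\Big)^{(t-n)s},
\]
and since $(1-q)/\eps\to\theta$ stays in a fixed compact subset of $(0,\infty)$ for $q\in(1/2,1)$, the product is uniformly bounded on $K$. Combining the uniform bound on $K$ with the tail estimate yields $|\tilde K_\zeta(v,v')|<C$, as desired.

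The main obstacle is the bookkeeping needed to disentangle the $\eps$- and $q$-dependence in the prefactors; modulo this, the estimate rests on two standard facts already exploited in Lemma~\ref{lem:tail-hq}: uniform convergence $\Gamma_{q}\to\Gamma$ on compact subsets of $\mathbb{C}$ away from poles, and the exponential decay of $\pi/\sin(-\pi s)$ along vertical lines.
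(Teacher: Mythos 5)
Your decomposition into a compact central segment of the contour plus an exponentially decaying tail is exactly the structure the paper uses, and the tail is handled by the same appeal to Lemma~\ref{lem:tail-hq}. The one thing you add is a factor-by-factor justification of the bound on the compact part, and the observation that the a~priori divergent prefactors $(-\zeta)^{s}$ and $(1-q)^{(t-n)s}$ combine into $(\theta u)^{s}\big((1-q)/\eps\big)^{(t-n)s}$, which stays bounded — this is correct and worth spelling out since the paper only says tersely that $h^q(s)$ is bounded on compacts.

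There is, however, a flaw in your justification for the last factor. You write $\tfrac{q^{v}\log q}{q^{s+v}-q^{v'}}=\tfrac{q^{v-v'}\log q}{q^{s+v-v'}-1}$ and then claim that $|q^{s+v-v'}-1|$ ``stays bounded below.'' That is false as $q\to1^-$: since $Re(s+v-v')\approx\tfrac12$ and $Im(s+v-v')$ is bounded on the compact piece, $q^{s+v-v'}=e^{(s+v-v')\log q}\to1$, so the denominator vanishes as $q\uparrow1$. The factor is nonetheless bounded, but for a different reason: both numerator and denominator vanish at the same rate, and the ratio $\tfrac{\log q}{q^{s+v-v'}-1}\to\tfrac{1}{s+v-v'}$ uniformly for $s$ in the compact piece and $v,v'\in C_0$, with $s+v-v'$ bounded away from $0$ (since its real part is close to $\tfrac12$). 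You should replace your ``bounded below'' claim by this uniform-convergence argument (which is, implicitly, what the paper means by ``we use the uniform convergence''). With that correction, the proof is complete and matches the paper's in structure, just with more detail.
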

\begin{proof}
For any compact domain $D$, by Lemma \ref{lem:tail-hq} the integration
over $s$ outside $D$ can be bounded uniformly in $q,v,v'$. Inside
$D$ we use the uniform convergence and the fact that $h^q(s)$ is bounded uniformly in $q$ and $v,v'\in C_0$.
\end{proof}
It is now standard to combine the above estimates to show convergence of the Fredholm determinant expansions. The boundedness of $\tilde{K}_{\zeta}$ (as well as $K_{u}$), compactness of the contour $C_0$, and Hadamard's inequality enables us to cut off the Fredholm determinant expansions after a finite number of terms with small error (going to zero as the number of terms increases). Then, using the exponential decay of $h^q(s)$ as well as its uniform convergence to its pointwise limit, we arrive at the convergence of these finite Fredholm expansion terms to their limiting analogs, thus completing the proof of the theorem.

\section{Asymptotic analysis: proof of Theorem \ref{thm:main}\label{sec:Asymptotic-analysis}}

We start by observing that a suitable limit of the Laplace
transform of $Z(\kappa n,n)$ will give the asymptotic probability distribution of $\log Z(\kappa n,n)$, centered and scaled. We then apply the same limit to the Fredholm determinant formula proved earlier for $Z(\kappa n,n)$. Overall, the proof of Theorem \ref{thm:main} follows a similar line as in \cite{MR3116323}.

Let
\[
u=u(n,r,k,\theta,\kappa):=e^{-n\bar{f}_{k,\theta,\kappa}-rn^{1/3}} \;,
\]
where $\bar{f}_{k,\theta,\kappa}$ will be specified later.
If for each $r\in\mathbb{R}$ we have
\[
\lim_{n\to\infty}\mathbb{E}e^{-uZ(\kappa n,n)}=p_{k,\theta,\kappa}(r)
\]
where $p_{k,\theta,\kappa}(r)$ is a continuous probability distribution
function, then, by Lemma 4.1.39 of \cite{MR3152785},
\[
\lim_{n\to\infty}\mathbb{P}\bigg(\frac{\log Z(\kappa n,n)-n\bar{f}_{k,\theta,\kappa}}{n^{1/3}}\le r\bigg)=p_{k,\theta,\kappa}(r) \;.
\]

On account of this, in order to prove Theorem \ref{thm:main} it suffices to show that for $\bar f_{k,\theta,\kappa}$ from (\ref{fktk}) and $\bar g_{k,\kappa}$ from (\ref{gktk}),
\begin{equation}\label{eqlimittotake}
\lim_{n\to\infty}\mathbb{E}e^{-uZ(\kappa n,n)} = F_{GUE}\Big(\big(\frac{\bar{g}_{k,\kappa}}{2}\big)^{-1/3}\, r\Big).
\end{equation}
%In fact, the theorem only asks for this result to hold for $\theta=1$ and $\kappa$ sufficiently large. We will, for the moment, proceed with $\theta$ general and only set it equal to 1 later to simplify the asymptotic analysis.

In order to prove the limit in (\ref{eqlimittotake}) we utilize the Fredholm determinant formula from Theorem \ref{thm:Fredholm-SWpolymer}. Towards this end, define
\[
G(z)=\log\Gamma(z)-\kappa\log\Gamma(k+z)+\big(\bar{f}_{k,\theta,\kappa}-(\kappa-1)\log\theta\big)\, z  \;.
\]
Then we can rewrite \eqref{eq:Ku} as (recall the relation $k=m_1/\theta$)
\begin{equation}
K_{u}(v,v')
=\frac{1}{2\pi\I}\int_{-\I\infty+1/2}^{\I\infty+1/2}
\frac{\pi}{\sin(\pi(v-\tilde{z}))}
\frac{
	e^{nG(v)+rn^{1/3}v}}
{e^{nG(\tilde{z})+rn^{1/3}\tilde{z}}}
\frac{ \theta^{(\tilde{z}-v)} }{\tilde{z}-v'}
d\tilde{z} \;.
\label{eq:KuG}
\end{equation}

The derivatives of $G$ is given by
\begin{align*}
G'(z)&=\Psi(z)-\kappa\Psi(k+z)+\bar{f}_{k,\theta,\kappa}-(\kappa-1)\log\theta \;,\\
G''(z)&=\Psi'(z)-\kappa\Psi'(k+z) \;.
\end{align*}

\begin{lem} \label{lem:exists-cri}
Given the parameters $k>0$, for every $\bar{z}>0$, provided
that $\kappa$ is sufficiently large, there exists $\bar{t}\in(0,\bar{z})$
such that $\Psi'(\bar{t})-\kappa\Psi'(k+\bar{t})=0$.
\end{lem}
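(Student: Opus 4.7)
The plan is to apply the intermediate value theorem to the continuous function
\[
h(t) := \Psi'(t) - \kappa\,\Psi'(k+t)
\]
on the interval $(0,\bar z)$. First, I would record the elementary facts about the trigamma function that drive the argument: $\Psi'$ is smooth and strictly positive on $(0,\infty)$, admits the series representation $\Psi'(t)=\sum_{n\ge 0}(t+n)^{-2}$ so that $\Psi'(t)\sim 1/t^{2}\to+\infty$ as $t\to 0^{+}$, while $\Psi'(k+t)$ is continuous and bounded on $[0,\bar z]$ with $\Psi'(k+\bar z)>0$.

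From these facts, at the left endpoint $h(t)\to+\infty$ as $t\to 0^{+}$, regardless of the value of $\kappa$, because the diverging term $\Psi'(t)$ dominates the bounded term $\kappa\,\Psi'(k+t)$. At the right endpoint $t=\bar z$, one has
\[
h(\bar z)=\Psi'(\bar z)-\kappa\,\Psi'(k+\bar z),
\]
which becomes strictly negative as soon as $\kappa>\Psi'(\bar z)/\Psi'(k+\bar z)$. This produces a concrete threshold $\kappa^{*}(k,\bar z):=\Psi'(\bar z)/\Psi'(k+\bar z)$ above which $h(0^{+})>0>h(\bar z)$; the IVT then supplies the required $\bar t\in(0,\bar z)$ with $h(\bar t)=0$.

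There is no serious obstacle: the lemma reduces to a sign change of a continuous function with a divergent left endpoint and a linearly decreasing (in $\kappa$) right endpoint. The only point to be careful about is verifying the endpoint asymptotics of $\Psi'$, which follows immediately from its series representation. I note in passing that the lemma as stated asserts only existence; uniqueness of $\bar t\in(0,1/2)$ (as implicitly used in Definition~\ref{def:parameters}) would require an additional monotonicity analysis of $h$ via $h'(t)=\Psi''(t)-\kappa\,\Psi''(k+t)$, but this is not part of the present statement.
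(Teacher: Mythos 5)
Your proof is correct and follows the same intermediate-value-theorem strategy as the paper: both exploit $\Psi'(t)\to\infty$ as $t\to 0^+$ and the fact that $\kappa\Psi'(k+t)$ can be made dominant at a fixed point of $(0,\bar z)$ for $\kappa$ large. Your version is in fact a bit cleaner, giving the explicit threshold $\kappa^*=\Psi'(\bar z)/\Psi'(k+\bar z)$ where the paper settles for a sign change near $\min(1/4,\bar z)$.
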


The value $\bar{t}$ depends on $k,\kappa$. We don't write
this dependence explicitly for simplicity of notation.
\begin{proof}
The function $F(z)=\Psi'(z)-\kappa\Psi'(k+z)$ is continuous on $z\in(0,\infty)$.
If $z\to0$, then $F(z)\to\infty$. On the other hand, as $z\to\min(1/4,\bar{z})$,
the quantity $\kappa\Psi'(k+z)$ is bounded below by $\kappa$ times
a constant (depending on $k$), so as long as $\kappa$ is sufficiently
large, and hence $F(z)$ is negative. Therefore there exists $\bar{t}\in(0,\bar{z})$
such that $F(\bar{t})=0$.
\end{proof}
Given a sufficiently large $\kappa$,
let $\bar t=\bar t(k,\kappa)$ be such that $G''(\bar{t})=0$.
Lemma \ref{lem:exists-cri} guarantees that $\bar t$ is small if we assume $\kappa$ large.
One can then choose
\begin{equation}\label{fktk}
\bar{f}_{k,\theta,\kappa}=-\Psi(\bar{t})+\kappa\Psi(k+\bar{t})+(\kappa-1)\log\theta
\end{equation}
so as to make $G'(\bar{t})=0$ as well. Let
\begin{equation}\label{gktk}
\bar{g}_{k,\kappa}=-G'''(\bar{t})
\end{equation}
then formally,
\[
G(z)\approx G(\bar{t})-\frac{\bar{g}_{k,\kappa}}{6}(z-\bar{t})^{3}+h.o.t.,
\]
where $h.o.t.$ stands for higher order terms.
Substitute this into (\ref{eq:KuG}), and make changes of variables
\[
v_1=\big(\tfrac{\bar{g}_{k,\kappa}}{2}\big)^{1/3}n^{1/3}(v-\bar{t}) \;,
\qquad
v_1'=\big(\tfrac{\bar{g}_{k,\kappa}}{2}\big)^{1/3}n^{1/3}(v'-\bar{t}) \;,
\qquad
z_1=\big(\tfrac{\bar{g}_{k,\kappa}}{2}\big)^{1/3}n^{1/3}(\tilde{z}-\bar{t}) \;,
\]
then formally (and for the moment neglecting a discussion of contours), one has (note that $ \theta^{\tilde{z}-v}  \to 1$ as $n\to\infty$)
\[
\lim_{n\to\infty}\mathbb{P}\bigg(\frac{\log Z(\kappa n,n)-n\,\bar{f}_{k,\theta,\kappa}}{n^{1/3}}\le r\bigg)=\det\Big(I+K_r\Big)
\]
where
\[
K_r(v_1,v'_1)=\frac{1}{2\pi i}\int
	\frac{1}{v_1-z_1}
	\frac{\exp\big\{-v_1^{3}/3+(\frac{\bar{g}_{k,\kappa}}{2})^{-1/3}r v_1\big\}}
	{\exp\big\{-z_1^{3}/3+(\frac{\bar{g}_{k,\kappa}}{2})^{-1/3}r z_1\big\}}
	\frac{dz_1}{z_1-v_1'} \;.
\]
The last Fredholm determinant formula (on suitable contours as defined below) is a well-known formula for Tracy-Widom distribution $F_{GUE}\Big(\big(\tfrac{\bar{g}_{k,\kappa}}{2}\big)^{-1/3}\, r\Big)$.

These discussions have formally demonstrated Theorem~\ref{thm:main}.
In the following, we make this derivation rigorous. Note that in the above formal discussions we did not specify the contours. We start with precise definitions of the contours.

\begin{defn}
Define a contour $\mathcal C_{v}$ leaving $\bar{t}$ at an angle $2\pi/3$
as a straight line segment from $\bar{t}$ to $\sqrt{3}\bar{t}i$,
followed by a counter-clockwise circular arc (centered at the origin)
until $-\sqrt{3}\bar{t}i$, then a straight line segment
back to $\bar{t}$.
The contour $\mathcal C_{v}$ is oriented counter-clockwisely.
We also define a contour $\mathcal C_{\tilde{z}}$ which
consists of two rays, symmetric with each other by the real axis,
from $\bar{t}+n^{-1/3}$ leaving at an angle $\pm\pi/3$.
The contour $\mathcal C_{\tilde{z}}$ is oriented so as to have increasing imaginary part.
See Figure
\ref{fig:steep-contours}.
\end{defn}

We shift the contours for $v,v'$ to the contour $\mathcal C_v$
and shift the contour for $\tilde{z}$ as to the contour $\mathcal C_{\tilde z}$.
Provided that $\kappa$ is sufficiently large so that $\bar{t}$ is
sufficiently small, these shifts do not cross the poles.
More precisely, the integrand of the kernel $K_u(v,v')$ contains in its denominator
the factors $\sin(\pi(v-\tilde z))$ which vanishes if $v-\tilde z\in \mathbb{Z}$,
and $\Gamma(\tilde z)$ which is zero at $\tilde z=0,-1,-2,...$,
and $\Gamma(k+v)$ which vanishes at $v=-k.-k-1,-k-2,...$,
and finally the factor $\tilde z-v'$. So
as long as $\bar t$ is small so that the contour $\mathcal C_v$
is sufficiently small,
and $\mathcal C_v $ does not intersect with $\mathcal C_{\tilde z}$,
these points are all avoided.

\begin{center}
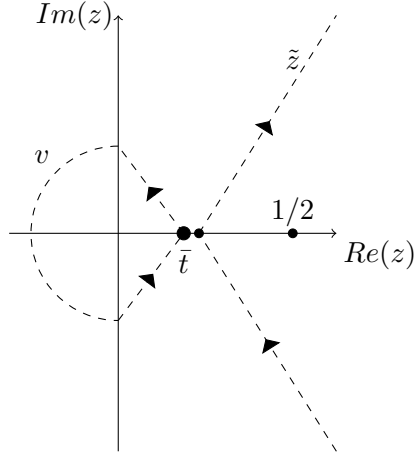
\begin{figure}
\begin{tikzpicture} [scale=2.9]
\begin{scope}[every node/.style={sloped,allow upside down}]
\node at (-0.6,0) {};
\draw [->] (-0.5,0) -- (1,0);
\draw [->] (0,-1) -- (0,1);
\filldraw  (0.3, 0) circle (0.03cm) ;
\node at (0.3,-0.13) {$\bar t$};
\node at (1.2, -0.1) {$Re(z)$};
\node at (-0.2, 1) {$Im(z)$};
%\draw [dashed] (0.5,-1) -- (0.44,-0.2)--(0.42,-0.1) --(0.37,0)--(0.42,0.1)--(0.44,0.2)--(0.5,1);
%\draw [dashed,->] (0.37,0) -- (0.685,0.5); \draw [dashed] (0.685,0.5) -- (1,1);
\draw [dashed] (0.37,0) -- node{\midarrow}  (1,1) ;
\draw [dashed] (1,-1) -- node{\midarrow}  (0.37,0) ;
\filldraw (0.37, 0) circle (0.02cm) ;
\draw [dashed] (0.3,0) -- node{\midarrow}  (0,0.4);
\draw [dashed] (0,-0.4) -- node{\midarrow}  (0.3,0);
\draw [dashed]  (0,0.4) arc (90:270:0.4) ;
%\draw [dashed] (0,0) circle (0.08cm);
\filldraw (0.8, 0) circle (0.02cm) ;
\node at (0.8, 0.1) {$1/2$};
\node at (0.8,0.8) {$\tilde z$};
\node at (-0.35,0.35) {$v$};
%\draw [red, very thick, dashed] (0.3,0) circle (0.13cm) ;
%\node [red] at (0.28,0.3) {$N^{-1/3}$};
\end{scope}
\end{tikzpicture}
\caption{Illustrations of the contours $C_v$ and $C_{\tilde z}$.}
\label{fig:steep-contours}
\end{figure}
\par\end{center}

We will follow the idea from \cite{MR3207195} to parametrize all
other parameters ($\kappa$, $ \bar{f}_{k,\theta,\kappa}$
and $\bar{g}_{k,\kappa}$) by the value of the critical point $\bar{t}$,
and therefore write them as $\kappa_{\bar t}$, $\bar f_{\bar t}$ etc.
%We will take $\theta=1$ for simplicity below, so the parameters $\bar{f}_{k,\theta,\kappa}$
%and $\bar{g}_{k,\theta,\kappa}$ reduce to $\bar{f}_{k,\kappa}$ and
%$\bar{g}_{k,\kappa}$ of Definition~\ref{def:parameters}.

\begin{lem}
\label{lem:asymCv}
Suppose that $\kappa$ is sufficiently large. There
exists constants $c>0,\tilde c>0$ only depending on $\kappa$
such that for all $v\in\mathcal{C}_{v}$
satisfying $|v-\bar{t}|<c$,
\be \label{eq:ultra-close-tbar}
Re(G(v)-G(\bar{t})) \le Re(-\tilde c \, \bar{g}_{k,\kappa}(v-\bar{t})^{3})
\ee
Furthermore, along the part of $\mathcal{C}_{v}$ with $|v-\bar{t}|\geq c$,
one has $Re(G(v)-G(\bar{t}))<c'$ for a strictly negative constant
$c'$.
\end{lem}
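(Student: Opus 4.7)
The plan is a standard steepest-descent analysis around the double critical point $\bar t$, split into a local Taylor estimate and a global compactness argument. For the local estimate ($|v-\bar t|<c$), since $G'(\bar t)=G''(\bar t)=0$ and $G'''(\bar t)=-\bar g_{k,\kappa}$, Taylor's theorem applied to $G$ at $\bar t$ gives
\[
G(v)-G(\bar t)=-\frac{\bar g_{k,\kappa}}{6}(v-\bar t)^{3}+R(v),\qquad |R(v)|\le M_{\kappa}|v-\bar t|^{4},
\]
where $M_{\kappa}$ bounds $|G^{(4)}|/24$ on a fixed disk around $\bar t$. Both line segments of $\mathcal C_v$ emanating from $\bar t$ have angles $\pm 2\pi/3$, so $(v-\bar t)^{3}=|v-\bar t|^{3}$ is real and positive there, and hence $Re\bigl(-\frac{\bar g_{k,\kappa}}{6}(v-\bar t)^{3}\bigr)=-\frac{\bar g_{k,\kappa}}{6}|v-\bar t|^{3}$. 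Choosing $c:=\min(\bar g_{k,\kappa}/(12M_{\kappa}),\,\bar t/2)$ forces the remainder to be dominated by half of the cubic term, giving the stated inequality with $\tilde c=1/12$.

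For the global estimate ($|v-\bar t|\ge c$), the intersection $\mathcal C_v\cap\{|v-\bar t|\ge c\}$ is compact and the continuous function $v\mapsto Re(G(v))-G(\bar t)$ attains its maximum there; it suffices to show this maximum is strictly negative pointwise, then set $c'$ equal to it. I would split this set into the outer radial pieces and the circular arc. On the outer radial pieces, parametrize $v=\bar t+re^{\pm i2\pi/3}$ with $r\in[c,2\bar t]$ and set $H(r):=Re(G(v))$. The local analysis already gives $H$ strictly decreasing on $[0,c]$, and to extend to $[c,2\bar t]$ I would compute $H'(r)=Re\bigl(e^{\pm i2\pi/3}G'(v)\bigr)$ using
\[
G'(v)=\Psi(v)-\kappa\Psi(k+v)+\bar f_{k,\theta,\kappa}-(\kappa-1)\log\theta,
\]
and show $H'(r)<0$ throughout by combining the near-zero asymptotics $\Psi(z)\sim-1/z$ (which dominate for $\kappa$ large, equivalently $\bar t$ small, since $v$ stays close to the origin along the ray) with the cancellation imposed by $G'(\bar t)=0$. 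On the arc $v=\sqrt 3\bar t\,e^{i\phi}$, $\phi\in[\pi/2,3\pi/2]$, I would write $\log\Gamma(v)=\log\Gamma(1+v)-\log v$ and Taylor-expand $\log\Gamma(k+v)$ and the linear term around $v=0$; the defining conditions $G'(\bar t)=G''(\bar t)=0$ produce systematic cancellations of the first two orders in $\bar t$, leaving at leading order a $\bar t$-independent trigonometric polynomial in $\phi$ which can be checked directly to be strictly negative throughout $[\pi/2,3\pi/2]$.

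The hardest step will be establishing monotonicity of $H$ on the outer radial pieces beyond the Taylor radius, since the fixed direction $e^{\pm i2\pi/3}$ is not globally aligned with steepest descent for $G$ and the local descent must be propagated all the way to the arc endpoint at $r=2\bar t$. This is where the largeness of $\kappa$ is essentially used: in the regime $\bar t\ll 1$, the $1/|v|$ singularity of $\Psi(v)$ dominates the smoother contribution of $-\kappa\Psi(k+v)$ along the ray, and the identity $G'(\bar t)=0$ fixes the balance between these terms so that $H'(r)$ has a definite sign. Once the monotonicity on the rays is in hand, the arc estimate follows from the cancellation structure described above, and $c'$ is obtained as the maximum of the two suprema.
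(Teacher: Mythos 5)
Your local estimate ($|v-\bar{t}|<c$) is essentially the paper's: Taylor at the degenerate critical point, with $(v-\bar{t})^3$ real and positive along the rays of angle $\pm 2\pi/3$, giving a cubic bound with $\tilde c$ of order one. The global part, however, takes a genuinely different route. The paper splits $|v-\bar{t}|\ge c$ into $|v-\bar{t}|<\bar t/2$ (where it shows $Re\,G'''(v)<0$ and applies the integral form of Taylor's remainder) and $|v-\bar t|\ge \bar t/2$ (where it expands $\log\Gamma$ near $0$, plugs in the closed-form leading orders of $\kappa_{\bar t}$ and $\bar f_{\bar t}$, and reduces to the signs of $Re(v-\bar t)$ and $\log|v/\bar t|$). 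You instead propose monotonicity of $H(r)=Re\,G(\bar t + re^{\pm i2\pi/3})$ on the whole ray $[c,2\bar t]$ and a separate leading-order check on the arc. The arc check you sketch is indeed correct: writing $v=\bar t w$ one finds $G(v)-G(\bar t)\approx -\log w + 2(w-1) - (w^2-1)/2$, whose real part on $|w|=\sqrt3$, $\arg w=\phi\in[\pi/2,3\pi/2]$, is $-\log\sqrt3 + 2\sqrt3\cos\phi - 3\cos^2\phi \le -\log\sqrt3 <0$.

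The gap lies precisely in the step you flag as the hardest one: your justification for $H'(r)<0$. The claim that ``the $1/|v|$ singularity of $\Psi(v)$ dominates the smoother contribution of $-\kappa\Psi(k+v)$'' is not a correct dominance statement. Since $\kappa\sim\bar t^{-2}$ and $|v|\sim\bar t$ on the contour, the increments $\Psi(v)-\Psi(\bar t)$ and $\kappa\bigl(\Psi(k+v)-\Psi(k+\bar t)\bigr)$ are both of order $\bar t^{-1}$ along the ray, and the quadratic contribution of $\kappa\Psi(k+v)$ is essential (it is the source of the $-(w^2-1)/2$ term above). Moreover, one cannot appeal to the sign of $G'''$ as the paper does near $\bar t$: $Re\,G'''(v)\approx -2\,Re(1/v^3)$ actually changes sign on the ray, becoming positive as $\arg v$ passes $\pi/3$ (around $r\approx\bar t$), which is exactly why the paper caps its $G'''$ argument at $|v-\bar t|<\bar t/2$ and switches method. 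What saves your plan is a global algebraic cancellation, not dominance: with $\rho=r/\bar t$ one computes, to leading order,
\begin{equation*}
Re\bigl(e^{\pm i2\pi/3}g'(1+\rho e^{\pm i2\pi/3})\bigr)
=\frac{\rho^2(\rho-2)}{2(1-\rho+\rho^2)},
\end{equation*}
which is strictly negative for $\rho\in(0,2)$ and vanishes exactly at the arc endpoint $\rho=2$. Your route would therefore succeed once this identity (and control of the $O(\bar t)$ corrections) is actually established, but the heuristic you offer in its place would not produce it and, taken literally, is false. You should also note that the paper's choices $c,c'$ must be uniform in $\bar t$ (equivalently in $\kappa$); your compactness argument gives negativity for each fixed $\bar t$, and the uniformity has to come from the $\bar t$-independence of the leading-order expressions above, which needs to be said.
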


\begin{proof}
By Taylor's theorem  and the fact that $G'(\bar t)=G''(\bar t)=0$ and $G'''(\bar t)=-\bar{g}_{k, \kappa}$, one has
\be \label{eq:Taylor-ultraclose}
G(v)=G(\bar{t})-\frac{1}{6}\bar{g}_{k, \kappa} (v-\bar t)^3+o(|v-\bar t|^3) \;,
\ee
so there exists $c>0,\tilde c>0$
only depending on $\bar t$ (or equivalently on $\kappa$)
such that
within the small neighborhood $|v-\bar{t}|<c$,
the bound
\eqref{eq:ultra-close-tbar} holds.

To show the lemma for $v$ on the other part of $\mathcal C_v$,
let $\gamma_{{\rm E}}=-\Psi(1)=0.577...$ be the Euler-Mascheroni
constant. For $v$ small, on has (see \cite[Section~2]{MR3116323})
\be \label{eq:gamma-exp}
\begin{aligned}
\log\Gamma(v)&=-\log v-\gamma_{{\rm E}}v+O(v^{2}) \;,\\
\Psi(v)&=-\frac{1}{v}-\gamma_{{\rm E}}+O(v) \;,\\
\Psi'(v)&=\frac{1}{v^{2}}+O(1) \;.
\end{aligned}
\ee
Utilizing the expansions \eqref{eq:gamma-exp}, and by the choices of $\bar t$ and $\bar f_{\bar t}=\bar f_{k,\theta,\kappa}$ above,
\begin{equation} \label{eq:kappa_cri}
\kappa_{\bar{t}}=\frac{\Psi'(\bar{t})}{\Psi'(k+\bar{t})}
=\Psi'(k+\bar{t})^{-1}\big(\bar{t}^{-2}+O(1)\big) \;,
\end{equation}
\begin{equation}  \label{eq:f_cri-theta}
\bar f_{\bar{t}} - (\kappa-1)\log\theta
=\kappa_{\bar{t}} \, \Psi(k+\bar{t})-\Psi(\bar{t})
=\frac{\Psi(k+\bar{t})}{\Psi'(k+\bar{t})}\big(\bar{t}^{-2}+O(1)\big)
	+\bar{t}^{-1}+\gamma_{{\rm E}} +O(\bar t) \;.
\end{equation}

We then first show that for $\bar t$ small enough, there exists a constant $c_1 \in(0,1)$ independent of $\bar t$, such that if $c\le |v-\bar t|<c_1\bar t$ then
$Re(G(v)-G(\bar{t}))$ is bounded by a strictly negative constant.
In fact, one has
\[
G'''(v)=\Psi''(v)-\kappa_{\bar t}\,\Psi''(k+v)=\Psi''(v)-\Psi'(\bar t)\Psi'(k+\bar t)^{-1}\Psi''(k+v) \;,
\]
and if $\bar t \to 0$ then
\[
\Psi'(k+\bar t)^{-1}\Psi''(k+v) \to \Psi'(k)^{-1}\Psi''(k)
\]
which is a negative constant, call it $-b_1$; and for $\bar t$ small enough, one has $Re(\Psi''(v))<0$  and its absolute value is much larger than
$b_1\Psi'(\bar t)$.
So one can choose a universal constant $c_1$ (for instance $c_1=1/2$) such that the function $Re(G'''(v))<-b_2$ if $ |v-\bar t|<c_1\bar t$  for some $b_2>0$.
Therefore by the integral form of Taylor's remainder theorem
and $G'(\bar t)=G''(\bar t)=0$,
\[
Re(G(v)-G(\bar t))=\frac{(v-\bar t)^3}{2}\int_0^1 (1-s)^2 Re(G'''(\bar t+s(v-\bar t))) ds
<\frac{(v-\bar t)^3}{6} (-b_2)\;.
\]
Noting that $(v-\bar t)^3>0$ one obtains the claimed bound.

For the region $|v-\bar t|\ge c_1\bar t$,
expanding
\[
\log\Gamma(k+v) = \log\Gamma(k) +v\Psi(k) +\frac{1}{2} v^2\Psi'(k)+O(v^3)
\]
and expanding
$\log\Gamma(k+\bar t)$ around
$\bar t=0$ similarly, one has by the definition of $G$
\begin{equation}\label{eq:G-G}
\begin{aligned} 
& G(v)  - G(\bar{t})
=\Big(\log\Gamma(v)-\kappa_{\bar{t}} \, \log\Gamma(k+v) 
	+ \Big(\bar f_{\bar{t}} - (\kappa-1)\log\theta \Big) \, v\Big)\\
 &\qquad\qquad\qquad
 -\Big(\log\Gamma(\bar{t})-\kappa_{\bar{t}} \, \log\Gamma(k+\bar{t})
 	+ \Big( \bar f_{\bar{t}} -(\kappa-1)\log\theta \Big) \, \bar{t}\Big)\\
 & =-(\log v-\log\bar{t})-\gamma_{{\rm E}}(v-\bar{t})+O(\bar{t}^{2})\\
 & \qquad
 -\kappa_{\bar{t}} \, \Big(
 	(v-\bar{t})\Psi(k) +\frac{1}{2}(v^2-\bar{t}^2)\Psi'(k)
	 +O(\bar{t}^3)
 \Big) + \Big(\bar f_{\bar{t}}-(\kappa-1)\log\theta \Big) \, (v-\bar{t})
\end{aligned}
\end{equation}
where we all the error terms $O(v^n)$ have been replaced by $O(\bar t^n)$ since every point  $v\in \mathcal C_v$ is of order $\bar t$.

By the expressions for $\kappa_{\bar{t}}$ and $\bar f_{\bar t}$ in (\ref{eq:kappa_cri}) and (\ref{eq:f_cri-theta}), one has
\[
\begin{aligned}
-\kappa_{\bar{t}} \, \Psi(k)  + \bar f_{\bar{t}} -(\kappa-1)\log\theta
  &= \frac{\Psi(k+\bar t)-\Psi(k)}{\Psi'(k+\bar t)} \big(\bar t^{-2}+O(1)\big) + \bar t^{-1}+\gamma_{\rm E}+O(\bar t) \\
&=(\bar t+O(\bar t^2)) (\bar t^{-2}+O(1))+\bar t^{-1} +O(1) \\
&=2\bar{t}^{-1}+O(1)
\end{aligned}
\]
and
\[
\kappa_{\bar t} \, (v^2-\bar t^2)\Psi'(k)=c_2 \bar t^{-2} (v+\bar t)(v-\bar t)+O(\bar t^2)
\]
with $|c_2-1|$ arbitrarily small as $\bar t$ sufficiently small.
Substituting the above two identities into \eqref{eq:G-G},
and noting that there exists $c_3>0$ such that
$2\bar t^{-1}+c_2\bar t^{-2}(v+\bar t)<c_3\bar t^{-1}$, we find
\begin{equation}
  Re\big(G(v)-G(\bar{t})\big)< c_3 \bar{t}^{-1}(v-\bar{t}) - \log\frac{v}{\bar t} +O(\bar t)
\label{eq:Gz-Gc}
\end{equation}
Since along the part of $\mathcal{C}_{v}$
with $|v-\bar t|\ge c_1\bar t$,
 the $O(\bar{t})$ error is dominated by
the other two terms which are both $O(1)$,
and $|v|>|\bar t|$ so $\log(v/\bar t)>0$,
therefore $Re(G(v)-G(\bar{t}))$
is bounded by a strictly negative constant.
\end{proof}

\begin{lem}
\label{lem:asym-Cz}
Suppose that $\kappa$ is sufficiently large.
There exists constants $c>0,\tilde c>0$ such that for all $z\in\mathcal{C}_{\tilde{z}}$
satisfying $|z-\bar{t}|<c$,
\be \label{eq:inf-cont-local}
Re\big(G(z)-G(\bar{t})\big)\geq Re(-\tilde c \, \bar{g}_{k, \kappa}(z-\bar{t})^{3})
\ee
Along the part of $\mathcal{C}_{\tilde{z}}$ such that $|z-\bar{t}|\geq c$,
one has
\be \label{eq:inf-contour}
Re\big(G(z)-G(\bar{t})\big)\geq c'Re(z)
\ee
for some constant $c'>0$.
\end{lem}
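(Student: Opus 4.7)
The plan is to mirror the structure of the proof of Lemma~\ref{lem:asymCv}, flipping every inequality. The sign flip is driven by the choice of angle: on $\mathcal{C}_{\tilde z}$ the rays leave $\bar t$ at angles $\pm\pi/3$, so $(z-\bar t)^3 = r^3 e^{\pm i\pi}$ has $Re\bigl((z-\bar t)^3\bigr) = -r^3$, whereas on $\mathcal{C}_v$ at angles $\pm 2\pi/3$ one had $Re\bigl((v-\bar t)^3\bigr) = +r^3$. Combined with $G'''(\bar t) = -\bar g_{k,\kappa} < 0$, this sign change turns each upper bound in Lemma~\ref{lem:asymCv} into the matching lower bound needed here.

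For the local estimate \eqref{eq:inf-cont-local}, I would apply Taylor's theorem exactly as in \eqref{eq:Taylor-ultraclose} to write $G(z) = G(\bar t) - \bar g_{k,\kappa}(z-\bar t)^3/6 + o(|z-\bar t|^3)$. Taking real parts along the $\pm\pi/3$ rays gives $Re\bigl(-\bar g_{k,\kappa}(z-\bar t)^3/6\bigr) = \bar g_{k,\kappa} r^3/6 > 0$, and shrinking $c$ makes this cubic term dominate the $o(\cdot)$ remainder uniformly, yielding \eqref{eq:inf-cont-local} for any $\tilde c < 1/6$.

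For the global estimate \eqref{eq:inf-contour}, I would split $\{z\in\mathcal{C}_{\tilde z}:|z-\bar t|\ge c\}$ into the intermediate subregion $c\le|z-\bar t|<c_1\bar t$ and the outer subregion $|z-\bar t|\ge c_1\bar t$, as in Lemma~\ref{lem:asymCv}. On the intermediate subregion, the integral form of Taylor's remainder together with the uniform sign and magnitude of $Re\bigl(G'''\bigr)$ on the $c_1\bar t$-neighborhood of $\bar t$ (justified exactly as there) yields $Re\bigl(G(z)-G(\bar t)\bigr)\ge Cr^3$; since $Re(z)=\bar t+r/2=O(\bar t)$ on this range, this easily exceeds $c'\,Re(z)$. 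On the outer subregion, I would repeat the derivation leading to \eqref{eq:Gz-Gc}, using the small-argument expansions \eqref{eq:gamma-exp} together with \eqref{eq:kappa_cri}--\eqref{eq:f_cri-theta} to collect linear and quadratic contributions in $z-\bar t$. With $Re(z-\bar t) = +r/2$ (rather than $-r/2$ as on $\mathcal{C}_v$), both the linear piece $\sim 2\bar t^{-1}Re(z-\bar t)$ and the quadratic piece $\sim -(c_2/2)\bar t^{-2}Re\bigl((z+\bar t)(z-\bar t)\bigr)$ flip sign in our favour, producing
\[
Re\bigl(G(z)-G(\bar t)\bigr) \;\ge\; c_3\bar t^{-1}Re(z-\bar t) - \log\bigl|z/\bar t\bigr| + O(\bar t),
\]
in which the linear term dominates the logarithmic correction for $r\gtrsim\bar t$, giving $Re\bigl(G(z)-G(\bar t)\bigr)\ge c'\,Re(z)$ with $c'$ of order $\bar t^{-1}$.

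The main technical obstacle I anticipate is the bookkeeping in the outer subregion: one must carefully track the large prefactors $\kappa_{\bar t}=O(\bar t^{-2})$ and $\bar f_{\bar t}-(\kappa-1)\log\theta=O(\bar t^{-1})$ from \eqref{eq:kappa_cri} and \eqref{eq:f_cri-theta} and check that, with the flipped sign of $Re(z-\bar t)$, the surviving dominant contribution is positive and linear in $Re(z)$. A secondary subtlety is that very far out along the ray the Stirling asymptotics of $\log\Gamma$ eventually dominate and would spoil a purely linear lower bound; this is harmless in the ensuing Fredholm-determinant analysis because the tail of $\mathcal{C}_{\tilde z}$ is controlled separately by the exponential decay of $\pi/\sin(\pi(v-\tilde z))$ in $|Im(\tilde z)|$ (as in Lemma~\ref{lem:tail-hq}), so the present lemma is only ever invoked on a bounded portion of the contour.
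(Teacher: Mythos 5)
Your treatment of the local estimate \eqref{eq:inf-cont-local} and of the intermediate subregion $c\le|z-\bar t|<c_1\bar t$ matches the paper's proof: Taylor's theorem plus the sign flip of $(z-\bar t)^3$ coming from the $\pm\pi/3$ angle, and the uniform negativity of $Re\,G'''$ near $\bar t$. That part is fine.

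The gap is in the outer subregion. You propose to ``repeat the derivation leading to \eqref{eq:Gz-Gc},'' but that derivation lives entirely in the small-argument regime: it uses the expansions \eqref{eq:gamma-exp} for $\log\Gamma$, $\Psi$, $\Psi'$ near $0$, together with \eqref{eq:kappa_cri}--\eqref{eq:f_cri-theta}, and the key step ``all the error terms $O(v^n)$ have been replaced by $O(\bar t^n)$ since every point $v\in\mathcal C_v$ is of order $\bar t$.'' That replacement is legitimate for $\mathcal C_v$ because that contour is a small compact loop of radius $O(\bar t)$. But $\mathcal C_{\tilde z}$ is a pair of \emph{infinite} rays: once $|z-\bar t|\gtrsim\bar t^{1/2}$ (and a fortiori for $|z|$ of order $1$ or larger) the small-argument expansions are simply not valid, the error terms are no longer $O(\bar t^n)$, and the inequality you write down is not a consequence of anything. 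Your closing remark --- that the far tail ``is controlled separately by the exponential decay of $\pi/\sin$'' and that ``the present lemma is only ever invoked on a bounded portion of the contour'' --- is not a proof of the lemma as stated; the statement asserts the bound for \emph{all} $z\in\mathcal C_{\tilde z}$ with $|z-\bar t|\ge c$, and in the Fredholm analysis one does need control of $Re\,G$ on the unbounded part, since the $\sin$ factor alone cannot compensate an $e^{-nG(\tilde z)}$ factor if $Re\,G$ were allowed to drift downward.

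The paper's own proof of the outer region is genuinely different. It does not expand $\log\Gamma$ near $0$ at all. Instead it invokes the globally valid series representation
\[
Re\bigl(\log\Gamma(x+iy)\bigr)=\sum_{j=0}^{\infty}\Bigl(\frac{x}{j+1}-\tfrac12\ln\bigl((x+j)^2+y^2\bigr)+\ln(j)\mathbf{1}_{j\ge1}\Bigr)-\gamma_{\mathrm E}\,x,
\]
differentiates in $y$, and shows term by term that
\[
\frac{\partial}{\partial y}Re\bigl(G(z)-G(\bar t)\bigr)=\sum_{j\ge0}\Bigl(-\frac{y}{(x+j)^2+y^2}+\frac{\kappa y}{(x+k+j)^2+y^2}\Bigr)
\]
is bounded below by a positive constant (using $\kappa$ large and an $\arctan$ estimate for the tail of the sum). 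This monotonicity-in-$y$ argument is the mechanism that produces a lower bound uniformly along the whole ray; there is no analogue of it in your proposal, and without it the outer case is not established.
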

\begin{proof}
For $z\in\mathcal{C}_{\tilde{z}}$ in a sufficiently small neighborhood
of $\bar{t}$, namely $|z-\bar t|<c$ for a constant $c>0$, the bound \eqref{eq:inf-cont-local} follows from the Taylor's theorem
in the same way as \eqref{eq:Taylor-ultraclose} in the proof of  Lemma~\ref{lem:asymCv}; note that
$(z-\bar t)^3$ is now negative for $z\in\mathcal C_{\tilde z}$.

For $z$
outside this small neighborhood but within $O(\bar{t}^{1/2})$ distance
from $\bar{t}$, the argument is the same as
in the case of $|z-\bar t|<O(\bar t)$ in the proof of
Lemma \ref{lem:asymCv},
namely we can  show that in this region $G'''(z)<0$ and note that now
$(z-\bar t)^3<0$  for $z\in\mathcal C_{\tilde z}$.
%\noteH{The difference between ultra close range and this intermediate range:
%for ultra close range, one only needs to know $G'''(\bar t)<0$,
%but for intermediate range, just like proof of  Lemma~\ref{lem:asymCv},
%one needs to know $G'''(z)<0$ not only at $\bar t$.
%Why do I distinguish this intermediate range and the third range below:
%the proof below has a case "and finally for $j=0$....." where
%I want to show that if $j=0$ the summand on the right of \eqref{eq:der-G} is positive.
%There I have to assume $|z-\bar t|>O(\bar t^{1/2})$. Without this intermediate range
%we would only have $|z-\bar t|>c$ but note that $c$ depends on $\bar t$ in
%an unknown way. The term corresponding to $j=0$ is $y$ times
%$A:=-\frac{1}{x^{2}+y^{2}}+\frac{\kappa}{(x+k)^{2}+y^{2}}$
%and since $\kappa=O(\bar t^{-2})$ (see \eqref{eq:kappa_cri}), if $z$ is too close to $\bar t$ the quantity $A$ is roughly
%$-\bar t^{-2} + O(\bar t^{-2})/k^2 $ which might be very negative.
%}

For all $z$ outside this $O(\bar t^{1/2})$ the proof is as
follows. From \cite[Section~5.2]{MR3207195}
\[
Re\big(\log\Gamma(x+iy)\big)=\sum_{j=0}^{\infty}\left(\frac{x}{j+1}-\frac{1}{2}\ln\left((x+j)^{2}+y^{2}\right)+\ln(j)\mathbf{1}_{j\geq1}\right)-\gamma_{{\rm E}}x.
\]
Write $z=x+iy$. Assume that $y>0$, and we will show that the derivative with respect to $y$ of the left side of  \eqref{eq:inf-contour} is bounded below by a positive number, which immediately yields \eqref{eq:inf-contour}. The proof for $y<0$ follows in the same way since $G(x+iy)$ is even in $y$.
Taking derivative, one has
\be \label{eq:der-G}
\frac{\partial}{\partial y}Re\big(G(z)-G(\bar{t})\big)
=\sum_{j=0}^{\infty}\bigg(-\frac{y}{(x+j)^{2}+y^{2}}+\frac{\kappa y}{(x+k+j)^{2}+y^{2}}\bigg) \;.
\ee
Define a constant $C(k,x,y,j)=\big((x+k+j)^2+y^2 \big) / \big((x+j)^2+y^2\big)$.
There exists a constant $C'$ (depending on the fixed shape parameter $k$ of the Gamma distribution) such that:
\begin{itemize}
\item if $\max(x,|y|,j)>C'$ then $C(k,x,y,j)<2$;
\item within the compact domain
\[
\{(x,y,j): j \geq 1 \mbox{ and } \max(x,|y|,j)\le C' \}  \;,
\]
the continuous  function $C(k,x,y,j)$ (regarding $j$ as a real number) is bounded by a
constant,
and within the compact domain
\[
 \{(x,y,j): j=0 \mbox{ and } |z-\bar t|\geq 1  \mbox{ and } \max(x,|y|)\le C'\}
 \]
since $(x+j)^2+y^2$ is bounded away from zero, the continuous  function $C(k,x,y,0)$ is again bounded by a
constant  independent of $x,y,j$;
\item
and finally for $j=0$ and $O(\bar t^{1/2})<|z-\bar t| \le 1$,
since we have shown in \eqref{eq:kappa_cri} that
$\kappa=O(\bar t^{-2})$, the second term on the right of \eqref{eq:der-G}
is $O(\bar t^{-3/2})$ which dominates over the first term there which is $O(\bar t^{-1/2})$.
\end{itemize}
Therefore if $\kappa$ is sufficiently large,
every summand on the right of \eqref{eq:der-G} is positive.

We show that summing over sufficiently many (positive) terms
on the right of \eqref{eq:der-G} will give a quantity bounded below
by a positive constant independent of $x,y$.
In fact, within a compact domain the right side of \eqref{eq:der-G}
is bounded below
by a positive constant. And outside this compact domain,
the right side of \eqref{eq:der-G} is bounded below
by $(\frac{\kappa}{2}-1)y \sum_{j} \frac{1}{(x+j)^{2}+y^{2}}$.
The sum over $j$ from $0$ to $J$
is estimated by $\frac{1}{y} \Big(\arctan\frac{J+x}{y}-\arctan\frac{x}{y}\Big)$.
The factor $\frac{1}{y} $ cancels with the factor $y$ outside the sum and we obtain a strictly positive number. Therefore the desired bound
holds.
\end{proof}

\begin{proof}[Proof of Theorem~\ref{thm:main}]
Given Lemma~\ref{lem:asymCv} and Lemma~\ref{lem:asym-Cz}
the proof of the theorem is standard (see for instance \cite{MR2796514}, \cite{MR3207195} or \cite{MR3116323}). 
Indeed the above two lemmas show that for any $\eps>0$,
one can restrict to finite number of leading terms in the Fredholm series expansion, and localize the integrals in these leading terms to a window of size $n^{-1/3}$ around the critical point, both approximations causing errors that are bounded by $\eps/3$ uniformly in $n$. Rescaling the window by $n^{1/3}$, the integrals from $K_u$ then converge to the integrals from $K_r$, which is essentially shown in the beginning of the section.
\end{proof}

\section{Replica method for strict-weak polymer} \label{sec:Replica}

Given $\overrightarrow{n}=(n_{1}\ge...\ge n_{k})$, consider
the moments
\begin{equation}
u(t,\overrightarrow{n})=\mathbb{E}\bigg[\prod_{i=1}^{k}Z(t,n_{i})\bigg] \;.
\label{eq:def-u}
\end{equation}
In this section we will show an explicit formula for $u(t,\overrightarrow{n})$,
which is stated in Theorem~\ref{thm:mom-formu} below.
Define an operator
\[
\tau^{(i)}u(t,\overrightarrow{n})=\tau^{(i)}u(t,n_{1},...,n_{k})
=u(t,n_{1},...,n_{i}-1,...,n_{k})\;.
\]

\begin{lem}\label{lemtrueevol}
$u(t,\overrightarrow{n})$ solves the following evolution equation
\begin{equation}
u(t+1,\overrightarrow{n})=\sum_{A\subset\{1,...,k\}}\Big(\prod_{i=1}^{\ell}m_{c_{i}^{A}}\Big)\,\Big(\prod_{i=1}^{\ell}
\prod_{j \,=\,  c_1+...+c_{i-1} \atop +\, c_i^A+1}^{ c_1+...+c_i} \tau^{(j)}\Big)
\, u(t,\overrightarrow{n})
\label{eq:true-ev}
\end{equation}
where $\overrightarrow{n}$ is such that
\[
n_{1}=...=n_{c_{1}} \; > \; n_{c_{1}+1}=...=n_{c_{1}+c_{2}} \; > \;
\cdots\cdots
\; > \; n_{c_{1}+...+c_{\ell-1}+1}=...=n_{c_{1}+...+c_{\ell}}
\]
for some positive integers $c_{1},...,c_{\ell}$ so that $\sum_{i=1}^{\ell}c_i=k$,
and $ $
\[
c_{i}^{A}=\#\,\Big( \Big\{ \sum_{j=1}^{i-1} c_j+1,\; \sum_{j=1}^{i-1} c_j+2,\;
... \;,\; \sum_{j=1}^{i-1} c_j+c_{i} \Big\}\,\cap\, A\Big)
\]
where $\#$ means the number of elements in a set, and finally $m_{i}$
is the $i$-th moment of a Gamma random variable with shape parameter
$k$ and scale parameter $\theta$ (with the convention $m_0=1$).
%\[
%u(t+1,\overrightarrow{n})=\prod_{i=1}^{k}(m_{1}+\tau^{(i)})\, u(t,\overrightarrow{n})
%\]
\end{lem}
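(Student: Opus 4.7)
The plan is to expand the partition function recursion \eqref{eq:Recursive-Z} directly inside the product in \eqref{eq:def-u}. Writing
\[
u(t+1,\vec n) = \mathbb{E}\prod_{i=1}^k \bigl[Y(t,n_i)\, Z(t,n_i) + Z(t,n_i-1)\bigr],
\]
I would expand the product term by term: each factor contributes either the ``$Y Z$'' summand or the ``shifted $Z$'' summand, which is parametrized by choosing a subset $A\subset\{1,\dots,k\}$ (the $i\in A$ are the indices choosing $Y(t,n_i)Z(t,n_i)$). This yields
\[
u(t+1,\vec n)=\sum_{A\subset\{1,\dots,k\}} \mathbb{E}\Bigl[\prod_{i\in A} Y(t,n_i)\Bigr]\cdot \mathbb{E}\Bigl[\prod_{i\in A} Z(t,n_i)\prod_{i\notin A} Z(t,n_i-1)\Bigr],
\]
where the factorization of the expectation uses that the weights $Y(t,\cdot)$ on the row $t\mapsto t+1$ are independent of everything up to time $t$ (in particular of the $Z(t,\cdot)$).

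Next I would handle the $Y$-expectation. The key subtlety is that $Y(t,n)$ is indexed by the spatial coordinate $n$, so whenever $n_i=n_j$ the corresponding $Y$'s coincide. Grouping the sorted indices $n_1\ge\cdots\ge n_k$ into the $\ell$ maximal blocks of equal values (of sizes $c_1,\dots,c_\ell$ as in the statement), the $Y$'s from distinct blocks are independent, while within block $i$ all $Y(t,n_j)$ are the same random variable. Its exponent inside $\prod_{i\in A}Y(t,n_i)$ is exactly the number of elements of $A$ falling in block $i$, namely $c_i^A$. Using the moment notation $m_p:=\mathbb{E}[Y^p]$ for $Y\sim\mathrm{Gamma}(k,\theta)$ (with $m_0=1$) gives $\mathbb{E}\prod_{i\in A}Y(t,n_i) = \prod_{i=1}^\ell m_{c_i^A}$, which matches the prefactor in \eqref{eq:true-ev}.

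Finally I would match the remaining $Z$-expectation to the $\tau$-operators. By definition,
\[
\mathbb{E}\Bigl[\prod_{i\in A}Z(t,n_i)\prod_{i\notin A}Z(t,n_i-1)\Bigr]
= \Bigl(\prod_{j\notin A}\tau^{(j)}\Bigr)\, u(t,\vec n),
\]
since $\tau^{(j)}$ shifts the $j$-th argument by $-1$ and the $\tau^{(j)}$'s commute. Within block $i$, the set $A$ selects $c_i^A$ of the $c_i$ indices to leave unshifted and $c_i-c_i^A$ to shift; because all $n_j$ in the block are equal, the value of $u(t,\cdot)$ depends only on how many indices in the block are shifted, not on which ones. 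The statement in \eqref{eq:true-ev} simply picks the canonical convention of shifting the last $c_i-c_i^A$ indices within block $i$, i.e.\ $j$ running from $c_1+\cdots+c_{i-1}+c_i^A+1$ up to $c_1+\cdots+c_i$, producing exactly the claimed product of $\tau^{(j)}$'s.

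There is no real analytic obstacle here; the argument is an algebraic expansion combined with the independence structure of the weights. The only delicate point is the bookkeeping for coincident spatial coordinates, which forces the replacement of a naive $k$-fold product of first moments by the block-dependent higher moments $m_{c_i^A}$, and the corresponding reorganization of the shift operators within each block. This coincidence term is exactly what will later play the role of a two-body interaction in the Bethe-ansatz analysis of Section~\ref{sec:Replica}.
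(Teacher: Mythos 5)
Your proposal is correct and follows essentially the same route as the paper's proof: expand $\prod_i Z(t+1,n_i)$ via the recursion, index the resulting $2^k$ terms by subsets $A$, factor the expectation using the independence of the new row of $Y$'s from the $Z(t,\cdot)$'s, collapse coincident $Y$'s into block moments $m_{c_i^A}$, and then reorganize the shifted arguments within each block using the symmetry of $u$ to adopt the canonical ``shift the last $c_i-c_i^A$ indices'' convention. The one place where you are slightly informal is the intermediate identity $\mathbb{E}\bigl[\prod_{i\in A}Z(t,n_i)\prod_{i\notin A}Z(t,n_i-1)\bigr]=\bigl(\prod_{j\notin A}\tau^{(j)}\bigr)u(t,\vec n)$, which as written applies $\tau^{(j)}$ to arguments that are no longer sorted; but your subsequent sentence (equality of the block values makes $u$ depend only on the number, not the identity, of shifted indices in each block) supplies exactly the resorting step that the paper carries out explicitly, so the argument is complete.
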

\begin{proof}
By the recursive relation (\ref{eq:Recursive-Z}),
\begin{equation}
\begin{aligned}\prod_{i=1}^{k} & Z(t+1,n_{i})=\prod_{i=1}^{k}\Big(Y(t,n_{i})\, Z(t,n_{i})+Z(t,n_{i}-1)\Big)\\
 & =\sum_{A\subset\{1,...,k\}}\prod_{i\in A}\Big(Y(t,n_{i})Z(t,n_{i})\Big)\prod_{i\notin A}Z(t,n_{i}-1) \;.
\end{aligned}
\label{eq:prodZ}
\end{equation}
%For any subset $A\subset\{1,...,k\}$, define.
Taking expectations
on both sides, and noting that $Z(t,-)$ is independent of $Y(t,-)$
and $Y(t,n)$ are i.i.d. for different $n$, one has
\[
u(t+1,\overrightarrow{n})=\sum_{A\subset\{1,...,k\}}\Big(\prod_{i=1}^{\ell}m_{c_{i}^{A}}\Big)\,\mathbb{E}\Big[\prod_{i\in A}Z(t,n_{i})\prod_{i\notin A}Z(t,n_{i}-1)\Big] \;.
\]
Note that the last expectation can be written in terms of $u$ by
rearranging the $n$ variables into non-increasing order (see the
definition (\ref{eq:def-u}) of $u$). Within each ``cluster''
consisting of $c_{i}$ identical variables
\[
n_{c_{1}+...+c_{i-1}+1}=...=n_{c_{1}+...+c_{i-1}+c_{i}} \;,
\]
the variables $n_{i}$ with $i\notin A$ are subtracted by $1$ so
they must be rearranged to the right of the other $n_{i}$ with $i\in A$,
resulting in
\[
\begin{aligned}
u(t,\; \cdots\cdots,\;  &   n_{c_{1}+...+c_{i-1}+1},...,n_{c_{1}+...
	+ c_{i-1}+c_{i}^{A}}\;,\\
 & n_{c_{1}+...+c_{i-1}+c_{i}^{A}+1}-1,...,n_{c_{1}+...+c_{i}}-1 \;,\;\cdots\cdots) \;.
\end{aligned}
\]
Therefore we obtain (\ref{eq:true-ev}).
\end{proof}
By the Laplace transform of the Gamma distribution given in (\ref{eq:Laplace-Gamma}),
one has $m_{i}=\theta^{i}\prod_{j=0}^{i-1}(k+j)$.

Let us momentarily consider the true evolution equation in Lemma \ref{lemtrueevol} when $k=1$ and $2$. For $k=1$ we necessarily have $\ell=1$ and $c_1=1$. If $A=\emptyset$,  then $m_{c_1^A}=m_0=1$,  and the product $\prod_j \tau^{(j)}$
is simply $\tau^{(1)}$. If $A=\{1\}$, then $m_{c_1^A}=m_1$,  and no factor contributes to the product $\prod_j \tau^{(j)}$. Therefore
\[
u(t+1,n)=\tau^{(1)}u(t,n)+m_1 u(t,n) \;.
\]
For $k=2$ and when $n_1<n_2$,  we have $\ell=2$ and $c_1=c_2=1$. It is straightforward to check that the cases $A=\emptyset$, $\{1\}$, $\{2\}$ and $\{1,2\}$ give the four terms on the right side below
\[
u(t+1,n_1,n_2)=\Big( \tau^{(1)}\tau^{(2)}+m_1 \tau^{(2)}
+ m_1 \tau^{(1)} + m_1^2 \Big) u(t,n_1,n_2) \;.
\]
And for $k=2$ and when $n_1=n_2$, we have $\ell=1$ and $c_1=2$. One can check that the cases $A=\emptyset$, $\{1\}$, $\{2\}$ and $\{1,2\}$ give the four terms on the right side below
\[
u(t+1,n_1,n_2)=\Big( \tau^{(1)}\tau^{(2)}+m_1 \tau^{(2)}
+ m_1 \tau^{(2)} + m_2 \Big) u(t,n_1,n_2) \;.
\]

Note that for general $k$ and $n_{1}>...>n_{k}$, one has
\begin{equation}
u(t+1,\overrightarrow{n})=\sum_{A\subset\{1,...,k\}}m_{1}^{|A|}\Big(\prod_{i\notin A}\tau^{(i)}\Big)u(t,\overrightarrow{n})=\prod_{i=1}^{k}(m_{1}+\tau^{(i)})\, u(t,\overrightarrow{n})\label{eq:free-ev}
\end{equation}
which can be derived either from (\ref{eq:true-ev}) or by taking
expectation on (\ref{eq:prodZ}). We call (\ref{eq:free-ev}) the
{\it free evolution equation}, and (\ref{eq:true-ev}) the {\it true evolution
equation}. Using the below reduction of the true evolution to the free evolution, it is possible to diagonalize the true evolution equation via coordinate Bethe ansatz. We do not pursue this further here, but reference, for example \cite{BCPS1,BCPS2,TTPLD}.

\begin{lem}
Suppose that $u(t,\overrightarrow{n})$ solves the free evolution
equation (\ref{eq:free-ev}) for all $\overrightarrow{n}=(n_{1}\ge...\ge n_{k})$,
and satisfies the following two-body boundary conditions
\[
\Big(m_{1}^{2}-m_{2}
	+m_{1}(\tau^{(i)}-\tau^{(i+1)})\Big)\, u(t,\overrightarrow{n})=0
	\qquad(n_{i}=n_{i+1}) \;.
\]
Then $u(t,\overrightarrow{n})$ solves the true evolution
equation (\ref{eq:true-ev}).
\end{lem}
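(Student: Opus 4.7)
\emph{Plan.} Since $\tau^{(i)}$ acts only on the $i$-th coordinate, both the free-evolution operator $\prod_{i=1}^{k}(m_1+\tau^{(i)})$ on the right of (\ref{eq:free-ev}) and the right-hand side of the true-evolution equation (\ref{eq:true-ev}) factor as products of operators attached to the maximal clusters of coincident consecutive entries of $\overrightarrow{n}$: a cluster $C$ of size $c$ at positions $p+1,\dots,p+c$ contributes $\prod_{j=p+1}^{p+c}(m_1+\tau^{(j)})$ to the former and $\sum_{a=0}^{c}\binom{c}{a}m_a\prod_{j=p+a+1}^{p+c}\tau^{(j)}$ to the latter (the binomial factor enumerates subsets $A\cap C$ of a given size, all producing the same contribution). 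It therefore suffices to prove, for each single cluster of size $c$ at positions $i,\dots,i+c-1$ with $n_i=\cdots=n_{i+c-1}$, the cluster-wise identity
\[
\prod_{j=i}^{i+c-1}(m_1+\tau^{(j)})\,u \;=\; \sum_{a=0}^{c}\binom{c}{a}m_a\prod_{j=i+a}^{i+c-1}\tau^{(j)}\,u. \qquad(\star)
\]

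I would prove $(\star)$ by induction on the cluster size $c$. The base case $c=1$ is trivial. For $c=2$, the difference of the two sides of $(\star)$ equals $(m_1^2-m_2+m_1(\tau^{(i)}-\tau^{(i+1)}))u$, which vanishes by the assumed two-body boundary condition at $\overrightarrow{n}$. The key observation for the inductive step is that the BC at the pair $(i,i+1)$ remains valid at every point of the form $\overrightarrow{n}-\sum_{\ell\in S}e_\ell$ with $S\subseteq\{i+2,\dots,i+c-1\}$, since such shifts leave $n_i$ and $n_{i+1}$ equal. Hence, inside the expanded product $\prod_{j=i}^{i+c-1}(m_1+\tau^{(j)})u$, one may use the $c=2$ identity to collapse the leftmost pair $(m_1+\tau^{(i)})(m_1+\tau^{(i+1)})$ into $(m_2+2m_1\tau^{(i+1)}+\tau^{(i)}\tau^{(i+1)})$ even after the later factors $(m_1+\tau^{(j)})$ for $j\ge i+2$ have been applied. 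Combining this with the inductive hypothesis applied to the $(c-2)$-sub-cluster at positions $i+2,\dots,i+c-1$ produces an explicit expression that must be matched against the right-hand side of $(\star)$.

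The main obstacle lies in the bookkeeping required to complete the matching. The combination above produces ``gapped'' operator terms of the form $\tau^{(i)}\tau^{(i+1)}\prod_{j\ge i+2+b}\tau^{(j)}u$ (with $\tau$'s missing at positions $i+2,\dots,i+1+b$), which must be converted into the desired contiguous right-end tail $\prod_{j=i+a}^{i+c-1}\tau^{(j)}u$. This conversion is accomplished by iterated application of the BC at further shifted points $\overrightarrow{n}-\sum_{\ell\in S'}e_\ell$ for suitable $S'$; the resulting combinatorial coefficients then collapse exactly to $\binom{c}{a}m_a$ thanks to the Gamma moment recursion $m_{a+1}=\theta(k+a)m_a$, which is precisely the algebraic content of the identity $(Y\,Z(t,n)+Z(t,n-1))^c=\sum_{a}\binom{c}{a}Y^a Z(t,n)^a Z(t,n-1)^{c-a}$ at the level of coincident $Y$'s within a cluster. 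Once this coefficient collapse is verified, the induction closes and $(\star)$, and hence the lemma, follows.
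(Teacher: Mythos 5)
Your overall strategy is the same as the paper's: decompose both evolution operators as products over maximal clusters of equal $n_i$'s, reduce to a single-cluster identity $(\star)$, and then argue by induction on the cluster size $c$. Your base cases are right, and your observation that the two-body boundary condition at the pair $(i,i+1)$ persists under shifts of coordinates outside the pair (because those shifts leave $n_i=n_{i+1}$) is also correct and is exactly what is needed to apply the BC "in the middle" of a product of $\tau$'s.

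The gap is the step you yourself flag as "the main obstacle": you produce terms of the form $\tau^{(i)}\tau^{(i+1)}\prod_{j\ge i+2+b}\tau^{(j)}u$ and assert, without a computation, that iterating the BC converts these into contiguous tails with coefficients that "collapse exactly to $\binom{c}{a}m_a$". That assertion is not obvious; the bookkeeping of which shifted points the BC is applied at, and how the resulting $\theta$-corrections combine with the $m_a$'s, is precisely the content of the lemma, and your sketch does not supply a mechanism that forces the collapse. As written, the inductive step is incomplete.

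The paper avoids this bookkeeping entirely by first rewriting the BC in the equivalent and much handier form $(\tau^{(i)}-\tau^{(i+1)})u=\theta u$ (using $m_1=\theta k$, $m_2=\theta^2k(k+1)$). This lets one trade every $\tau^{(j)}$ in the cluster for $\tau^{(c)}+(c-j)\theta$, so that the difference of the two sides of $(\star)$ becomes a single scalar polynomial identity in the one commuting symbol $\tau^{(c)}$, namely
\[
\prod_{i=1}^{c}\bigl(\theta k+\tau^{(c)}+(c-i)\theta\bigr)
=\sum_{a=0}^{c}\binom{c}{a}\prod_{i=1}^{a}\bigl(\theta k+(i-1)\theta\bigr)\prod_{j=1}^{c-a}\bigl(\tau^{(c)}+(j-1)\theta\bigr),
\]
a Chu--Vandermonde identity for rising factorials, which is then dispatched by a one-line induction using $\binom{c}{a}+\binom{c}{a+1}=\binom{c+1}{a+1}$. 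If you want to close your argument, the most economical fix is exactly this substitution: it replaces the operator-valued bookkeeping of gapped tails by an explicit, one-variable algebraic identity.
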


\begin{proof}
Since $m_{1}=\theta k$, $m_{2}=\theta^{2}k(k+1)$, the two-body boundary
conditions can be re-written as
\[
\big(\tau^{(i)}-\tau^{(i+1)}\big) u(t,\overrightarrow{n}) =\theta u(t,\overrightarrow{n})
	\qquad(n_{i}=n_{i+1}) \;.
\]
It suffices to show that for a ``cluster''
\[
n_{1}=...=n_{c}> n_{c+1}\ge...
\]
one has
\[
\prod_{i=1}^{c}(m_{1}+\tau^{(i)}) u(t,\overrightarrow{n})
= \sum_{A\subset\{1,...,c\}}m_{|A|}
	\prod_{j=|A|+1}^{c}\tau^{(j)} u(t,\overrightarrow{n})\;.
\]
Apply the moments formula of Gamma random variables and the boundary
conditions. The above equation can be written as
\[
\prod_{i=1}^{c} \Big( \theta k+\tau^{(c)}+(c-i)\theta \Big) u(t,\overrightarrow{n})
=\sum_{A\subset\{1,...,c\}}\theta^{|A|}\prod_{i=1}^{|A|}(k+i-1)
	\prod_{j=|A|+1}^{c} (\tau^{(c)}+(c-j)\theta) u(t,\overrightarrow{n}) \;.
\]
Observe that each summand on the right hand side only depends on $A$
via $|A|$. So the above identity is equivalent to
\[
\prod_{i=1}^{c} \Big(\theta k+\tau^{(c)}+(c-i) \theta\Big)
=\sum_{a=0}^{c} {c \choose a} \prod_{i=1}^{a}(\theta k+(i-1)\theta)
	\prod_{j=1}^{c-a}(\tau^{(c)}+(j-1)\theta) \;.
\]
This identity can now be proved by induction. For $c=1$, both sides are $\theta k+\tau^{(c)}$.
Suppose that it holds for $c$ and we show that it holds for $c+1$,
namely the right hand side multiplied by $\theta k+\tau^{(c)}+c\theta$
is equal to the right hand side with $c$ replaced by $c+1$.

In fact, writing $\theta k+\tau^{(c)}+c\theta=(\theta k+a\theta)+(\tau^{(c)}+(c-a)\theta)$,
one has
\[
\begin{aligned}
& {c \choose a}
	\prod_{i=1}^{a}(\theta k+(i-1)\theta)
	\prod_{j=1}^{c-a}(\tau^{(c)}+(j-1)\theta)\cdot(\theta k+\tau^{(c)}+c\theta)\\
 & ={c \choose a}
 \prod_{i=1}^{a+1}(\theta k+(i-1)\theta)
 	\prod_{j=1}^{c-a}(\tau^{(c)}+(j-1)\theta)\\
 & \qquad
 +{c \choose a}
 	\prod_{i=1}^{a}(\theta k+(i-1)\theta)
	\prod_{j=1}^{c-a+1}(\tau^{(c)}+(j-1)\theta) \;.
\end{aligned}
\]
Summing over $a$ from $0$ to $c$, and combining pairs of same terms,
one has
\[
\begin{aligned}
& \sum_{a=0}^{c} {c \choose a} \prod_{i=1}^{a+1}(\theta k+(i-1)\theta)
	\prod_{j=1}^{c-a}(\tau^{(c)}+(j-1)\theta)\\
 & \qquad
 + {c \choose a} \prod_{i=1}^{a}(\theta k+(i-1)\theta)
 \prod_{j=1}^{c-a+1}(\tau^{(c)}+(j-1)\theta)\\
 & =\prod_{j=1}^{c+1}(\tau^{(c)}+(j-1)\theta)
 	+\prod_{i=1}^{c+1}(\theta k+(i-1)\theta)\\
 & \qquad
 +\sum_{a=0}^{c-1}\Big( {c \choose a} + {c \choose a+1} \Big)\prod_{i=1}^{a+1}(\theta k+(i-1)\theta)\prod_{j=1}^{c-a}(\tau^{(c)}+(j-1)\theta) \;.
\end{aligned}
\]
Therefore the identity holds by using ${c \choose a}+{c \choose a+1}={c+1 \choose a+1}$.
\end{proof}

\begin{thm} \label{thm:mom-formu}
For $n_1\geq n_2\geq \cdots \geq n_k$, one has the following moment formula
\[
u(t,\overrightarrow{n})
=\frac{1}{(2\pi\I)^{k}}
\int\cdots\int \prod_{1\leq A<B\leq k} \frac{z_{A}-z_{B}}{z_{A}-z_{B}-\theta}
\prod_{j=1}^{k}z_{j}^{1-n_{j}}(m_{1}+z_{j})^{t}
\frac{dz_{j}}{z_{j}}
\]
%with $c=\frac{m_{2}-m_{1}^{2}}{m_{1}}=\theta$
where the contour for $z_k$ is a small circle around the origin,
and the contour for $z_j$ contains the contour for $z_{j+1}+\theta$ for
all $j=1,...,k-1$, as well as the origin.
\end{thm}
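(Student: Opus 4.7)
The plan is to apply the preceding lemma: it suffices to show that the right-hand side of the formula, which I denote $\tilde u(t,\vec n)$, solves the free evolution equation \eqref{eq:free-ev}, satisfies the two-body boundary conditions at $n_i = n_{i+1}$, and agrees with the initial data $\tilde u(0,\vec n) = \mathbf{1}_{n_1=\cdots=n_k=1}$. Since \eqref{eq:true-ev} is an explicit recursion in $t$, uniqueness then pins down $\tilde u = u$.

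The free evolution is immediate: inside the integrand $\tau^{(j)}$ acts on $z_j^{1-n_j}$ as multiplication by $z_j$, so $\prod_j(m_1 + \tau^{(j)})$ acts as $\prod_j(m_1 + z_j)$, which advances $(m_1+z_j)^t$ to $(m_1+z_j)^{t+1}$. For the two-body boundary condition at $n_i = n_{i+1}$, using $m_1 = \theta k$ and $m_2 = \theta^2 k(k+1)$ the boundary operator acts on the integrand as
\[
m_1^2 - m_2 + m_1(z_i - z_{i+1}) \;=\; m_1(z_i - z_{i+1} - \theta),
\]
which cancels the factor $(z_i - z_{i+1} - \theta)^{-1}$ in the Vandermonde-type product. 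The resulting integrand is holomorphic in $z_i$ at $z_{i+1} + \theta$, so the $z_i$-contour can be contracted (without crossing any other pole, thanks to the nested structure of the contours) until it coincides with the $z_{i+1}$-contour. On this common contour the integrand is symmetric under $z_i \leftrightarrow z_{i+1}$, since $n_i = n_{i+1}$, apart from the surviving factor $(z_i - z_{i+1})$, which is antisymmetric; the double integral therefore vanishes.

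For the initial condition I plan to proceed by induction on $k$: evaluate the $z_k$-integral as the residue at $z_k = 0$ (the only pole inside its small contour around the origin), producing a Cauchy-type factor $\prod_{A<k}\frac{z_A}{z_A - \theta}$, then reduce to a similar integral in $k-1$ variables that now has additional poles at $z_A = \theta$. The mechanism is already transparent for $k=2$: only $(n_1,n_2)=(1,1)$ contributes, while for any $n_j \geq 2$ the residue at $z_j = 0$ is cancelled exactly by the residue at $z_j = \theta$. The main obstacle, and the least mechanical step, will be organizing these pairwise cancellations systematically for all $\vec n$; I expect the cleanest bookkeeping to come from combining the induction hypothesis (the $(k-1)$-variable formula returning $\mathbf{1}_{n_1=\cdots=n_{k-1}=1}$) with an elementary residue identity that pairs poles at $0$ against those at $\theta$ and vanishes whenever some $n_j \geq 2$.
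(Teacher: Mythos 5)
Your handling of the free evolution and of the two-body boundary condition matches the paper's proof exactly: the operator $\prod_j(m_1+\tau^{(j)})$ acts on the integrand by multiplication by $\prod_j(m_1+z_j)$, and the boundary operator at $n_i=n_{i+1}$ produces $m_1(z_i-z_{i+1}-\theta)$ which cancels the denominator factor, after which the surviving $(z_i-z_{i+1})$ numerator makes the integral vanish once the $z_i$ and $z_{i+1}$ contours are brought together (antisymmetry in the now-symmetric double integral). Those two steps are correct and complete.

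The gap is in the initial-condition step, which you yourself flag as ``the least mechanical.'' The inductive scheme you sketch (integrate $z_k$ at $0$, then track poles migrating to $z_A=\theta$, then argue a pairwise cancellation of residues at $0$ and at $\theta$ whenever some $n_j\ge 2$) is not carried through, and the proposed cancellation is not obviously correct as stated: after taking a residue the pole structure in the remaining variables shifts, the contours are nested rather than symmetric, and there is no clean identity pairing a residue at $z_j=0$ against one at $z_j=\theta$ for general $\vec n$. The paper sidesteps all of this bookkeeping with a much simpler observation. At $t=0$ the integrand is $\prod_{A<B}\frac{z_A-z_B}{z_A-z_B-\theta}\prod_j z_j^{-n_j}$. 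If $n_1>1$, the $z_1$-integrand decays like $|z_1|^{-n_1}$ with $n_1\ge 2$ as $|z_1|\to\infty$ (the cross-ratio factors tend to $1$), and since the outermost $z_1$ contour already encircles all finite poles, it can be pushed to infinity and the integral vanishes. Symmetrically, if $n_k<1$ there is no pole at $z_k=0$ inside the small innermost circle, so the $z_k$ integral vanishes. Since $n_1\ge\cdots\ge n_k$, the only nonzero case is $n_1=\cdots=n_k=1$, and then a direct chain of residues at $z_k=0$, $z_{k-1}=\theta$, $z_{k-2}=2\theta,\dots$ gives the value $1$. You should replace your inductive cancellation plan with this ``push $z_1$ to $\infty$ / no pole at $z_k=0$'' argument; it closes the proof without any delicate residue pairings.
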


The following picture illustrates the choices of contours
in the above integrals. The solid lines are contours for $z_j$ ($j=1,...,k$).
The smallest dashed contour is for $z_k+\theta$,
and is contained in the contour for $z_{k-1}$.
The slightly larger dashed contour is for $z_{k-1}+\theta$,
and is contained in the contour for $z_{k-2}$, etc.
These choices avoid the poles of the integrand.

\begin{center}
\begin{tikzpicture} [scale=1.5]
\draw [->] (-2,0)--(5,0);
\draw [->] (0,-2)--(0,2);
\draw (0,0) circle (0.2);
	\node at (0.2,0.2) {$z_k$};
\filldraw  (0.8, 0) circle (0.03cm) ;
	\node at (0.8,-0.15) {$\theta$};
\draw[dashed,gray] (0.8,0) circle (0.2);
\draw (0.5,0) ellipse (1 and 0.5);
	\node at (1.7,0.25) {$z_{k-1}$};
\draw [dashed,gray] (1.3,0) ellipse (1 and 0.5);
\draw (1.1,0) ellipse (2.1 and 0.8);
	\node at (3,0.6) {$z_{k-2}$};
\draw (2,0) ellipse (3.5 and 1.8);
	\node at (3,1.9) {$z_1$};
\node at (2,1.2) {$......$};
\end{tikzpicture}
\end{center}

\begin{proof}
Since
\[
(m_{1}+\tau^{(i)})\, z_{j}^{-n_{j}}=(m_{1}+z_{j})\, z_{j}^{-n_{j}} \;,
\]
we immediately obtain that $u(t,\overrightarrow{n})$ satisfies the free evolution
equation.

To show that the boundary conditions are satisfied, we apply $m_{1}^{2}-m_{2}+m_{1}(\tau^{(i)}-\tau^{(i+1)})$
to the right hand side (with $n_{i}=n_{i+1}$) yields a factor $m_{1}^{2}-m_{2}+m_{1}(z_{i}-z_{i+1})$
which cancels (up to $m_1$) the same factor $z_i - z_{i+1}-\theta=z_i - z_{i+1}-\frac{m_2-m_1^2}{m_1}$ in the denominator. Thus we can deform
the contours for $z_{i}$ and $z_{i+1}$ together, and the factor $z_{i}-z_{i+1}$
in the numerator shows that the integral is zero.

For the initial condition at $t=0$, observe that if $n_{1}>1$ there
is no pole at $z_{1}=\infty$ so the integral is zero; if $n_{k}<1$
there is no pole at $z_{k}=0$ so the integral is again zero. Since
$n_{1}\ge...\ge n_{k}$, for the integral to be nonzero one must have
$n_{1}=...=n_{k}=1$, in which case
\[
u(t,\overrightarrow{n})=
\frac{1}{(2\pi\I)^{k}}
\int\cdots\int \prod_{1\leq A<B\leq k} \frac{z_{A}-z_{B}}{z_{A}-z_{B}-\theta}
\frac{dz_{j}}{z_{j}} \;.
\]
We integrate $z_{k},...,z_{1}$ one by one. Using residue formula
at $z_{k}=0$, $z_{k-1}=\theta$, $z_{k-2}=2\theta$ etc. one obtains $u(t,\overrightarrow{n})=1$ for $t=0$ and $\overrightarrow n=(1,...,1)$.
\end{proof}

These moments may grow too quickly to recover the Laplace
transform of the polymer free energy. This is why we show the convergence
of geometric $q$-TASEP to our polymer model and apply the $e_q$-Laplace transform formula
for geometric $q$-TASEP in the previous sections.

Let us observe a $q$-deformation of the above moment
formula:
\begin{equation} \label{eq:geo-moments}
u^{q}(t,\overrightarrow{n})
=\frac{(-1)^k q^{\frac{k(k-1)}{2}}}{(2\pi \I)^k}
 \int\cdots\int \prod_{1\leq A<B\leq k} \frac{z_{A}-z_{B}}{z_{A}-qz_{B}}
\prod_{j=1}^{k}(1-z_{j})^{-n_{j}}
(1-\alpha z_{j})^{t}
\frac{dz_{j}}{z_{j}} \;.
\end{equation}
The contour of $z_i$ contains the contour of $q \,z_{i+1}$ and $1$.

\begin{center}
\begin{tikzpicture}  [scale=1.25]
\draw [->] (-2,0)--(5,0);
\draw [->] (0,-2)--(0,2);
\draw (3,0) circle (0.3);
	\node at (3.2,0.2) {$z_k$};  \filldraw  (3,0) circle (0.03cm) ;
			\node at (3,-0.15) {$1$};
\filldraw  (2.3, 0) circle (0.03cm) ;
	\node at (2.3,-0.2) {$q$};
%\draw[dashed,gray] (0.8,0) circle (0.2);
\draw (2.8,0) ellipse (0.7 and 0.6);
	\node at (1.9,0.3) {$z_{k-1}$};
\filldraw  (1.7, 0) circle (0.03cm) ;
	\node at (1.7,-0.2) {$q^2$};
%\draw [dashed,gray] (1.3,0) ellipse (1 and 0.5);
\draw (2.4,0) ellipse (1.2 and 1);
	\node at (1.3,0.8) {$z_{k-2}$};
\draw (1.5,0) ellipse (2.5 and 1.7);
	\node at (0.5,1.7) {$z_1$};
\node at (1,1.2) {$......$};
\end{tikzpicture}
\end{center}

In fact if we scale
\[
z_{j}=e^{-\eps\tilde{z}_{j}} \;,
\qquad q=e^{-\eps \theta} \;,
\qquad\alpha=e^{-\eps m_{1}} \;,
\]
then as $\eps\to0$,
\[
\frac{z_{A}-z_{B}}{z_{A}-qz_{B}}\to\frac{\tilde{z}_{A}-\tilde{z}_{B}}{\tilde{z}_{A}-\tilde{z}_{B}-\theta} \;,
\qquad
\eps^{n_j} (1-z_{j})^{-n_j} \to \tilde{z}_{j}^{-n_j} \;,
\]
\[
\frac{dz_j}{z_j} = -\eps \tilde z_j \frac{d\tilde z_j}{\tilde z_j} \;,
\qquad
\eps^{-t} (1-\alpha z_{j})^{t}\to (m_{1}+\tilde{z}_{j})^{t} \;,
\]
which shows that
\[
(\prod_{i=1}^{k} \eps^{n_{i}-1-t})\cdot u^{q}(t,\overrightarrow{n})\to u(t,\overrightarrow{n}) \;.
\]
Note that the formula (\ref{eq:geo-moments}) has appeared in \cite{discrete-time}(Theorem
2.1, case (2)), as describing moments for geometric $q$-TASEP (take $a_i=1$ and $\alpha_s=\alpha$ there):
\begin{equation} \label{eq:momthmeq}
\mathbb E \left[ \prod_{i=1}^{k} q^{X_{n_i}(t)+n_i} \right]
= u^{q}(t,\overrightarrow{n}).
\end{equation}
The scalings above are consistent with those of Theorem \ref{thm:rec-rel}.

\section{Stationary polymer} \label{sec:stat}

In this section we introduce a stationary version of the strict-weak polymer model.
Our notation is that  $\N=\{1,2,3,\dotsc\}$ and $\Z_+=\{0,1,2,\dotsc\}$.
For any $x=(t,n)\in\Z\times\Z$, we will sometimes write functions such as $Z(t,n)$
in the form $Z_x$ for simplicity of notation.
If $x$ and $y$ are nearest neighbor points both in $\Z\times\Z$,
then we denote by $(x,y)$ the edge between them.
We denote by $e_1,e_2$ the unit vectors
in the $t$ and $n$ directions, respectively.
%
%\bigskip
%
%Fix $0<\rho, b<\infty$ and let  $\{Y_x\}_{x\in\Z^2}$ be IID Gamma($\rho, b$) variables.  This means that their common distribution is  $\mu(dx)=b^\rho\Gamma(\rho)^{-1} x^{\rho-1}e^{-bx}dx$ on $0<x<\infty$.
%Weights $w_e$ are placed on two types of edges, horizontal $e=(x-e_1, x)$ and diagonal  $e=(x-e_1-e_2, x)$,  according to this definition:
%\be\label{t:w_e}  w_e=\begin{cases}  Y_x, &e=(x-e_1, x)\\ 1, &e=(x-e_1-e_2, x). \end{cases}\ee
%
%Admissible paths $\pi$ on $\Z^2$ use horizontal and diagonal edges.
%Provided $m-a\ge n-b$, when have the partition function
%\be\label{t:Z1} Z_{(a,b),(m,n)}=\sum_{\pi: (a,b)\to(m,n)}  \prod_{e\in\pi} w_e.  \ee
%
%

% The weights initially  given are  $\{U_{i,0}, V_{0,j}, Y_{i,j}: i,j\in\N\}$, all independent,  with $\{Y_{i,j}\}$ as before, and the following distributions on the boundary weights: \be\label{t:UV3}     \text{$U_{i,0}\sim$ Gamma($\alpha+\rho, b$),  \quad  $V_{0,j}^{-1}\sim$ Gamma($\alpha, b$).} \ee

\begin{defn}
The stationary process $\{Z_x^*=Z^*(t,n)\}_{x=(t, n)\in \Z_+\times \N}$ of partition functions
is parametrized by $0<\beta,k,\theta<\infty$ and
 defined as follows.   Admissible paths $\pi^*$ emanate from the point $(0,1)$ and are  allowed three types of edges:  horizontal edges $(x-e_1,x)$ or diagonal edges $(x-e_1-e_2,x)$ as in Definition~\ref{def:SWpoly}, as well as
%(i) horizontal edges  $e=(x-e_1, x)$, (ii)  diagonal edges $e=(x-e_1-e_2, x)$,  and (iii)
  vertical edges along the $y$-axis:  $e=((0,j), (0,j+1))$ for $j\in\N$.
The  weights on these edges are:
\be
\label{t:w_e4}
d^*_e=
%\begin{cases}
%	Y_x, &e = (x-e_1, x)\text{ for $x\in\N^2$}\\
%	1, &e=(x-e_1-e_2, x)\text{ for $x\in\N^2$} \\
%	\tau_e, &\text{if $e$ is on the boundary of  $\Z_+^2$.}
%%U_{i,0}, &\text{ for $e=((i-1,0),(i,0))$ with $i\in\N$}   \\ V_{0,j}, &\text{ for $e=((0,j-1),(0,j))$ with $j\in\N$.}
%\end{cases}
\begin{cases}
	\tau_e, &\text{if $e=\big((0,j), (0,j+1)\big)$  for some $j\in\N$
			or $e=\big((i,1), (i+1,1)\big)$ for some $i\in\Z_+$}  \\
	d_e,	 &\text{otherwise.}
\end{cases}
\ee
where $d_e$ is defined in Definition~\ref{def:SWpoly}, and
$\{\tau_{((i,1),(i+1,1))}\}_{i\in\Z_+}$ and  $\{\tau_{((0,j),(0,j+1))}\}_{j\in\N}$
are given edge weights on the boundary, and independent of $d_e$.
The distributions of these weights are
\be\label{t:t3}     \text{$\tau_{((i-1,1),(i,1))}\sim$ Gamma($\beta+k, \theta$),  \quad  $\tau_{((0,j-1),(0,j))}^{-1}\sim$ Gamma($\beta, \theta$).} \ee
To paraphrase, admissible paths use weights $d_e$ in the bulk ($t\geq 1$ and $n\geq 2$) and  weights $\tau_e$  on the boundary ($t=0$ or $n=1$). %$x$-axis and  the $y$-axis.
 The partition function  $Z^*(t,n)$ is then defined by
%\be\label{t:Zalpha}  Z^*_x =\sum_{\pi^*: (0,0)\to x} \; \prod_{e\in\pi^*} w^*_e\qquad \text{for $x\in\Z_+^2$.}
%\ee
\be \label{t:Zalpha}
Z^*(t,n)=\sum_{\pi^*:(0,1)\to(t,n)}\prod_{e\in\pi^*}d^*_{e}
\ee
\end{defn}

Note that we still have $Z^*(0,1)=1$, but $Z^*(0,n)\neq 0$ for $n>1$.    These partition functions still satisfy the same recursive relation as \eqref{eq:Recursive-Z}
for $t\geq 1$ and $n\geq 2$
\be\label{t:Z10}   Z^*_x  =  Y_x Z^*_{x-e_1} + Z^*_{x-e_1-e_2}
%\qquad \text{for $x\in\N$.}
\ee
where $Y_x := Y_{(x-e_1,x)}$ are i.i.d.\ Gamma$(k,\theta)$ random variables
as in the previous sections.
Superscript $^*$ is used to distinguish this partition function from Definition \ref{def:SWpoly}.
Extend the definition of the  variables  $\tau_e$
%$U_{i,0}$ and $V_{0,j}$
to the ``bulk'' by defining
\be\label{t:U3}    \tau_{(x,y)} =   \frac{Z^*_y}{Z^*_x} \qquad \text{for all directed nearest-neighbor edges $(x,y)$  with  $x,y\in\Z_+\times \N$.}
%U_x=\frac{Z^*_x}{Z^*_{x-e_1}} \quad\text{and}\quad V_x=\frac{Z^*_x}{Z^*_{x-e_2}} \qquad \text{for $x\in\N^2$}
\ee
This is true also on the boundary, by definition of $Z^*_x$.

  The edge weights $\tau_e$ for edges $e$ in the bulk of $\Z_+\times \N$  can also be defined inductively.  Begin with the given initial weights
%$\{\tau_{((i-1,0),(i,0))}, \tau_{((0,j-1),(0,j))},  Y_x \}_{i,j\in\N, \,x\in\N^2}$
\[
\{\tau_{((i-1,1),\,(i,1))}, \tau_{((0,j),\,(0,j+1))},  Y_x \}_{i,j\in\N, \,x\in\Z_+\times\N}
\]
and  apply repeatedly  the formulas
\be\label{t:ind7}   \tau_{(x-e_1,\,x)}= Y_x + \frac1{\tau_{(x-e_1-e_2,\, x-e_1)}},\quad
 \tau_{(x-e_2,\,x)}=\frac{ Y_x  \tau_{(x-e_1-e_2,\, x-e_1)}+1}{\tau_{(x-e_1-e_2,\, x-e_2)}}. \ee
 Using the recursive relation \eqref{t:Z10}
 one shows inductively that equations   \eqref{t:U3} and \eqref{t:ind7} are equivalent.

 \begin{proposition}
The distribution of the process $\big\{\tau_{(x,y)}: x\in\Z_+\times\N, y\in\{x+e_1, x+e_2\} \big\}$ is invariant under lattice shifts.   In particular, the distribution of the process   $\big\{\tau_{(a+x,\,a+y)}: x\in\Z_+\times\N, y\in\{x+e_1, x+e_2\} \big\}$ is the same for all $a\in\Z_+^2$.
 \end{proposition}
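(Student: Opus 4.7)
The plan is to reduce the proposition to iterated applications of a single-corner Burke identity for Gamma random variables, in the spirit of the stationary log-gamma polymer analysis in \cite{sepp-12-aop}. The cornerstone is the following: if $U, V, Y$ are independent with $U \sim \mbox{Gamma}(\beta, \theta)$, $V \sim \mbox{Gamma}(\beta+k, \theta)$, $Y \sim \mbox{Gamma}(k, \theta)$, then $\tilde V := Y+U$ and $\tilde U := UV/(Y+U)$ are independent with the same marginal laws as $V$ and $U$ respectively. This is standard beta--gamma algebra: the ratio $U/(Y+U) \sim \mbox{Beta}(\beta, k)$ is independent of $Y+U \sim \mbox{Gamma}(\beta+k, \theta)$, and the product of $V \sim \mbox{Gamma}(\beta+k, \theta)$ with an independent $\mbox{Beta}(\beta, k)$ is $\mbox{Gamma}(\beta, \theta)$.

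First I would establish shift invariance under $a = e_1$. Writing $\tilde V_j := \tau_{((0,j),(1,j))}$ (so that $\tilde V_1 = V_1$) and $\tilde U_j := \tau_{((1,j),(1,j+1))}^{-1}$, a short direct computation from \eqref{t:Z10} at the corner $(1, j+1)$ yields
\[
\tilde V_{j+1} = Y_{(1, j+1)} + U_j, \qquad \tilde U_j = U_j \tilde V_j/\tilde V_{j+1},
\]
which is exactly the Burke map applied to the inputs $(\tilde V_j, U_j, Y_{(1, j+1)})$. Inducting upward in $j$, and using that the fresh inputs $U_j$ and $Y_{(1, j+1)}$ at each step are independent of every previously generated variable, I would show that $\{\tilde U_j\}_{j \geq 1}$ are iid $\mbox{Gamma}(\beta, \theta)$, jointly independent of the unused inputs $\{V_i\}_{i \geq 2}$ and of all interior $Y$-weights with $t \geq 2$. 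Since the shifted boundary data consist of the inherited iid $\mbox{Gamma}(\beta+k, \theta)$ horizontal weights $\{\tau_{((i,1),(i+1,1))}\}_{i \geq 1}$ together with the newly produced iid inverse-$\mbox{Gamma}(\beta, \theta)$ vertical weights $\{\tilde U_j^{-1}\}_{j \geq 1}$, they have the same joint law as the original boundary data. The bulk $\tau$-weights of the shifted process are then reconstructed from these boundary variables and the remaining interior $Y$'s via \eqref{t:ind7}, which is the same recipe as for the unshifted process, so the shifted process agrees in law with the original.

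An entirely analogous computation at the corner $(i+1, 2)$ with $\bar U_i := \tau_{((i,1),(i,2))}^{-1}$ and base case $\bar U_0 = U_1$ reproduces the Burke map with inputs $(\bar U_i, V_{i+1}, Y_{(i+1, 2)})$ and outputs $\tau_{((i,2),(i+1,2))} = Y_{(i+1,2)} + \bar U_i$ and $\bar U_{i+1} = \bar U_i V_{i+1}/(Y_{(i+1, 2)} + \bar U_i)$; iterating horizontally in $i$ gives shift invariance under $e_2$. Composing with the $e_1$ invariance then produces invariance under arbitrary $a \in \mathbb{Z}_+^2$.

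The main technical obstacle is the bookkeeping in these inductions: at every step one must propagate not only the marginal distributions of the freshly produced boundary variables but also their full joint independence from both the Burke inputs of the next step and every variable that will later become part of the shifted process. This joint independence follows from the iid structure of the original inputs together with repeated use of the elementary fact that if $A$ is independent of $B$ and $C$ is a deterministic function of $B$ and some fresh randomness independent of $(A, B)$, then $C$ is independent of $A$; once this is formalized, the rest of the proof is bookkeeping.
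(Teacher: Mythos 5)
Your proof is correct and follows essentially the same route as the paper: a single-corner Burke-type identity for independent Gamma/inverse-Gamma edge weights, proved via beta--gamma algebra, and then propagated inductively along the lattice through the recursion \eqref{t:ind7}. The paper's lemma additionally records $Y'=UVY/(YV+1)$ in the joint statement that $(U',V',Y')$ is equal in distribution to $(U,V,Y)$ --- which you omit, as it is not needed for the proposition --- and your notation swaps the roles of $U$ and $V$ relative to the paper (you use $U\sim\mathrm{Gamma}(\beta,\theta)$ where the paper has $V^{-1}$); apart from this relabeling, and your more explicit bookkeeping of the induction and the reduction to the two elementary shifts $a=e_1$ and $a=e_2$, the arguments coincide.
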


 The stationarity is a consequence of the inductive definition \eqref{t:ind7} of the weights and the  next fact about gamma distributions.    In conjunction with \eqref{t:ind7} the next lemma is applied to   $(U,V,Y)=  ( \tau_{(x-e_1-e_2,\, x-e_2)} , \tau_{(x-e_1-e_2,\, x-e_1)}, Y_x)$ and
 $(U',V')= ( \tau_{(x-e_1,\,x)},   \tau_{(x-e_2,\,x)})$.    The statement for $Y'$ is included in the lemma for the sake of completeness but not needed for our present purposes.

 \begin{lem}  Fix $0<\beta, k,\theta<\infty$.   Let $(U,V,Y)$ be independent random variables with distributions
 \be\label{t:t9}
 \text{\rm $U\sim$ Gamma($\beta+k, \theta$),  \ \
 $V^{-1}\sim$ Gamma($\beta, \theta$), \  and  $Y\sim$ Gamma($k, \theta$).} \ee
 Define $(U',V',Y')$ by
\be\label{t:cell1}   U'=Y+\frac1V, \quad V'=\frac{YV+1}U, \quad Y'= \frac{UVY}{YV+1}. \ee
Then the vectors  $(U',V',Y')$ and   $(U,V,Y)$ are equal in distribution.
 \end{lem}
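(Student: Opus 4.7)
The plan is to reduce this to the classical beta--gamma algebra identities. Recall the two facts we will use repeatedly: (i) if $A\sim$ Gamma$(a,\theta)$ and $B\sim$ Gamma$(b,\theta)$ are independent, then $A+B\sim$ Gamma$(a+b,\theta)$ and $A/(A+B)\sim$ Beta$(a,b)$, with the sum and the ratio independent; (ii) conversely, if $S\sim$ Gamma$(a+b,\theta)$ and $R\sim$ Beta$(a,b)$ are independent, then $SR$ and $S(1-R)$ are independent Gamma$(a,\theta)$ and Gamma$(b,\theta)$.

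First I would change variables from $V$ to $W:=V^{-1}\sim$ Gamma$(\beta,\theta)$, so that $(U,W,Y)$ are three independent gammas with the same scale parameter $\theta$. The definitions then give
\[
U'=Y+W,\qquad YV+1 = V(Y+1/V)=VU',\qquad V'=\frac{VU'}{U},\qquad Y'=\frac{UVY}{YV+1}.
\]
Introduce $B:=Y/(Y+W)=YV/(YV+1)$. By fact (i) applied to $(Y,W)$, one has $U'\sim$ Gamma$(\beta+k,\theta)$, $B\sim$ Beta$(k,\beta)$, and $U'\perp B$. Since $W=U'(1-B)$ and $Y=U'B$, the relation $V=1/W$ yields $V'=1/(U(1-B))$, so that
\[
\frac{1}{V'}=U(1-B),\qquad Y'=UB.
\]

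Next I would extract the joint law of $(U',1/V',Y')$. By construction $U$ is independent of $(Y,W)$, hence independent of the pair $(U',B)$. Combined with $U'\perp B$, the three variables $U$, $U'$, $B$ are mutually independent. Applying fact (ii) to $U\sim$ Gamma$(\beta+k,\theta)$ and $B\sim$ Beta$(k,\beta)$ shows that $UB$ and $U(1-B)$ are independent with $UB\sim$ Gamma$(k,\theta)$ and $U(1-B)\sim$ Gamma$(\beta,\theta)$. Moreover $U'$ is a function of $(Y,W)$ alone, so it is independent of the pair $(U,B)$, hence independent of $(UB,U(1-B))=(Y',1/V')$.

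Collecting these facts: $U'\sim$ Gamma$(\beta+k,\theta)$, $(V')^{-1}\sim$ Gamma$(\beta,\theta)$, $Y'\sim$ Gamma$(k,\theta)$, and the three are mutually independent. This matches the law of $(U,V,Y)$ in \eqref{t:t9} and completes the proof. I do not anticipate a real obstacle; the only delicate point is bookkeeping the independence structure, which the decomposition through $(U,U',B)$ handles cleanly.
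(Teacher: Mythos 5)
Your proof is correct and follows essentially the same route as the paper: rewrite everything in terms of $W=V^{-1}$, apply the first beta--gamma fact to $(Y,W)$ to produce $U'$ and the Beta variable $B$, then apply the second fact to $U$ and $B$ to obtain $Y'=UB$ and $(V')^{-1}=U(1-B)$. The only difference is that you spell out the independence bookkeeping via the triple $(U,U',B)$ more explicitly than the paper does.
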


 \begin{proof}   Rewrite the formulas as
 \be\label{t:cell2}   U'=Y+V^{-1}, \quad (V')^{-1}=U\cdot \frac{V^{-1}}{Y+V^{-1}}, \quad Y'= U\cdot\frac{Y}{Y+V^{-1}}. \ee
 The lemma follows from  two basic facts about the beta-gamma algebra.
 First, if $X\sim$ Gamma($\mu,\theta$) and  $Y\sim$ Gamma($\nu,\theta$) are independent, then  $X+Y$ is independent of the pair $( \frac{X}{X+Y}, \frac{Y}{X+Y})$, and
 \[   X+Y\sim \text{Gamma}(\mu+\nu, \theta), \ \ \frac{X}{X+Y}\sim \text{Beta}(\mu,  \nu), \ \  \text{and} \ \   \frac{Y}{X+Y}\sim \text{Beta}(\nu,  \mu).   \]
 Second,  if  $X\sim$ Gamma($\mu+\nu,\theta$) and $Z\sim$  {Beta}($\mu,  \nu$)  are independent,  then $ZX$ and $(1-Z)X$ are independent with distributions    $ZX\sim$ Gamma($\mu,\theta$) and  $(1-Z)X\sim$ Gamma($\nu,\theta$).
 \end{proof}

\section{Free energy law of large numbers} \label{sec:free-energy}
It is a consequence of Theorem~\ref{thm:main}, that for %$\theta=1$ and 
$\kappa$ large enough,
the law of large numbers limit for the free energy of the strict-weak polymer model
is given by $\bar f_{k,\theta,\kappa}$  % $\bar f_{k,\kappa}$ 
of Definition~\ref{def:SWpoly}.
In this section we explain another approach
%assuming $\theta=1$ 
to identify (and with a little more work, prove) the free energy law of large numbers
\be\label{t:g1}  g^*(t,n)=\lim_{N\to\infty} N^{-1}
	\log Z^*(\fl{Nt},\fl{Nn}), \quad 0<t,n<\infty,
\ee
and
 \be\label{t:g2}  g(t,n)=\lim_{N\to\infty} N^{-1}
 	\log Z_{(0,1)}(\fl{Nt},\fl{Nn}), \quad 0<n\le t<\infty.
\ee
where in general the subscript $x$ in $Z_x(t,n)$ stands for the partition function
of polymers emanating from $x$.

Evaluating $g^*$ is immediate from the law of large numbers. By following ratios \eqref{t:U3} from $(0,1)$ to $(\fl{Nt},\fl{Nn})$,
\begin{align*}
N^{-1}\log Z^*\big(\fl{Nt},\fl{Nn}\big) \;
&= \; N^{-1}\sum_{i=1}^{\fl{Nt}} \log \tau_{((i-1,1),(i,1))}
	\;+\;   N^{-1}\sum_{j=1}^{\fl{Nn}} \log \tau_{((\fl{Nt},\,j-1),(\fl{Nt},\,j))}  \\
 &\longrightarrow t\Psi(k+\beta)-n\Psi(\beta) 
 +(t-n) \log\theta \;,
\end{align*}
where we have used the fact that if $X$ is $\mbox{Gamma}(k,\theta)$ distributed then
$\mathbb{E}(\log X)=\Psi(k)+\log(\theta)$. % and the assumption $\theta=1$.
The two sums are sums of i.i.d.\ random variables, though the sums themselves are correlated with each other.

To compute  $g$, the starting point is the decomposition
\be\label{t:decamp}
%\begin{aligned}
Z^*( \fl{Nt},\fl{Nn} ) \;
= \!\!\!\!  \sum_{k=2}^{\fl{Nn}-\fl{Nt}+1}  \!\!\!\!\!\!
	Z^*(k-1,1)  \,Z_{(k,2)}(\fl{Nt},\fl{Nn})
%&\qquad\qquad\qquad
 \;+\;   \sum_{\ell=1}^{\fl{Nn}} Z^*(0,\ell) \,  Z_{(0,\ell)}(\fl{Nt},\fl{Nn}).
%\end{aligned}
\ee
To be specific, the boundary $Z$-values in the decomposition are simply the products
\[   Z^*(k,1)=\prod_{i=1}^k \tau_{((i-1,1), \,(i,1))} \quad\text{and}\quad
Z^*(0,\ell)=\prod_{j=2}^\ell \tau_{((0,j-1), \,(0,j))}. \]

Take $t=n=1$ in which case the first sum on the right vanishes. $N^{-1}\log$ and limit as $N\to\infty$ convert sums into maximums.  Scale the summation index as $\ell=\fl{Ns}$ to arrive at the following equation:
\begin{align*}
\Psi(k+\beta)-\Psi(\beta)&=\lim_{N\to\infty} N^{-1}\log Z^*(N,N) \\
&=\lim_{N\to\infty} N^{-1}\log
	\sum_{\ell=1}^{N}  Z^*(0,\ell) \,  Z_{(0,\ell)}(N,N)  \\
&=\sup_{0\le s\le 1} \{ -s\Psi(\beta) -s\log\theta + g(1,1-s) \}
\end{align*}
The estimation needed for making these steps rigorous is left to the reader.
Let $t=1-s$ and change variables to $y=\Psi(\beta)+\log\theta$ to get
\[  
\Psi(k+\Psi^{-1}(y-\log\theta)) +\log\theta = \sup_{0\le t\le 1}\{ ty + g(1,t)\}. 
\]
Extend the convex  function $f(t)=-g(1,t)$ to $\R$ by setting $f(t)=\infty$ for $t\notin[0,1]$.  Rewrite the equation above as
\[  \Psi(k+\Psi^{-1}(y-\log\theta)) +\log\theta  = \sup_{t\in\R}\{ ty   -f(t)\}. \]
This extended $f$ is convex and lower semicontinuous, and hence by convex duality, for $0\le t\le 1$,
\be\label{t:60}   
g(1,t)=\inf_{y\in\R}\{ -ty+ \Psi(k+\Psi^{-1}(y-\log\theta))\} +\log\theta
= \inf_{\beta>0}\{-t\Psi(\beta)+ \Psi(k+\beta)\} +(1-t)\log\theta.
\ee
Limit \eqref{t:g2} implies homogeneity $g(t,1)=tg(1,t^{-1})$ for $t\ge 1$, and consequently we also have
\be\label{t:60-1}   g(\kappa, 1)=\kappa g(1,\kappa^{-1}) = \inf_{\beta>0}\{-\Psi(\beta)+ \kappa\Psi(k+\beta)\} +(\kappa-1)\log\theta, 
\quad 1\le \kappa <\infty.
\ee
Note that $g(\kappa,1)$ is equal to  %$\bar{f}_{k,\kappa}$ defined in Definition~\ref{def:parameters} 
$\bar{f}_{k,\theta,\kappa}$ defined in Definition~\ref{def:parameters}  %with $\theta=1$
since $\bar t$ is defined to be the critical value of $\beta$
where the infimum is attained.

\bibliographystyle{plain}
\bibliography{ref}

\end{document}